\documentclass[11pt,,leqno,twoside]{article} %12.3.2012, pekka
\usepackage{bbm}
\usepackage{mathrsfs}
\usepackage{amssymb}
\usepackage{amsmath}
\usepackage{amsthm}
\usepackage{amsfonts}
\usepackage{color}
\usepackage{graphicx}
\usepackage[active]{srcltx}
\usepackage{CJK}

\usepackage[cp1252]{inputenc}

\usepackage{mathrsfs}
\usepackage{graphicx}

\usepackage[active]{srcltx}

\allowdisplaybreaks

\usepackage{titletoc}
\titlecontents{section}[0pt]{\addvspace{2pt}\filright}
              {\contentspush{\thecontentslabel\ }}
              {}{\titlerule*[8pt]{.}\contentspage}

\pagestyle{myheadings}\markboth{Q. Miao, C. Wang,  and Y. Zhou}  %@@!!@@!!
{Uniqueness of absolute minimizers} %@@!!@@!!

\textwidth=15cm
\textheight=21cm
\oddsidemargin 0.45cm
\evensidemargin 0.45cm

\parindent=13pt

\def\rr{{\mathbb R}}
\def\rn{{{\rr}^n}}

\def\cf{{\mathcal F}}

\def\cl{{\mathcal L}}

\def\cp{{\mathcal P}}

\def\fz{\infty}
\def\az{\alpha}

\def\dist{{\mathop\mathrm{\,dist\,}}}
\def\loc{{\mathop\mathrm{\,loc\,}}}
\def\weak{{\mathop\mathrm{ \,weak\,}}}
\def\lip{{\mathop\mathrm{\,Lip}}}

\def\lz{\lambda}
\def\dz{\delta}

\def\ez{\epsilon}

\def\bz{\beta}

\def\gz{{\gamma}}

\def\sz{\sigma}

\def\wz{\widetilde}

\def\osc{{\textrm{osc}}}

\def\bint{{\ifinner\rlap{\bf\kern.35em--}
\int\else\rlap{\bf\kern.45em--}\int\fi}\ignorespaces}

\def\bbint{{\ifinner\rlap{\bf\kern.35em--}
\hspace{0.078cm}\int\else\rlap{\bf\kern.45em--}\int\fi}\ignorespaces}

\def\osc{ \mathop \mathrm{\, osc\,} }

\def\diam{{\mathop\mathrm{\,diam\,}}}

\def\usc{\mathop\mathrm{\,USC\,}}
\def\lsc{\mathop\mathrm{\,LSC\,}}
\def\cica{\mathop\mathrm{\,CICA\,}}
\def\cicb{\mathop\mathrm{\,CICB\,}}

\def\r{\right}
\def\lf{\left}

\newtheorem{thm}{Theorem}[section]
\newtheorem{lem}[thm]{Lemma}%[section]     %@@!!@@!!
%[section]    %@@!!@@!!
\newtheorem{rem}[thm]{Remark}%[section]    %@@!!@@!!
\newtheorem{cor}[thm]{Corollary}%[section]    %@@!!@@!!
\newtheorem{defn}[thm]{Definition}%[section]    %@@!!@@!!
%[section]    %@@!!@@!!
\numberwithin{equation}{section}

\begin{document}
%\begin{CJK*}{GBK}{song}

\arraycolsep=1pt

\title{\Large\bf
Uniqueness of absolute minimizers
for $L^\fz$-functionals  involving Hamiltonians $H(x,p)$
%\footnotetext{\hspace{-0.35cm}
%%\noindent{2000 {\it Mathematics Subject Classification:}}
%\noindent  {\it Key words and phases:}  Dirichlet form, Diffusion process, intrinsic distance,
% differential structure, $L^\fz$-variational problem, absolute minimizer
%\endgraf Pekka Koskela was supported by the Academy of Finland
%grant 131477. Nageswari Shanmugalingam was partially supported by grant \#200474 from
%the Simons Foundation and by NSF grant \# DMS-1200915.
%Yuan Zhou was supported by
%Program for New Century Excellent Talents in University of Ministry of Education of China
%and National Natural Science Foundation of China (Grant No. 11201015).
%%\endgraf $^\ast$ Corresponding author.
%}
}
\author{Qianyun Miao, Changyou Wang, and Yuan Zhou
}
\date{ }
\maketitle

\begin{center}
\begin{minipage}{13.5cm}\small
{\noindent{\bf Abstract.}\quad
For a bounded domain $U\subset\rn$, consider the
 $L^\fz$-functional  involving a nonnegative Hamilton function $H:\overline U\times\rn\to [0,\fz)$.
In this paper, we will establish  the uniqueness of  absolute minimizers $u\in W^{1,\fz}_\loc(U)\cap C(\overline U)$
for $H$,
under the  Dirichlet boundary value $g\in C(\partial U)$, provided

\noindent (A1) $H$ is lower semicontinuous in $\overline U\times\rn$, and $H(x,\cdot)$ is convex for any $x\in\overline U$.

\noindent (A2)  $\displaystyle H(x,0)=\min_{p\in \rn}H(x,p)=0$  for any $  x\in \overline U$,
and $\displaystyle\bigcup_{x\in \overline U}\big\{p: H(x,p)=0\big\}$  is contained in a hyperplane of $\rn$.

\noindent (A3) For any $\lz>0$, there exist  $\displaystyle 0<r_\lz\le R_\lz<\fz$, with $\displaystyle\lim_{\lz\to\fz}r_\lz=\fz$,
such that
$$B(0,r_\lz)\subset  \Big\{p\in\rn\ |\  H(x,p)< \lz\Big\}\subset B(0,R_\lz)\ \forall\  \lz> 0\ \mbox{and}\ x\in \overline U.$$
This generalizes the uniqueness theorem by \cite{j93, jwy, acjs} and \cite{ksz}
to a large class of Hamiltonian functions $H(x,p)$ with $x$-dependence.
As a corollary, we confirm an open question on the uniqueness
of absolute minimizers posed by {\cite{jwy}}.
The proofs rely on geometric structure of the action function $\mathcal L_t(x,y)$ induced by $H$, and the
identification of the absolute subminimality of $u$ with  convexity
of the Hamilton-Jacobi flow $t\mapsto T^tu(x)$.}
\end{minipage}
\end{center}

\tableofcontents
\contentsline{section}{\numberline{ } References}{51}

\section{Introduction}\label{s1}

The  study of calculus of variations in $L^\fz$, initiated by Aronsson \cite{a1,a2,a3,a4} in 1960's,
has attracted great attentions by analysts in the past decades, see \cite{j93,bjw,ceg,acj,c01, CGW, jwy,y06,y07,c2008,wy11,s05,es08, BEJ, es11a,es11b,acjs,cf,swz, as, as1} and the references therein.
For a bounded domain $ U\subset\rn$  and a Hamiltonian function $H:\overline U \times \rn\to\mathbb R_+$,
the $L^\fz$-functional is defined by
$$\cf_\fz(u,U):=\big\|H(x, Du(x))\big\|_{L^\fz(U)}, \ u\in W_\loc^{1,\fz}(U).$$
From the studies on the model case $H(x,  p)=|p|^2$ (see \cite{a1,a2,a3,a4,j93,ce,ceg}),
it becomes clear that absolute minimizer is the correct notion of minimizer
for the minimization of $\cf_\fz(u,U)$
  over $W^{1,\fz}_\loc(U)$.
A function $u\in W^{1,\fz}_\loc(U)$  is said to be an absolute minimizer
 for $H$ in $U$,
 if for all open set $V\Subset U$, it holds that
$$\cf_\fz(u,V)\le \cf_\fz(v,V),  \ \ \mbox{whenever}\ v\in W^{1,\fz}(V)\cap C(\overline V)\ \mbox{and} \ u\big|_{\partial V}
=v\big|_{\partial V}.$$
To simplify the presentation, an absolute minimizer always refers to an absolute minimizer for $H$ in $U$,
unless there is a confusion.
%We remark that in the model case $H(x,  p)=|p|^2$, an absolute minimizer $u\in W^{1,\fz}_\loc(U)\cap C(\overline U)$  is
%a minimizer of  $\cf$ in $U$, that is,
% $$\cf(u,U)\le \cf(v,U)\quad \mbox{whenever}\ v\in W_\loc^{1,\fz}(U)\cap C(\overline U)\ \mbox{and} \ u|_{\partial U}=v|%_{\partial U};$$
% but the converse is not necessarily correct.
 %Moreover, given arbitrary $g\in\lip(\partial U)$, an absolute minimizer admitting boundary $g$, with respect to $|p|^2$ in $U$,
%  is exactly an absolute minimizing Lipschitz extension of $g$.

%Among several To understand the existence and uniqueness of absolute minimizers in $U$ admitting any given boundary value $g\in C(\partial U)$

%
%   with $u|_{\partial U}=g$,
%for  arbitrarily given $g\in C(\partial U)$.
%
% the absolute minimizer plays a correct notion of minimizers for such $L^\fz$-functional.

The existence of absolute minimizers with a given boundary value  has been extensively studied,
see  \cite{bjw,cpp} and the references therein.
In particular, for any $g\in C(\partial U)$, Barron, Jensen and  Wang \cite{bjw}
have obtained the existence of   absolute minimizers with  boundary value $g$,
under the natural assumption that $H(x,p)$ is lower semicontinuous in $\overline U\times\rn$, and
$H(x,\cdot)$ is quasi-convex  for all $x\in\overline U$, that is, the set
$\{p\in\rn | H(x,p)\le\lz\}$ is convex for $\lz\in\mathbb R$ and $x\in\overline U$.

The issue of uniqueness of  absolute minimizers with a given boundary value is much more subtle. It has
been established when the Hamilton function $H(x,p)$ takes the forms $|p|^2$, $H(p)$, or some special  type
$H(x,p)$, see \cite{j93,acj,jwy,acjs,ksz}.
It is a challenging problem to study the uniqueness of absolute minimizers
for general Hamiltonian functions $H(x,p)$.

When $H(x,p)=|p|^2$, Jensen  obtained in a seminal paper \cite{j93} the uniqueness of absolute minimizers with any given boundary value $g\in C(\partial U)$ by  identifying   absolute minimizer
with  $\fz$-harmonic function, that is, viscosity solution of the $\fz$-Laplacian equation:
\begin{equation}\label{e1.x0}
\Delta_\fz u:=\sum_{i,j=1}^n u_{x_i} u_{x_j} u_{x_ix_j} =0\quad {\rm in}\quad  U.
\end{equation}
The comparison principle is shown by \cite{j93}
for  absolute minimizers $u,v\in  W_\loc^{1,\fz}(U)\cap C(\overline U)$:
\begin{equation}\label{e1.x2}
\max_{x\in U}\big(u(x)-v(x)\big)= \max_{x\in\partial U}\big(u(x)-v(x)\big).
\end{equation}
Alternative proofs of \eqref{e1.x2}  were later found by Peres et al \cite{pssw}, and Armstrong and Smart \cite{as, as1}.
It is readily seen that the uniqueness of absolute minimizers follows from \eqref{e1.x2}.

%The first one is given by Peres et al \cite{pssw} based on  a stochastic game, that is, tug-of-war;
%while another one   by Armstrong-Smart \cite{as}
%based on the equivalence of absolute minimizing with the comparison property with cones of \cite{ceg} and with the convexity of the map $r\to S^ru(x)$, where
%$$S^ru(x)=\sup_{y\in B(x,r)}\frac{u(x)-u(y)}{r}.$$

For general Hamiltonians $H(p)$ with no $x$-dependence,  the comparison principle \eqref{e1.x2} for absolute minimizers was first established  in \cite{jwy} for $H\in C^2$
and later by \cite{acjs} for $H\in C^0$, under the assumption that
$H$ is convex, coercive (i.e., $\displaystyle\lim_{|p|\to\fz}H(p)=\fz$), and
$\displaystyle\big\{p\in\rn\ |\ H(p) = \min_\rn H\big\}$  has no interior points.
The approach by \cite{jwy} is based on the identification  of absolute minimizers with  viscosity solutions of
the  Aronsson equation:
\begin{equation}
\mathcal A_\infty[u]:= \sum_{i,j=1}^n  H_{p_i} (Du)  H_{ p_j} (Du)  u_{x_ix_j}=0 \quad {\rm in}\quad  U.
\end{equation}
While the approach by Armstrong, Crandall, Julin and Smart \cite{acjs} utilizes convexity
of the Hamilton-Jacobi flow, first proven by \cite{js} for $H\in C^2$, and some new ideas by \cite{as}.
Without  assuming $H\in C^2( \rn)$,  the approach based on the Aronsson equation from \cite{j93,jwy} does not seem
to work.
Moreover, as pointed out by \cite{jwy} and \cite{acjs}, it is crucial for the uniqueness
that $\{p\in\rn\ |\ H(p) = 0\}$  has no interior points.
We should point out that when $H(p)$ is some norm on $\rn$, the uniqueness
has  been established by \cite{CGW} and \cite{acj}.

The uniqueness of absolute minimizers for general Hamiltonian functions $H(x,p)$ with $x$-dependence may
fail: non-uniqueness of absolute minimizers was constructed by \cite{jwy} and \cite{y07}
when $H(x,p)=|p|^2+w(x)$, and $w$ satisfies that
 $\{x\in U\ | \  w(x) =\max_{\overline U} w\}\not=\emptyset$ consists of  finitely many points.
In general, when $H\in C^2(\rn\times\rn)$, $\displaystyle\lim_{|p|\to\fz}H(x,p)=\fz$ uniformly
in $x$, and $H(x,\cdot)$ is convex for $x\in\rn$,
set
$$c_0:=\inf_{\phi\in C^1( U)\cap C(\overline U)}\sup_{x\in U}H(x,D\phi(x)),$$
it was shown by \cite{jwy} that for $g\in C(\partial U)$, there is a 1-1 map from the set of viscosity solutions to
the Hamilton-Jacobi equation:
\begin{equation}  \label{e1.x3}
H(x,Du(x))=c_0\ \ {\rm in} \ \  U, \quad u\big |_{\partial U}=g,
\end{equation}
 to the set of absolute minimizers for $H$ in $U$. While equation \eqref{e1.x3} can have
multiple solutions for certain Hamilton functions $H(x,p)$ and  $g\in C(\partial U)$ (see Lions \cite{l}).

% the Hamilton-Jacobi equation
% \begin{equation}\label{e1.x3} H(z,Du(z))=c_0\quad {\rm in} \quad  U;\quad u|_{\partial U}=g\end{equation}
%  may have
 %Notice that in this case, by \cite{bjw},  an absolute minimizer is exactly a   viscosity solutions to
%   the Aronsson equation
%  \begin{equation}\label{e1.x4}\ca[u]=\sum_{i,j =1}^n  H_{p_i} (x,Du )   H_{ p_j} (x,Du ) u_{x_ix_j}+\sum_{i  =1}^n   H_{ p_i} (x,Du )  H_{ x_i} (x,Du )=0\quad {\rm in}\quad  U.\end{equation}

In order to obtain uniqueness of absolute minimizers, with any boundary value $g\in C(\partial U)$,
for general Hamiltonian functions $H(x,p)$,  additional assumptions on $H(x,p)$ to rule out the
possibility of multiple solutions to \eqref{e1.x3} seem to be necessary.
The  following conjecture was proposed by \cite{jwy}:

\smallskip
\noindent {\it Assume
that $H\in C^2(U\times\rn)\cap C(\overline U\times\rn)$ satisfies\\
 (1) $H(x,\cdot)$ is convex for any $x\in\overline U$, and \\
 (2) $0=H(x,0)<H(x,p)$ for any $0\ne p\in\rn$ and $x\in\overline U$.\\
Then, for any $g\in C(\partial U)$, there is  a unique absolute minimizer $u$ for $H$, with $u\big|_{\partial U}=g$.}

  %a unique
%  viscosity solution to  the Aronsson  equation
%  \begin{equation}\lf\{\label{e1.x1} \begin{array}{rl}
% \ca[u] =0&\quad {\rm in}\quad  U\\
%u=g &\quad {\rm on}\quad  \partial U.
%\end{array}\r.
%\end{equation}
\medskip

In this paper, we are able to confirm this conjecture. In fact, we will establish the uniqueness
of absolute minimizers for a large class of Hamiltonian functions $H(x,p)$.

We will consider a class of nonnegative Hamiltonian functions $H:\overline U\times\rn\to\mathbb R_+$ satisfying:

\noindent (A1) $H$ is lower semicontinuous on $\overline U\times\rn$, and $H(x,\cdot)$ is convex for all
$x\in\overline U$.

\noindent (A2) $\displaystyle H(x,0)=\min_{ p\in\rn}H(x,p)=0$  for every $  x\in \overline U$,
and $\displaystyle\bigcup_{x\in \overline U}\big\{p\in\rn\ |\ H(x,p)=0\big\}$ is contained in a hyperplane $P\subset\rn$.

\noindent (A3) There exist  $0<r_\lz\le R_\lz<\fz$, with $\displaystyle\lim_{\lz\to\fz}r_\lz=\fz$,
such that
$$B(0,r_\lz)\subset  \Big\{p\in\rn\ |\ H(x,p)< \lz\Big\}\subset B(0,R_\lz)
\ \ \mbox{for}\ \lz> 0 \ \mbox{and}\ x\in \overline U.$$

Now we state our main theorem.
\begin{thm}\label{t1.x2}
Assume that $H:\overline U\times\rn\to\mathbb R_+$ satisfies (A1), (A2), and  (A3).
Then \eqref{e1.x2} holds for every pair of absolute subminimizer $u\in W^{1,\fz}_\loc(U)\cap C(\overline U)$
and absolute superminimizer $v\in W^{1,\fz}_\loc(U)\cap C(\overline U)$.
Thus, for any $g\in C(\partial U)$,  there exists a unique absolute minimizer $u\in W^{1,\fz}_\loc(U)\cap C(\overline U)$,
with $u=g$ on $\partial U$. If, in addition, $H\in C^2(U\times\mathbb R^n)\cap C(\overline U\times\mathbb R^n)$,
then there exists a unique viscosity solution $u\in C(\overline U)$ to Aronsson's equation:
\begin{equation} \label{AE1}
\sum_{i=1}^nD_{x_i}(H(x,Du))\cdot H_{p_i}(x, Du)=0 \ {\rm{in}}\ U; \ u=g \ {\rm{on}}\ \partial U.
\end{equation}
\end{thm}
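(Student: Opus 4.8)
The plan is to reduce everything to the comparison principle \eqref{e1.x2} for an absolute subminimizer $u$ against an absolute superminimizer $v$; the two remaining assertions (existence–uniqueness of the absolute minimizer and, under $C^2$ regularity, uniqueness for Aronsson's equation \eqref{AE1}) follow from this by standard arguments. The core of the argument I would carry out is a Perron-type / Hamilton–Jacobi-flow analysis. First I would introduce, for $t>0$, the action (cost) function $\cl_t(x,y)$ generated by $H$ via the convex-duality formula $\cl_t(x,y)=\inf\big\{\int_0^t H^*(\xi(s),\dot\xi(s))\,ds : \xi(0)=x,\ \xi(t)=y\big\}$ where $H^*(x,\cdot)$ is the Legendre transform of $H(x,\cdot)$, together with the associated Lax–Oleinik / Hamilton–Jacobi semigroups $T^tu(x)=\inf_y\{u(y)+\cl_t(x,y)\}$ and its upper analogue. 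Assumptions (A1)–(A3) are exactly what is needed to make $\cl_t$ well-behaved: (A1) gives lower semicontinuity and convexity so that $H^{**}=H$ and the duality is available; (A3) (coercivity of $H(x,\cdot)$ uniformly in $x$, with $r_\lz\to\infty$) forces $\cl_t(x,y)<\infty$ for all $x,y$ and controls its growth, giving equi-Lipschitz / equicontinuity estimates on $T^tu$; and (A2) — the zero set $\{H(x,\cdot)=0\}$ being nonempty and contained in a fixed hyperplane $P$ — is the crucial non-degeneracy condition that rules out the fat null sets responsible for the counterexamples of \cite{jwy,y07}, and it translates into the statement that $\cl_t(x,y)=0$ only when $y-x$ lies (asymptotically as $t\to\infty$) in the direction of $P$, so that the flow genuinely \emph{moves}.

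The key structural step, which I expect the earlier sections of the paper to establish, is the identification: \emph{$u$ is an absolute subminimizer iff $t\mapsto T^tu(x)$ is convex in $t$ for every $x$ (equivalently the Hamilton–Jacobi flow does not decrease the Aronsson functional), and dually for supminimizers.} Granting this, the comparison principle is proved as follows. Fix absolute subminimizer $u$ and superminimizer $v$ with $u\le v$ on $\partial U$; suppose for contradiction that $m:=\max_{\overline U}(u-v)>\max_{\partial U}(u-v)$, so the maximum is attained at an interior point $x_0$. Using convexity of $t\mapsto T^tu(x_0)$ and concavity of $t\mapsto \bar T^tv(x_0)$, together with the semigroup property and the fact that $T^0=\mathrm{id}$, one derives a differential inequality along the flow at $x_0$; comparing the forward-in-$t$ behaviour of $T^tu(x_0)-\bar T^tv(x_0)$ at an interior maximum against its boundary behaviour, and invoking that by (A2) the optimal trajectories emanating from $x_0$ reach $\partial U$ in finite time while the zero-cost directions are confined to the hyperplane $P$, yields that $u(x_0)-v(x_0)$ is dominated by values of $u-v$ on $\partial U$ (this is the place where a McShane–Whitney / cone-type comparison using $\cl_t$ as an intrinsic metric-like quantity is run). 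This contradicts $m>\max_{\partial U}(u-v)$ and proves \eqref{e1.x2}.

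From \eqref{e1.x2} the rest is routine. For existence, Barron–Jensen–Wang \cite{bjw} (applicable since (A1) implies in particular quasiconvexity and lower semicontinuity) produce an absolute minimizer $u$ with $u|_{\partial U}=g$; if $u_1,u_2$ are two such, apply \eqref{e1.x2} with $(u,v)=(u_1,u_2)$ and then $(u_2,u_1)$ — each is simultaneously an absolute sub- and superminimizer — to get $u_1\le u_2$ and $u_2\le u_1$ on $\overline U$, hence $u_1=u_2$. For the last assertion, when $H\in C^2(U\times\rn)\cap C(\overline U\times\rn)$ one uses the known equivalence (for this regularity class, as in \cite{jwy}) between absolute minimizers and viscosity solutions of Aronsson's equation \eqref{AE1}: every viscosity solution of \eqref{AE1} with datum $g$ is an absolute minimizer with datum $g$, so uniqueness of the latter forces uniqueness of the former; conversely the unique absolute minimizer supplied above solves \eqref{AE1}, giving existence.

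The main obstacle is the comparison step itself — specifically, extracting a usable monotonicity/convexity statement for $T^tu(x)-\bar T^tv(x)$ at an interior maximum when $H$ is merely lower semicontinuous and convex (no $C^2$, no strict convexity, possibly a genuinely one-dimensional-in-$p$ degeneracy along $P$). The delicate points are: (i) showing the action function $\cl_t(x,y)$ is finite, lower semicontinuous, and satisfies the triangle/semigroup inequality together with the right homogeneity-type scaling needed to compare scales, all from (A1)+(A3); and (ii) using (A2) in the sharp form "$\{H(x,p)=0\}\subset P$ for all $x$" to guarantee that zero-cost motion cannot create a fat plateau of $u-v$ protecting an interior maximum — this is the exact point where the hypothesis that the common null set lies in a \emph{hyperplane} (rather than merely having empty interior, as in the $x$-independent case) is used, and getting the geometry of $\cl_t$ near its zero set right is where the real work lies.
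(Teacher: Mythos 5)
Your high-level plan matches the paper's: reduce everything to the comparison principle, characterize absolute subminimizers via convexity of the Hamilton--Jacobi flow $t\mapsto T^tu(x)$ (this is Theorem 1.5 of the paper), and derive existence/uniqueness and the Aronsson-equation uniqueness as corollaries. The last paragraph of your proposal (existence from Barron--Jensen--Wang, uniqueness from \eqref{e1.x2} applied in both orders, Aronsson uniqueness via Yu's identification of viscosity solutions with absolute minimizers) is correct and essentially identical to the paper's argument.

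However, the core comparison step is an acknowledged gap in your proposal, and it is precisely where the theorem's content lies. The difficulty you flag in your final paragraph --- that zero-cost motion ``cannot create a fat plateau of $u-v$ protecting an interior maximum'' --- is real, and ``comparing the forward-in-$t$ behaviour at an interior maximum'' does not close it. The problem is that at an interior maximum point $x_0$ one may have $S^+u(x_0)=S^-v(x_0)=0$, so the convexity/concavity of the flows gives no strict sign and no contradiction. The paper resolves this with three interlocking devices that your proposal does not supply. First, a \emph{stationary point lemma} (Lemma 6.1): if $f$ and $g$ satisfy the convexity/concavity inequalities and the max of $f-g$ is interior, then there is a point where \emph{both} flows are stationary ($f=T^tf=T_tf$ and $g=T^tg=T_tg$). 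Second, a \emph{patching lemma} (Lemma 6.2), which replaces $u$ by approximants $u_\sigma\le u$ with $u_\sigma=u$ on $\partial U$, $u_\sigma\to u$ uniformly, $u_\sigma$ still satisfying pointwise convexity, and crucially $S^+u_\sigma\ge\sigma>0$ everywhere --- this strictly positive lower bound on the slope of the convex map $t\mapsto T^tu_\sigma(x)$ is what forecloses the stationary-point alternative. Third, Lemma 6.3, the place where (A2) (the common null set lies in a fixed hyperplane, not merely has empty interior) is actually used: it gives $\|u-u_\sigma\|_{L^\infty}\to 0$, which is not automatic because $S^+u_\sigma$ small only controls $u_\sigma$ up to a constant on each connected set unless the zero-cost directions are confined to a hyperplane, along which one can reach $\partial U$ with vanishing $d_\lambda$-cost. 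Your sentence about ``optimal trajectories reaching $\partial U$ in finite time'' points vaguely in this direction but is not the right mechanism; the relevant fact is the estimate $d_\lambda(x,x\pm r q)\to 0$ as $\lambda\to 0$ for $q$ normal to the hyperplane $P$, which is what Lemma 6.3 actually proves.

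Two smaller points. Your definition $T^tu(x)=\inf_y\{u(y)+\cl_t(x,y)\}$ is the paper's $T_tu$, not $T^tu=\sup_y\{u(y)-\cl_t(x,y)\}$; this matters since sub- and superminimizers attach to different flows. And the paper must first replace $H$ by $H^a$ ($a>1$), using Lemma 6.4, because Theorem 1.5 and Lemmas 6.1--6.2 require the uniform superlinearity (1.6), which (A1)--(A3) alone do not give (consider $H(x,p)=|p|$); the theorem is then transferred back because absolute minimizers for $H$ and $H^a$ coincide. Your proposal does not mention this reduction.
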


The assertion of uniqueness of viscosity solutions to Aronsson's equation (\ref{AE1}) in Theorem \ref{t1.x2}
follows from that of absolute minimizers, since it has been proven by Yu \cite{y06} that if $H(x,p)\in C^2(U\times\mathbb R^n)\cap C(\overline U\times \mathbb R^n)$ is convex and coercive in $p$-variable, uniformly in $x\in \overline U$,
then any viscosity solution $u\in C(\overline U)$ to (\ref{AE1}) is an absolute minimizer for $H$.

Theorem 1.1
extends the main result of \cite{acjs}, since  (A1), (A2), and (A3) reduce to
the same assumptions as in \cite{acjs} when $H=H(p)$ (see Remark 1.4 below).
When $H(x,p)=\langle A(x) p, p\rangle$,
with $A:\overline U\to\rr^{n\times n}$ a symmetric uniformly elliptic matrix-valued measurable function,
the comparison principle \eqref{e1.x2} and the uniqueness of absolute minimizers, with $u=g$ on $\partial U$,
has been established by \cite{bbm} and \cite{ksz}; while when $H(x,p)$ satisfies $H(x,p)=|p|H(x,\frac{p}{|p|})$, 
\eqref{e1.x2} has been recently obtained by \cite{gxy} by extending the approach in \cite{as,ksz}.  

As a corollary, we prove the following result, which answers the above question by \cite{jwy}.

\begin{cor}\label{t1.x1}
Assume that $H\in C(\overline U\times \rn)$ satisfies\\
(i) $H(x,\cdot)$ is convex for all $x\in\overline U$, and \\
(ii) $H(x,0)=0<H(x,p)$ for any $0\not=p\in\rn$ and $x\in\overline U$.\\
Then there exists a unique absolute minimizer $u\in W^{1,\fz}_{\loc}(U)\cap C(\overline U)$
with  $u=g$ on $\partial U$. If, in addition, $H\in C^2(U\times\rn)$, then there exists a unique viscosity
solution $u\in C(\overline U)$ to  Aronsson's equation (\ref{AE1}).
\end{cor}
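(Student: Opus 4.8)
The plan is to deduce Corollary~\ref{t1.x1} directly from Theorem~\ref{t1.x2}, by verifying that the hypotheses (i)--(ii), together with the continuity $H\in C(\overline U\times\rn)$ and the boundedness of $U$, force the structural assumptions (A1), (A2), and (A3). Assumption (A1) is immediate, since continuity implies lower semicontinuity and (i) is precisely the convexity of $H(x,\cdot)$. Assumption (A2) is also immediate from (ii): because $H(x,0)=0$ while $H(x,p)>0$ for all $p\ne0$, one has $\{p\in\rn:H(x,p)=0\}=\{0\}$ for every $x\in\overline U$, so $\bigcup_{x\in\overline U}\{p:H(x,p)=0\}=\{0\}$, which is contained in any hyperplane through the origin.

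The only nontrivial point is (A3), a uniform-in-$x$ coercivity statement together with a uniform interior-ball bound. The key is the elementary scaling inequality coming from convexity and $H(x,0)=0$: for $R>0$ and $|p|\ge R$, writing $Rp/|p|$ as the convex combination $\frac{R}{|p|}p+(1-\frac{R}{|p|})\cdot 0$ gives $H(x,Rp/|p|)\le\frac{R}{|p|}H(x,p)$, hence
\[
H(x,p)\ \ge\ \frac{|p|}{R}\,\min_{x\in\overline U,\ |q|=R}H(x,q)\qquad (|p|\ge R).
\]
Since $H$ is continuous and $\overline U\times\{|q|=R\}$ is compact, this minimum is attained, and by (ii) it is a strictly positive number $m_R$. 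Taking, say, $R=1$ shows $H(x,p)\ge m_1|p|$ for $|p|\ge1$, uniformly in $x$; thus for each $\lz>0$ one may take $R_\lz:=\max\{1,\lz/m_1\}$ and obtain $\{p:H(x,p)<\lz\}\subset B(0,R_\lz)$ for all $x\in\overline U$. For the inner inclusion, set $M(r):=\max_{x\in\overline U,\ |p|\le r}H(x,p)$, which is finite (continuity on a compact set), nondecreasing, and satisfies $M(0)=0$ with $M(r)\to0$ as $r\to0^+$. Define $r_\lz:=\sup\{r\ge0:M(r)<\lz\}$; then $r_\lz>0$, and for $|p|<r_\lz$ one has $H(x,p)\le M(|p|)<\lz$ for every $x$, i.e. $B(0,r_\lz)\subset\{p:H(x,p)<\lz\}$. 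Finally $r_\lz\to\infty$ as $\lz\to\infty$, since for any $N$ we have $M(N)<\infty$ and hence $r_\lz\ge N$ once $\lz>M(N)$; the remaining inequality $r_\lz\le R_\lz$ is automatic from the two inclusions. This establishes (A3), so Theorem~\ref{t1.x2} applies and yields the comparison principle \eqref{e1.x2} as well as the existence and uniqueness of the absolute minimizer $u\in W^{1,\fz}_\loc(U)\cap C(\overline U)$ with $u=g$ on $\partial U$.

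For the last assertion, observe that the coercivity bound above is uniform in $x$, so if in addition $H\in C^2(U\times\rn)$ then $H\in C^2(U\times\rn)\cap C(\overline U\times\rn)$ is convex and coercive in $p$ uniformly in $x\in\overline U$. The corresponding conclusion of Theorem~\ref{t1.x2} then applies; equivalently, combining the uniqueness of absolute minimizers just proved with Yu's theorem from \cite{y06} (viscosity solutions of \eqref{AE1} are absolute minimizers under exactly these hypotheses) gives the unique viscosity solution $u\in C(\overline U)$ of Aronsson's equation \eqref{AE1} with $u=g$ on $\partial U$. The only real work in the argument is the verification of (A3); once the convexity-scaling inequality above is in hand the remaining estimates are routine, and the corollary is genuinely a specialization of the main theorem.
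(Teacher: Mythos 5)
Your proposal is correct, and the overall strategy is the same as the paper's: show that (i)--(ii), together with continuity of $H$ on the compact set $\overline U\times\rn$, imply (A1), (A2), (A3), then invoke Theorem~\ref{t1.x2} (and Yu's result for the Aronsson equation). The only place the two arguments genuinely diverge is the verification of (A3). The paper works pointwise: it defines for each $x$ the inner/outer radii $r(x,\lambda)$ and $R(x,\lambda)$ of the convex sublevel set $E(x,\lambda)$, proves a semicontinuity claim (namely $r(\cdot,\lambda)$ lower semicontinuous and $R(\cdot,\lambda)$ upper semicontinuous), and then extremizes over $\overline U$ to get uniform constants. You instead work with two global scalar quantities: the scaling inequality $H(x,p)\ge\frac{|p|}{R}\min_{x,|q|=R}H(x,q)$ for $|p|\ge R$ (which follows from convexity and $H(x,0)=0$, and with $R=1$ gives the uniform linear lower bound hence the outer radius $R_\lambda$), and $M(r)=\max_{x\in\overline U,\,|p|\le r}H(x,p)$ (whose monotonicity and $M(0^+)=0$ give the inner radius $r_\lambda=\sup\{r:M(r)<\lambda\}$). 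Your route is a bit more economical since it bypasses the semicontinuity lemma entirely and replaces the contradiction argument for $r_\lambda\to\infty$ with a one-line observation ($r_\lambda\ge N$ once $\lambda>M(N)$); the trade-off is that the paper's $r_\lambda,R_\lambda$ are optimal for each $\lambda$, which is irrelevant here since (A3) only needs \emph{some} admissible pair. Both arguments are sound, and yours is a legitimate, slightly shorter alternative.
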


% \begin{thm}\label{t1.x1}
%Under the assumptions of Conjecture 1.3, if $u,v\in C(\overline U)$ are viscosity solutions of the equation \eqref{e1.x4}, then
%\eqref{e1.x2} holds.
%Consequently, there exists a unique viscosity solution to \eqref{e1.x1}.
%\end{thm}

%We will see that $H(x, p)$ considered in Theorem 1.1 (Conjecture 1.3) satisfies (A1)  through (A3); see the proof of Theorem 1.1.
%If $a(x)$ is bounded away from $0$ and is lower semicontnuous on $\overline U$,
%and $H(p)$ is as considered by ACJS, then $a(x)H(p)$ satisfies above  (A1)  through (A3).
%Moreover, if $A:\overline U\to\rr^{n\times n}$ is semicontinuous and uniformly elliptic, then $A(x)p\cdot p$ also satisfies  (A1)  through (A3).

\begin{rem}\rm
%{\color{red}
%(i) Notice that the assumption (A1) guarantees the existence of absolute minimizer, and also plays a key role in the proof  %of Theorem 1.2.
%Even in the case $H(p)$, the convexity assumption cannot be reduced to quasiconvex.
%But we are not clear if it is possible to remove the lower semicontinuity when proving uniqueness. }

%{\color{red}  (ii) The assumptions (A1) and (A3)   are required to establish the
%comparison property with intrinsic cones of absolute minimizers in \cite{cp},
%which will be used to prove Theorem 1.2.

% \bf Question: (A1), (A3) sharp? }
This example suggests that
(A2) is optimal, since it  can't be replaced by\\
 (A2)$_{\rm{weak}}$: For any $x\in U$, {\it $H(x,0)=\min_{ \rn}H(x,\cdot)=0$,  and $\displaystyle\Big\{p\in\rn\ | \ H(x,p)=0\Big\}$
has no interior points. }
%\end{rem}

\smallskip
\noindent {\it Example}. Let $U=\Big\{x\in\rr^2\ |\ \frac12< |x|< 2\Big\}$ and
$H:\rr^2\to \mathbb R_+$ be a convex function satisfying
$$\Big\{p\in\rr^2\ |\ H(p)=0\Big\}=[-2, 2]\times\{0\}.$$
For $x\in\rr^2\setminus\{0\}$, let $O(x):\rr^2\to\rr^2$ be the rotation mapping $\displaystyle\frac{x}{|x|}$
to $(1,0)^T$.
Set $$\widetilde{H}(x,p)=H( O(x)p), \ \forall\ x\in \overline U\ {\rm{and}}\ p\in\rr^2.$$
Then we have
$$\Big\{p\in\rr^2\ \big|\ \widetilde{H}(x,p)=0\Big\}=\lf\{ \frac{t x}{|x|}\ \big|\  t\in[- 2, 2]\r\}.$$
Hence $\widetilde{H}$ satisfies (A1), (A2)$_\weak$ and (A3), but not (A2).
We want to show that the uniqueness fails for absolute minimizers
for $\widetilde{H}$ in $U$.
In fact, let
$$u(x)=|x|-\frac12
\  {\rm{and}}\ v(x)=  \frac25(|x|^2-\frac14)\ \ {\rm{for}}\ \ x\in\overline U.
$$
Then $u,v\in C(\overline U)\cap C^2(U)$ satisfy that
$u\big|_{\partial U}=v\big|_{\partial U}$, and
$$D u(x)= \frac{ x}{|x|} \ {\rm{and}}\  D v(x)=\frac45 x
\ \ {\rm{for}}\ \ x\in\overline U.
$$
Hence we have that
$$\widetilde{H}(x,Du(x))=0= \widetilde{H}(x,Dv(x)) \ \ {\rm{for}}\ \ x\in\overline U.$$
This yields that both $u$ and $v$ are absolute minimizers, with the same boundary value.
Thus the uniqueness for absolute minimizers fails.
\end{rem}

We would like to make a remark on the relationship between (A3) and the
uniform coercivity of $H$ in $p$-variable.

\begin{rem} {\rm
Assume that $H(x,p)\in\lsc(\overline U\times\rn)$ is convex in $p$ and $H(x,0)=\min_{p\in\rn}H(x,p)=0$.
Then \\
(a) The following statements are equivalent:

(a1) $\bigcup_{x\in\overline U}\{p\in\rn| H(x,p)=0\}$ is bounded.

(a2) For any $\lz>0$, $\exists\ R_\lz>0$ such that  $\{p\in\rn| H(x,p)<\lz\}\subset B(0,R_\lz)$ for all $x\in\overline U$.

(a3) $\displaystyle\lim_{|p|\rightarrow+\infty} H(x,p)=+\infty$ uniformly for $x\in\overline U$.

\noindent (b) If, in addition, $H\in C(\overline U\times \rn)$, then (A3) $\displaystyle\Leftrightarrow$ (a3).\\
\noindent (c) There exists $H\in\lsc(\overline U\times\rn)$, that is convex in $p$ and $H(x,0)=\min_{p\in\rn}H(x,p)=0$,  satisfying (a3) but not (A3).}
\end{rem}

\begin{proof}

\noindent(a) It is obvious that (a3) $\Rightarrow$ (a1).  We argue by contradiction that (a2) $\Rightarrow$ (a3):
Suppose that (a3) were false. Then there exist $N_0>0$, $p_i\in\rn$, with $|p_i|\rightarrow+\infty$, and  $x_i\in\overline U\rightarrow x_\infty\in\overline U$ such that $H(x_i, p_i)< N_0$ for all $i$.
By (a2), there exists $R_{N_0}<+\infty$ such that $|p_i|\le R_{N_0}$ for all $i$. This is impossible. We can also
argue by contradiction that (a3) $\Rightarrow$ (a2): Suppose that (a2) were false. Then $\exists\ \lz_0>0$, $p_i\in \rn$ with
$\lim_{i\rightarrow\fz}|p_i|=+\infty$, and $x_i\in\overline U$ such that
$H(x_i, p_i)<\lz_0$. This clearly contradicts (a3).  To see (a1) $\Rightarrow$ (a3), observe that (a1) yields
that there exists $R_0>0$ such that
$\bigcup_{x\in\overline U}\big\{p: \ H(x,p)=0\big\}\subset B(0, R_0-1).$
Thus  $H(x,p)>0$ for all $x\in\overline U$ and $p\in \partial B(0, R_0)$.
Since $H\in \lsc(\overline U\times \rn)$, there exists $(x_0,p_0)\in\overline U\times\partial B(0,R_0)$ such that
$0<c_0=H(x_0, p_0)\le H(x,p)$ for all $(x,p)\in\overline U\times\partial B(0,R_0).$
This, combined with the convexity of $H(x,\cdot)$ and $H(x,0)=0$ for $x\in\overline U$, implies that
$$H(x,p)\ge \frac{|p|}{R_0} c_0, \ {\rm{for\ all}}\ p\in \rn \ {\rm{with}}\ |p|\ge R_0\ {\rm{and}}\ x\in\overline U.$$
Therefore (a3) holds.

\noindent(b) It is obvious that (A3) implies (a3).  To see that (a3) implies (A3),
set $$E(x,\lz)=\big\{p:\ H(x,p)<\lz\big\} \ {\rm{for}}\ x\in\overline U \ {\rm{and}}\ \lz>0.$$
Since $H\in C(\overline U\times\rn)$ is convex in $p$ and $H(x,0)=0$, we see that
$E(x,\lz)\subset\rn$ is a bounded,  convex open set,  containing $0$, for all $x\in\overline U$.
Then
$$0<r(x,\lz)=\sup\Big\{r>0\ | \ B(0,r)\subset E(x,\lz)\Big\}<+\infty, \ \forall\ x\in\overline U.$$
Similar to the proof of Corollary 1.2 in section 6, we can prove that for any $\lz>0$,
$r(\cdot,\lz)\in \lsc(\overline U)$. Hence $\displaystyle
r_\lz=\min_{x\in\overline U} r(x,\lz)$ exist and $0<r_\lz<+\infty$. It is clear that
$B(0,r_\lz)\subset E(x,\lz)$ for all $x\in\overline U$.
To see that $\displaystyle\lim_{\lz\rightarrow +\infty}r_\lz=+\infty$,
observe that for any $\lz>0$ there exist $x_\lz\in\overline U$ and $p_\lz\in \partial B(0,r_\lz)$ such that
$r_\lz=r(x_\lz,\lz)$ and $H(x_\lz, p_\lz)=\lz.$
If $\displaystyle\lim_{\lz_i\rightarrow\infty}r_{\lz_i}=r_\infty<+\infty$, then,
by assuming $x_{\lz_i}\rightarrow x_\infty\in\overline U$ and $p_{\lz_i}\rightarrow p_\infty\in\partial B(0, r_\fz)$
and applying $H\in C(\overline U\times\rn)$, we would have that
$+\fz=\lim_{\lz_i\rightarrow\fz} H(x_i, p_i)=H(x_\infty, p_\infty)<+\infty.$
This is impossible. Hence $\displaystyle\lim_{\lz\rightarrow +\infty}r_\lz=+\infty$ and (A3) holds.

\noindent(c) Let $a\in\lsc(\overline U)$ be such that
$$a(x)\ge 1, \ \forall\ x\in\overline U; \ a(x)=1, \ \forall \ x\in\partial U; \lim_{x\in U\rightarrow x_0\in\partial U} a(x)
=+\infty,$$
and $H(x,p)=a(x)|p|$ for $x\in\overline U$ and $p\in \rn$. It is readily seen that $H\in \lsc(\overline U\times\rn)$
is convex in $p$ and $0=H(x,0)=\min_{\rn}H(x,p)$ for all $x\in \overline U$, and satisfies (a3).
However, for $\lz>0$, we have that
$$\bigcap_{x\in\overline U}\Big\{p\ | \ H(x,p)<\lz\Big\}=
\bigcap_{x\in\overline U}\Big\{p\ | \ |p|<\frac{\lz}{a(x)}\Big\}=\big\{0\big\},
$$
so that (A3) doesn't hold.
\end{proof}

Corollary 1.2 follows directly from Theorem 1.1,  any $H(x,p)$ given by Corollary 1.2 satisfies  the conditions (A1), (A2), and (A3) of Theorem 1.1, see section 6 for the details.

To prove Theorem 1.1, we will employ some ideas by \cite{acjs} and \cite{as,js,ksz}.
The first crucial step to prove Theorem 1.1 is to establish the convexity criteria for absolute subminimizers,
namely, there is $\delta>0$ such that for any $x\in U$ the map $t\mapsto T^tu(x)$ is convex
on $[0, \delta]$ for any absolute subminimizer $u\in
W^{1,\infty}_{\rm{loc}}(U)\cap C(\overline U)$, here $T^tu(x)$ is the Hamilton-Jacobi flow induced by $H(x,p)$.
Some fundamental properties on absolute subminimizers are stated in the following Theorem.

\begin{thm}\label{t1.x4}
For a bounded domain $U\subset\rn$ and $u\in L^\infty(U)$, assume that
$H$ satisfies  (A1), (A2)$_\weak$, and (A3). If, in addition,
$H(x,p)$ is uniformly super-linear in $p$-variable:
\begin{equation}\label{u-coercive}
\lim_{|p|\rightarrow +\infty}\frac{H(x,p)}{|p|}=+\infty, \ \mbox{uniformly in}\ x\in\overline U,
\end{equation}
 then the following statements are equivalent:\\
(i) $u$ is an absolute subminimizer for $H$ in U.\\
(ii) $u$ satisfies the comparison property with intrinsic cones from above in $U$.\\
(iii) u satisfies the convexity criteria in U.\\
(iv) u satisfies the pointwise convexity criteria in U.
\end{thm}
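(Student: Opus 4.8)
The plan is to prove the cycle of implications $(i)\Rightarrow(ii)\Rightarrow(iii)\Rightarrow(iv)\Rightarrow(i)$, with the action function $\cl_t(x,y)$ induced by $H$ and the associated Hamilton--Jacobi flow $T^tu$ as the central tools. The structural facts about $\cl_t$ established in the preceding sections that I would invoke are: $\cl_t(x,x)=0$ and the subadditivity $\cl_{t+s}(x,z)\le\cl_t(x,y)+\cl_s(y,z)$; the convexity and monotonicity of $t\mapsto\cl_t(x,y)$; the finiteness of the Legendre dual $L=H^{\ast}$, which holds precisely because of the uniform super-linearity \eqref{u-coercive} and guarantees both that $T^tu$ is well defined and that action-minimizing curves exist, with every sub-arc of a minimizer again minimizing; the fact that (A1), (A2)$_\weak$, (A3) make the sub-level bodies $\{p\in\rn:H(x,p)\le\lz\}$, for $\lz>0$, bounded convex neighbourhoods of the origin with nonempty interior depending lower semicontinuously on $x$; the Maupertuis-type identity $d_\lz(x,z)=\inf_{t>0}\bigl[\cl_t(x,z)+\lz t\bigr]$ describing the induced $\lz$-intrinsic Finsler distance $d_\lz$; and the description of the intrinsic cone from above with vertex $z$ and level $\lz$ as $\ccc^+_{z,\lz}(x)=c+d_\lz(x,z)$, the largest function $w$ with $H(x,Dw(x))\le\lz$ and $w(z)=c$, which has $\cf_\fz$-energy exactly $\lz$ on every region avoiding $z$.

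For $(i)\Rightarrow(ii)$ I would argue by contradiction. Given $W\Subset U$, a vertex $z$, and $u\le\ccc^+_{z,\lz}$ on $\partial(W\setminus\{z\})$, the open set $G=\{u>\ccc^+_{z,\lz}\}\cap W$ is compactly contained in $W\setminus\{z\}$, by the continuity of $u$ and the strict downward kink of the cone at $z$, and $u=\ccc^+_{z,\lz}$ on $\partial G$. Replacing $u$ on $G$ by $\min\{u,\ccc^+_{z,\lz}\}$ gives an admissible competitor of $\cf_\fz$-energy $\le\lz$, so the absolute subminimality of $u$ forces $H(x,Du(x))\le\lz$ a.e.\ in $G$; this in turn yields the metric bound $u(x)-u(y)\le d_\lz^{\overline G}(x,y)$ whenever $x,y$ are joined by a rectifiable curve in $\overline G$. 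For $x\in G$ take a $d_\lz$-geodesic from $z$ to $x$ and let $y_0\in\partial G$ be its last exit from $G$; its final arc stays in $\overline G$ and realizes $d_\lz(x,y_0)$, while additivity along the geodesic gives $d_\lz(x,z)=d_\lz(x,y_0)+d_\lz(y_0,z)$. Hence $u(x)\le u(y_0)+d_\lz(x,y_0)=\ccc^+_{z,\lz}(y_0)+d_\lz(x,y_0)=c+d_\lz(y_0,z)+d_\lz(x,y_0)=\ccc^+_{z,\lz}(x)$, contradicting $x\in G$. This is the step I expect to be the \emph{main obstacle}: it rests on the existence, additive splitting and interior behaviour of $d_\lz$-geodesics for a Hamiltonian that is only lower semicontinuous, only locally Lipschitz rather than $C^1$, and genuinely $x$-dependent, together with the gradient-to-metric Lipschitz estimate --- all of which depend on the geometric analysis of $\cl_t$ carried out earlier.

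For $(ii)\Rightarrow(iii)$ I would feed the two-sided information supplied by comparison with intrinsic cones from above --- on small balls $u$ increases away from a point no faster than the rate dictated by $d_\lz$, and this bound is sharp because the cone's level is attained --- into the representation of $T^tu(x)$ through $\cl_t$. Using the subadditivity of $\cl$ and the Maupertuis identity to match the flow speed to cone levels, one obtains, for some uniform $\dz>0$, the midpoint inequality $2\,T^tu(x)\le u(x)+T^{2t}u(x)$ for all $x\in U$ and $t\in[0,\dz]$, i.e.\ $t\mapsto T^tu(x)$ is convex on $[0,\dz]$. The implication $(iii)\Rightarrow(iv)$ is then immediate, since the pointwise convexity criterion is precisely the infinitesimal ($t\to0^{+}$, one-point) form of the convexity of $t\mapsto T^tu(x)$.

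Finally, for $(iv)\Rightarrow(i)$ I would first note that the pointwise convexity criterion is a local form of comparison with intrinsic cones from above, so it yields that property on all sufficiently small balls, which upgrades by a patching argument to comparison with intrinsic cones from above on every $W\Subset U$, i.e.\ $(ii)$; one then reaches $(i)$ by the classical type of reasoning in \cite{ceg}, whereby a function enjoying comparison with the intrinsic cones from above in $U$ cannot be beaten in $\cf_\fz$-energy on any $V\Subset U$ by a competitor with the same boundary data. The super-linearity \eqref{u-coercive} is used once more, to keep the intrinsic cones well defined and suitably steep near their vertices. Assembling these implications closes the cycle and proves the theorem.
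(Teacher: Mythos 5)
Your outline follows the paper's cycle $(i)\Rightarrow(ii)\Rightarrow(iii)\Rightarrow(iv)\Rightarrow(i)$ and your $(i)\Rightarrow(ii)$ argument --- comparison against $\min\{u,d_\lambda(x_0,\cdot)+c\}$, the gradient bound $H(x,Du)\le\lambda$ on the excess set, followed by additivity along a $d_\lambda$-geodesic that exits the set --- is essentially the paper's proof of this step. However, the two core implications have genuine gaps.

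For $(ii)\Rightarrow(iii)$ you reduce the claim to the midpoint inequality $2\,T^tu(x)\le u(x)+T^{2t}u(x)$ for all small $t$, and declare that this \emph{is} convexity of $t\mapsto T^tu(x)$. It is not: midpoint convexity with one endpoint pinned at $0$ does not imply convexity of a continuous function (a concave function with $\phi(0)=0$ and sufficiently negative initial slope still satisfies $2\phi(t)\le\phi(0)+\phi(2t)$). What is actually needed is a three-point inequality with all three abscissas strictly positive, equivalently the slope estimate $S^{+}(T^s u)(x)\le\frac{T^{s+h}u(x)-T^su(x)}{h}$ for \emph{all} $s\in[0,\delta)$, not just $s=0$. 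This is precisely the obstruction the paper flags: one does not know that $T^su\in\cica(U)$ for $s>0$, so the estimate at time $s$ cannot be obtained by merely re-running the $s=0$ argument for $T^su$. The paper overcomes this by a direct argument using the level-set family $E^t_\lambda(x)$ of Theorem~\ref{t3.x6}, which your sketch does not engage with. Without this, your route to $(iii)$ fails.

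For $(iv)\Rightarrow(i)$ you propose going $(iv)\Rightarrow(ii)\Rightarrow(i)$ by a ``patching argument'' and a reference to \cite{ceg}-style reasoning. Neither leg is supplied. Passing from the pointwise-local information in $(iv)$ to a global comparison principle like $(ii)$ is not a routine covering step; it requires a strong-maximum-principle mechanism, which here is exactly the slope increasing estimate (Lemma~\ref{l5.x5}): if $T^tu(x)$ is attained at $y$, then $\frac{T^tu(x)-u(x)}{t}\le S^+u(y)$. The paper derives $(iv)\Rightarrow(i)$ directly from this lemma via the compact-set/max-of-$u-v$ argument, and explicitly notes that the known proofs of the slope increasing estimate for $H=H(p)$ rely on the linearity $\cl_t(x,y)=tL(\frac{y-x}{t})$, which is unavailable here; a new argument based on Theorem~\ref{t3.x6} is required. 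Your sketch does not state, let alone prove, any version of this estimate, and the appeal to \cite{ceg} does not transfer to $x$-dependent $H$. Finally, $(iii)\Rightarrow(iv)$ is not quite ``immediate'': the pointwise criterion packages the extra requirements $u\in\lip_{\loc}(U)$ and $S^+u\in\usc(U)$, which the paper extracts from the monotonicity of the difference quotients and from Theorem~\ref{t3.x6} via Lemma~\ref{l3.x9}; that content should be acknowledged.
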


Definitions of  absolute subminimizers, comparison property with intrinsic cones from above (or CICA),
and (pointwise) convexity criteria will be given in section 2.

A few remarks on Theorem 1.5 are in order:

When $H=H(p)\in C^2(\rn)$,  (i)$\Rightarrow$(ii) is proven in \cite{gwy} and (ii)$\Rightarrow$(iii) is given by \cite{js}.
When $H=H(p)\in \rm{Lip}_\loc(\rn)$,  Theorem 1.5 was proven by \cite{acjs}.
The idea of \cite{acjs} relies heavily on the concrete geometric structure  of  the induced action  function
$\mathcal L_t(x,y)=tL\big(\frac{y-x}t\big)$ (see also Lemma 3.11 below).

When $H(x,p)$ satisfies  (A1), (A2)$_{\weak}$, (A3), and (\ref{u-coercive}),
 (i)$\Rightarrow$(ii) is essentially proved by \cite{cp},
 and we will prove  (ii)$\Rightarrow$(iii)$\Rightarrow$(iv)$\Rightarrow$(i)  in section 5.
To this end, we need to study the geometric structures of $H(x,p)$  and the corresponding action  function
$\cl_t(\cdot,\cdot)$ via
 the family of intrinsic pseudo-distances $\{d_\lz\}_{\lz\ge0}$,
 which may have its own interest, see Theorem \ref{t3.x6} and subsection 3.1.
Such geometric structure plays important roles in the proof of  (ii)$\Rightarrow$(iii)$\Rightarrow$(iv)$\Rightarrow$(i).
Comparing Theorem \ref{t3.x6} for general $H(x,p)$ with  Lemma 3.11 for $H=H(p)$ and Lemma 3.12
 for $ H=\langle A(x)\cdot p, p\rangle$ in section 3.3,
we will see that the geometric structure of the action  function induced by general $H(x,p)$
is  much more complicated.
%This raises several new difficulties that we need to overcome
% during the proof of (ii)$\Rightarrow$(iii)$\Rightarrow$(iv)$\Rightarrow$(i) in Theorem 1.4.

With section 3 at hand, we will establish the localization, semigroup and Lipschitz properties of the
Hamilton-Jacobi flow $T^t u(x)$ under (A1), (A2)$_{\weak}$, (A3), and (\ref{u-coercive}), see section 4
for the details.

In section 5, we prove Theorem 1.5 by establishing (i) $\Rightarrow$ (ii) $\Rightarrow$ (iii) $\Rightarrow$ (iv) $\Rightarrow$ (i). We have to overcome several new difficulties due to the complicate geometric structures of the action  function
$\cl_t(\cdot,\cdot)$ induced by $H(x,p)$.
Especially, in order to establish (ii)$\Rightarrow$(iii), we have to prove the key inequality \eqref{e5.x3}, with $s\ge 0$,
in order to obtain convexity of the map $t\mapsto T^tu$
for $u\in\cica(U)$.  Since $u\in\cica(U)$ doesn't necessarily imply $T^su\in\cica(U)$,
 \eqref{e5.x3} for $s>0$ doesn't follow from  \eqref{e5.x3} for $s=0$.
%A new proof without using $T^su\in\cica(U)$ for \eqref{e5.x3} with $s>0$ is required.
With the help of Theorem \ref{t3.x6} and a careful analysis of the Hamilton-Jacobi flow $T^t u$,
we manage to give a direct proof of \eqref{e5.x3} for $s>0$,
 which seems to be new even in the case  $H=H(p)$,  see section 5.2 for the details.
 The proof of (iii)$\Rightarrow$(iv) relies mainly on Theorem \ref{t3.x6}, see section 5.2.
 To prove (iv)$\Rightarrow$(i), we need to establish Lemma \ref{l5.x5} on slope increasing estimate.
It is worthwhile mentioning that proofs for slope increasing estimate by \cite{acjs,jwy} rely
on the linearity of  action function $\mathcal L_t(x,y)=tL\big(\frac{y-x}{t}\big)$.
Since such a linearity may not be available for the action function $\cl_t(\cdot,\cdot)$ associated with
general $H(x,p)$,  a new argument for the slope increasing estimate based on Theorem
\ref{t3.x6}  was developed  in section 5.3.

The second crucial step to prove Theorem 1.1  is to establish the patching lemma, Lemma 6.2, under  (A1),
(A2), (A3), and (\ref{u-coercive}),  for $H(x,p)$.
To prove Lemma 6.2, we need  to establish the approximation Lemma 6.3,
whose proof relies on  (A2) and a careful analysis based on the lower semicontinuity of $H$. Notice that the example in Remark 1.3 shows that (A2) in Lemma 6.3  can not be relaxed to  (A2)$_\weak$.
 Thanks to Lemma 6.3, Theorem \ref{t3.x6}, and Lemma \ref{l5.x5},
 and the utilization of intrinsic pseudo-distances $\{d_\lz\}_{\lz>0}$, we
 can prove  Lemma 6.2 by modifying  the arguments by \cite{acjs},
 see section 6.2 for the details.

Since Theorem 1.5, Lemma 6.1, and Lemma 6.2
all requires the condition (\ref{u-coercive}),
we can't directly apply Theorem 1.5 and Lemmas 6.1 and 6.2 to prove Theorem 1.1.
However, as shown by Lemma \ref{l6.x2.0}, $H^a$ for any $a>1$ satisfies (A1), (A2), (A3), and (\ref{u-coercive});
while Theorem 1.1 remains to be same if we replace $H$ by $H^a$.
Thus we can apply Theorem 1.5 and Lemmas 6.1, 6.2 to $H^a$ for $a>1$ to establish
Theorem 1.1.

To end this section, we would like to make a few comments on regularity  of absolute minimizers or viscosity
solutions to Aronsson's equation.
When $H=|p|^2$, Savin \cite{s05} and Evans and Savin \cite{es08} have established
$C^1$ and $C^{1,\az}$-interior regularity of infinity harmonic functions
respectively, and Wang and Yu \cite{wy11} obtained the boundary $C^1$-regularity for $n=2$;
while Evans and Smart \cite{es11a,es11b} have proved the interior everywhere differentiability
of infinity harmonic functions, and Wang and Yu \cite{wy11} have shown their boundary differentiability
provided $g\in C^1(\partial U)$ for $n\ge 3$. However,  the $C^1$-regularity for infinity harmonic functions
remains largely open.
When $H(p)\in C^2(\rn)$ is convex, Wang and Yu \cite{wy}
have showed  the $C^1$-regularity of absolute minimizers for $n=2$;
while the corresponding $C^1$-regularity for $n\ge 3$ remains open.
When $H(x,p)=\langle A(x)\cdot p, p\rangle$, with $A\in C^{1,1}(\Omega, \mathbb R^{n\times n})$
symmetric and uniformly elliptic, the interior everywhere differentiability of absolute minimizers was recently obtained
by \cite{swz} for $ n\ge2$; while the corresponding $C^1$-regularity is also open.
We believe that the properties established in the paper, such as Theorem 1.5,
may be useful for the investigation of regularity of the Aronsson equation for general
Hamiltonian functions $H(x,p)$.

\section{Definitions and notions}

In this section, we will assume that $H(x,p)$ satisfies that assumptions (A1), (A2)$_ \weak $  and (A3).
Let $\lip( U)$ denote the space of Lipschitz functions $u:U\to\rr$,
that is,
$$\lip(u,U):=\sup_{x,y\in U,\,x\ne y}\frac{|u(x)-u(y)|}{|x-y|}<\fz.$$
Recall that $u\in \lip( U)$ iff $u\in W^{1,\,\fz}(U)$, that is, $u$ is differentiable almost everywhere in $U$
and its gradient $Du$ is  bounded in $U$.  We say $u\in \lip_\loc(U)$ (or $u\in W^{1,\infty}_\loc(U)$ equivalently)
if $u\in \lip(V)$ (or $u\in W^{1,\infty}(V)$ equivalently) for any open subset $V\Subset U$.

\begin{defn} \label{d2.x1}

(i) A function $u \in\lip_{\loc}( U)$ is called an  absolute  subminimizer
  in $U$ for $H$,
if for each $V\Subset U$,  $v\in\lip(V)\cap C(\overline V)$ satisfies
$v\le u$ in $V$, and $v=u$ on $\partial V$,  then
$$\mathcal F_\infty(u,V)\le \mathcal F_\infty(v,V).$$
(ii) A function $u \in\lip_{\loc}( U)$ is called   an  absolute  superminimizer
  in $U$ for $H$,
if for each $V\Subset U$, $v\in\lip(V)\cap C(\overline V)$ satisfies $v\ge u$ in $V$, and
$v=u$ on $\partial V$, then
$$\mathcal F_\infty(u,V)\le \mathcal F_\infty(v,V).$$
(iii) A function $u \in\lip_{\loc}( U)$ is called  an absolute  minimizer in $ U$ for $H$,
if it is both an absolute subminimizer and an absolute superminimizer in $ U$ for $H$.
\end{defn}

%CP showed that the absolute  (sub-/sup-) minimizer be characterized by comparison property with intrinsic cones (from above/below).
To introduce the property of comparison  with  intrinsic cones (also called as
the comparison property with distance functions by \cite{cp}), we set, for every $\lz\ge 0$,
\begin{equation}\label{e2.x0}
L_\lz (x,q):=\sup_{\{p\in\rn:\ H(x,p)\le\lz\}} p\cdot q, \quad  \forall \ x\in\overline U \ \mbox{and }\  q\in\rn.
\end{equation}
For $0\le a<b\le +\infty$, let  $\gz:[a,\,b]\to \overline U$ be a Lipschitz curve, that is, there exists a constant $C>0$
such that $|\gz(s)-\gz(t)|\le C|s-t|$ whenever $s,t\in[a,b]$.
The $L_\lz$-length of $\gz$ is defined by
\begin{equation}\label{e2.x1}\ell_\lz(\gz):= \int_a^bL_\lz\lf(\gz(\theta),\gz'(\theta)\r)\,d\theta,\end{equation}
which is nonnegative, since $L_\lambda(x,q)\ge 0$ for any $x\in\overline U$ and $q\in\rn$.
For a pair of  points  $x,\,y\in\overline U$, the $d_\lambda$-distance from $x$ to $y$ is defined by
\begin{equation}\label{e2.x2}
d_\lz(x,y):= \inf\Big\{\ell_\lz(\gz)\ |\ \gamma\in\mathcal C(a,b;x,y; \overline U)\Big\},
\end{equation}
where $\mathcal C(a,b;x,y; \overline U)$ denotes the space of all Lipschitz curves $\gz:[a,\,b]\to  \overline U$
that joins $x$ to $y$, that is $\gamma(a)=x$ and $\gamma(b)=y$.
Since $L_\lambda(x,\cdot)$ is positively homogeneous of degree one for $x\in\overline U$,
a simple change of variables shows that $d_\lz(x,y)$ is independent of
the choices $0=a<b\le +\infty$ in $\mathcal C(a, b; x,y; \overline U)$.
It is not hard to verify that $d_\lz$ is a pseudo-distance on $\overline U$:\\
(i) $d_\lz(x,y)\ge0$ for all $x, y\in \overline U$, with the equality iff $x=y$; and \\
(ii) $d_\lz(x,y)\le d_\lz(x,z)+d_\lz(z,y)$ for all $x,y,z\in\overline U$.\\
However, since $H(x,p)$ is not assumed to be an even function in $p$-variable,
$d_\lz(\cdot, \cdot)$ may not be symmetric, in general.

A function is called an intrinsic cone on $\overline U$,
 if it is either $d_\lz(x_0,\cdot)+c$ or $d_\lz(\cdot,x_0)+c$ for some $\lz\ge 0, c\in\rr$
 and $x_0\in U$. Notice that if $U=\rn$ and $H(x,p)=|p|^2$ for $x, p\in\rn$, then
  $d_\lz(x,y)=\sqrt\lz |x-y|$ for all $x,y\in\rn$ becomes
 the standard round cone function, which was  introduced by \cite{ceg} (see also \cite{y07} and \cite{gwy}).

 Denote by $\usc(U)$ (or $\lsc(U)$ respectively)  the space of all upper semicontinuous
 (or lower semicontinuous, respectively) functions in $U$.  Notice that
 $C(U)=\usc(U)\cap\lsc(U)$. We also set $C_b(U)=C(U)\cap L^\infty(U)$.

 We now given the definition of comparison with intrinsic cones.
\begin{defn}\label{d2.x2}
(i) A function $u\in \usc( U)$  enjoys the comparison property with intrinsic cones from above in $U$
for $H$, written as $u\in\cica(U)$,  if for every $\lz\ge0$, $c\in\rr$, $x_0\in  U$ and $V\Subset  U$,
$$\max_{y\in \overline{V\setminus\{x_0\}}}\Big\{u(y) -(d_\lz(x_0,y) +c)\Big\}
=\max_{y\in \partial( V\setminus\{x_0\})}\Big\{u(y) -(d_\lz(x_0,y) +c)\Big\}.$$

\smallskip
\noindent (ii) A function $u\in \lsc( U)$  enjoys the comparison property with intrinsic cones from below
in $ U$ for $H$, written as  $u\in\cicb(U)$, if
for every $\lz\ge0$, $c\in\rr$, $x_0\in  U$ and $V\Subset  U$,
$$\min_{y\in \overline{V\setminus\{x_0\}}}\Big\{d_\lz(y,x_0) +c+u(y)\Big\}
= \min_{y\in \partial( V\setminus\{x_0\})}\Big\{d_\lz(y,x_0) +c+u(y)\Big\}.$$

\smallskip
\noindent (iii) A function $u\in C( U)$  enjoys the comparison property with intrinsic cones  in $ U$
for $H$, if $u\in\cica(U)\cap \cicb(U)$.

\end{defn}

Now, we introduce property of (pointwise) convexity (or concavity)  criteria, which is an extension of
that \cite{acjs} and \cite{js} where $H=H(p)$ was considered. To do it, we need to
recall the Hamilton-Jacobi flow.
Let $L$ be the Lagrangian corresponding to $H$ or, equivalently, the Legendre transform of $H$:
$$ L(x,q):=\sup_{p\in\rn}\Big\{p\cdot q-H(x,p)\Big\}, \ x\in\overline U \ {\rm{and}}\ q\in\rn.
$$
It follows directly from  (A1) and (A2)$_\weak$
that $L$ is upper semicontinuous on $\overline U\times\rn$;
$0=L(x,0)\le L(x,q)$ for all  $x\in\overline U$ and $q\in\rn$;
and $L(x,\cdot)$ is convex on $\rn$ for all $x\in\overline U$. If
$H$ satisfies (\ref{u-coercive}), then it is not hard to show that
$L(x,q)<+\infty$ for all $x\in\overline U$ and $q\in\rn$, and
\begin{equation}\label{u-coercive1}
\lim_{|q|\rightarrow\infty}\frac{L(x,q)}{|q|}=+\infty, \ {\mbox{uniformly in}}\ x\in\overline U.
\end{equation}

The action function $\cl_t(\cdot,\cdot)$, corresponding to $L$, is defined as follows:
for all $t>0$ and $  x,y\in \overline U$,
$$\cl_t(x,y):= \inf\Big\{\int_0^tL\lf(\gz(\theta),\gz'(\theta)\r)\,d\theta\ \big|\ \gamma\in \mathcal C(0,t; x, y; \overline U)\Big\}.$$
It is easy to see that for $t>0$, $\cl_t(x,x)=0$ for all $x\in\overline U$, and $0\le \cl_t(x,y)$ for all $x,y\in \overline U$.
For $t=0$, we set
$$\cl_0(x,y)=\begin{cases} 0 & \ {\rm{if}}\ x=y\in\overline U,\\
+\infty & \ {\rm{if}}\ x\not=y\in\overline U.
\end{cases}
$$
The action function $\cl_t(\cdot,\cdot)$ induces two Hamilton-Jacobi flows.
For every $u\in L^\fz(U)$, $x\in \overline U$ and $t\ge 0$, we define
$$T^tu (x):=\sup_{y\in  \overline U}\Big\{u(y)-\cl_t(x,y)\Big\} \ {\rm
and}\
 T_tu (x):=\inf_{y\in \overline U}\Big\{u(y)+\cl_t(y,x)\Big\}.$$
 %which guarantees the maps $t\mapsto T^t u(x)$ and $t\mapsto T_t u(x)$ continuous from the right at $t=0.$(see remark)
 It is clear that $T^0u(x) =T_0u(x) =u(x)$ for all $x\in \overline U$, and
\begin{eqnarray}\label{e2.x3}
\inf_{y\in \overline U}u(y)\leq T_t u(x)\leq u(x)\leq T^t u(x)\leq\sup_{y\in \overline U}u(y),\ \forall\;x\in \overline U
\ {\rm{and}}\ t\ge0.
\end{eqnarray}
%We also observe that $T_t$ and $T^t$ preserve the order and commute with constants, that is, if  $u\leq v$ in $U$, then for all $t>0$
%we have $
% T^t u \leq T^t v$ and $ T_t u \leq T_t v$ in $U$,
% and  for all $t>0$ and   $c\in\rr$, we have
% $
%T^t(u+c)=T^t u+c$ and $T_t(u+c)=T_t u+c$.

Employing Hamilton-Jacobi flows,  we can define the  convexity (or concavity)  criteria
and the pointwise convexity (or concavity)  criteria.

\begin{defn}\label{d2.x4}
(i) A function $u\in C_b(U)$  enjoys the convexity   criteria in $U$  for $H$,
 if for every $V \Subset U,$ there exists $\delta=\dz(V)>0$ such that
 the map $t\mapsto T^t u(x)$ is convex  on $ t\in[0,\delta(V)]$ for all $x\in V$.

\smallskip
\noindent (ii) A  function $u \in C_b(U)$  enjoys the concavity  criterion in $U$ for $H$,
 if for every $V \Subset U,$ there exists $\delta=\dz(V)>0$ such that
 the map $t\mapsto  T_t u(x)$ is concave  on $ t\in[0,\delta(V)]$ for all $x\in V$.
 \end{defn}

For every  $u\in L^\fz(U)$  and $x\in U$,
we set
\begin{equation}\label{e2.x6} S^+u(x) := \limsup_{s\to 0}\frac{T^su(x)-u(x)}{s}\quad {\rm and}\quad
S^-u(x) := \liminf_{s\to 0}\frac{u(x)-T_su(x)}{s}.
\end{equation}
It is clear that both $S^{-}u(x)$ and $S^+u(x)$ are non-negative for all $x\in U$.
 \begin{defn}\label{d2.x5}
 (i)  A function $u \in C_b(U)\cap\lip_\loc(U)$ enjoys the pointwise convexity   criteria in $U$ for $H$
 if  $S^+u\in\usc(U) $,   and   for each $x \in U$,  there exists $ \dz(x)>0$ such that
 the map $t\mapsto T^tu(x)$ is convex on $[0,\dz(x)]$.

\smallskip
\noindent (ii)  A function $u \in C_b(U)\cap\lip_\loc(U)$ enjoys the pointwise concavity   criteria in $U$ for $H$
 if   $S^-u\in\lsc(U) $,   and   for each $x \in U$,  there exists $ \dz(x)>0$ such that
 the map $t\mapsto T_tu(x)$ is concave on $[0,\dz(x)]$.
\end{defn}

As already pointed by \cite{acjs} for the case $H=H(p)$, the condition that $u \in \lip_\loc(U)$ and $S^+u\in\usc(U) $
is necessary to characterize the absolute subminimizers for $H=H(x,p)$.

To end this section,  we would like to point out that for any $H$ satisfying (A1), (A2) (or (A2)$_{\rm{weak}}$),
and (A3), if we define a Hamiltonian $\widehat H$ by
$$\widehat{H}(x,p):=H(x,-p), \ \mbox{for}\ x\in\overline U,\ p\in\rn,$$
then $\widehat{H}$ also satisfies A1), (A2) (or (A2)$_{\rm{weak}}$),
and (A3).  Furthermore, we have the following.
\begin{rem}\label{r2.x3}{\rm
(i)  $u\in\usc(U)$ is an absolute superminimizer for $H$ in $U$
 iff $v:=-u\in\lsc(U)$ is an absolute subminimizer for $\widehat{H}$ in $U.$\\
(ii) For all $x\in\overline U$,
$\lz\ge0$ and $q\in\rn$, it holds that
$$\widehat {L}_\lz(x,q)\big(\displaystyle:=\sup_{\{\widehat{H}(x,p)\le\lambda\}}p\cdot q\big)=L_\lz(x,-q),$$
and
$$\widehat\ell_\lz(\widehat\gz) \big(:=\int_a^b \widehat{L}_\lz(\widehat\gamma(t),\widehat\gamma'(t))\,dt\big)
= \ell_\lz( \gz) $$
where
$\widehat\gz(t)=\gz(a+b-t)$ for all $t\in[a,b]$ and $\gz: [a,b]\to \overline U$ is Lipschitz.
Let $\widehat {d}_\lz$ denote the pseudo-distance of
$\widehat H$ (defined by \eqref{e2.x2} with $H$ replaced by $\widehat H$).
Then direct calculations lead
$$\widehat {d}_\lz(x,y)=d_\lz(y,x), \ \forall\ \lz\ge0, \ x, y\in\overline U.
$$
Hence $u\in\cicb(U)$ for $ H$ iff $-u\in\cica(U)$ for $\widehat H$.
\\(iii)  Denote by
$\widehat{\cl}_t$, $\widehat {T}^t$ and $\widehat {T}_t$
the action function and the Hamilton-Jacobi flows  associated to $\widehat H$.
Then   it holds
$$\widehat{\cl}_t(x,y)=\cl_t(y,x),  \ \forall\ t\ge 0, \ x,y\in\overline U.$$
Hence we have that for any $u\in L^\fz(U)$,
$$\widehat T^t(-u)(x)=-T_tu(x) \quad \mbox{and}\quad  \widehat T_t(-u)(x)=-T^tu(x),
\ \forall \ t\ge 0, \ x\in\overline U.$$
Therefore, $u$ satisfies the concavity criteria for $H$ iff $-u$ satisfies
the convexity criteria for $\widehat H$.
\\
(iv) Similar to \eqref{e2.x6}, we can define $\widehat S^+u(x)$ (or $\widehat S^-u(x)$)
with $T^ s$ (or $T_s$) replaced by $\widehat T^s$ (or $\widehat T_s$).
Then
$$\widehat S^+(-u)(x)=-S^-u(x)\quad \mbox{and}\quad \widehat S^-(-u)(x)= -S^+u(x),\ \forall x\in U.$$
In particular, $S^-u\in\lsc(U)$  iff $\hat S^+(-u)\in\usc(U)$.
Hence  $u$ satisfies the pointwise concavity criteria for $H$
iff $-u$ satisfies  the pointwise convexity criteria for $\hat H$.}
\end{rem}
By Remark \ref{r2.x3},  we only need to work with absolute subminimizers,
$\cica(U)$,  the convexity criteria, and the pointwise convexity criteria for $H$.

\section{Structure of action function by pseudo-distance%$\cl_t$ and $d_\lz$
}

In this section, we will assume that $H(x,p)$ satisfies that  (A1), (A2)$_ \weak $, (A3), and (\ref{u-coercive}).
The main aim is to establish some geometric structure of $\cl_t(\cdot,\cdot)$ via the pseudo-distance $d_\lz$,
namely Theorem \ref{t3.x6}, which will play a crucial role in the proof of Theorems 1.5 and 1.1.
Theorem 3.6 may have its own interest.
In subsection 3.3, we also examize the geometric structure of $\cl_t(\cdot,\cdot)$ for two special
types of $H(x,p)$: i) $H(x,p)=H(p)$; and ii) $H(x,p)=\langle A(x)p, p\rangle$ for a uniformly elliptic
symmetric matrix-valued function $A:\Omega\to \mathbb R^{n\times n}$.

\subsection{Elementary properties of $\cl_t(\cdot,\cdot)$}
For all $x,y\in \overline U$, define the euclidean distance $d_U$, subject to $U$,
between $x$ and $y$ by
$$d_U(x,y):=\inf\Big\{\int_0^1 |\gz'(\theta)|\,d\theta\ \big|\ \gamma\in\mathcal C\big(0, 1; x, y; \overline U\big)\Big\}.$$
When $U$ is convex, $d_U(x,y)=|x-y|$ for all $x,y\in\overline U$;  when $U$ is non-convex, if
 the line segment joining $[x, y]\subset U$, then $d_U(x,y)=|x-y|$; and
$d_U(x,y)\ge |x-y|$ for general $x,y\in\overline U$.
For $x\in U$ and $\lz, r>0$, define $d_U$ and $d_\lz$ balls
$$B_{d_U}(x,r)=\Big\{y\in \overline U\ |\ d_U(x,y)<r\Big\}, \
B_{d_\lz}(x,r)=\Big\{y\in \overline U\ |\  d_\lz(x,y)<r\Big\},$$
and the euclidean ball $B(x,r)=\big\{y\in\rn: \ |y-x|<r\big\}$.

\begin{lem}\label{l2.w16} For all $\lz>0 $, the following holds:
\begin{eqnarray}\label{e3.x1}
 r_{\lambda}d_U(x,y)\leq d_{\lambda}(x,y)\leq   R_{\lambda}d_U(x,y), \ \forall\ x,y\in \overline U.
\end{eqnarray}
In particular, for every $\lz>0$ and $x\in U$, if $0<r\le \dist(x,\partial U)$, it holds that
\begin{equation}\label{e3.x1.0}
B_{d_U}(x,\frac{r}{R_\lz})\subset B_{d_\lz}(x,r)\subset B_{d_U}(x,\frac{r}{r_\lz}).
\end{equation}
\end{lem}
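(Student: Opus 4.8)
The plan is to derive both \eqref{e3.x1} and \eqref{e3.x1.0} directly from the definitions, using assumption (A3) as the sole quantitative input. The key observation is that (A3) says precisely that the sublevel set $\{p : H(x,p) < \lz\}$ is sandwiched between the balls $B(0, r_\lz)$ and $B(0, R_\lz)$, uniformly in $x \in \overline U$; passing to the closure (or just using that $\{H(x,p) \le \lz\} \subset \overline{\{H(x,p') < \lz'\}}$ for any $\lz' > \lz$, together with continuity-type arguments, or simply noting $B(0,r_\lz) \subset \{H(x,\cdot)\le \lz\} \subset \overline{B(0,R_\lz)}$), one gets that for the support-function-type quantity $L_\lz(x,q) = \sup_{\{H(x,p)\le\lz\}} p\cdot q$ defined in \eqref{e2.x0}, the pointwise two-sided bound
\begin{equation*}
r_\lz |q| \le L_\lz(x,q) \le R_\lz |q|, \qquad \forall\, x\in\overline U,\ q\in\rn
\end{equation*}
holds, since the support function of $B(0,\rho)$ in direction $q$ is exactly $\rho|q|$, and the support function is monotone under set inclusion.

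Next I would integrate this pointwise inequality along curves. For any Lipschitz curve $\gz:[a,b]\to\overline U$, the defining formula \eqref{e2.x1} for $\ell_\lz(\gz)$ and the bound above give
\begin{equation*}
r_\lz \int_a^b |\gz'(\theta)|\,d\theta \le \ell_\lz(\gz) \le R_\lz \int_a^b |\gz'(\theta)|\,d\theta.
\end{equation*}
Taking the infimum over all $\gz \in \mathcal C(a,b;x,y;\overline U)$ — and recalling from the text just before the statement that $d_\lz(x,y)$ is independent of the choice of parametrization interval, while $d_U(x,y)$ is the infimum of the Euclidean length — yields \eqref{e3.x1} immediately: the infimum of $\ell_\lz$ is squeezed between $r_\lz$ and $R_\lz$ times the infimum of the Euclidean arclength, which is $d_U(x,y)$.

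Finally, \eqref{e3.x1.0} is a purely formal consequence of \eqref{e3.x1}. If $y \in B_{d_U}(x, r/R_\lz)$, i.e. $d_U(x,y) < r/R_\lz$, then by the right inequality in \eqref{e3.x1}, $d_\lz(x,y) \le R_\lz d_U(x,y) < r$, so $y \in B_{d_\lz}(x,r)$; conversely if $d_\lz(x,y) < r$, then by the left inequality $r_\lz d_U(x,y) \le d_\lz(x,y) < r$, so $d_U(x,y) < r/r_\lz$, i.e. $y \in B_{d_U}(x, r/r_\lz)$. The hypothesis $0 < r \le \dist(x,\partial U)$ is not really needed for the set inclusions themselves but is stated so that the balls $B_{d_U}(x,\cdot)$ in question genuinely sit inside $U$ and agree with Euclidean balls; I would just remark on this.

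The only mildly delicate point — and the one I would be most careful about — is the passage from the \emph{open} sublevel set $\{H(x,p) < \lz\}$ appearing in (A3) to the \emph{closed} one $\{H(x,p) \le \lz\}$ appearing in the definition \eqref{e2.x0} of $L_\lz$. For the lower bound $L_\lz(x,q) \ge r_\lz|q|$ this is harmless since $B(0,r_\lz) \subset \{H(x,\cdot) < \lz\} \subset \{H(x,\cdot) \le \lz\}$ directly. For the upper bound one needs $\{H(x,\cdot) \le \lz\} \subset \overline{B(0,R_\lz)}$, which follows because $\{H(x,\cdot) \le \lz\} \subset \{H(x,\cdot) < \lz'\} \subset B(0,R_{\lz'})$ for every $\lz' > \lz$, combined with taking an infimum over $\lz' \downarrow \lz$ — or, even more simply, from the fact (established in Remark 1.4(a)) that these sublevel sets are bounded and the closed sublevel set is contained in the closure of any strictly larger open one. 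I will spell this out in one sentence and otherwise the argument is routine.
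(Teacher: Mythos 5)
Your proof is correct and follows essentially the same route as the paper: bound $L_\lz(x,q)$ between the support functions of $B(0,r_\lz)$ and $B(0,R_\lz)$ using (A3), integrate along Lipschitz curves, take the infimum, and then read off \eqref{e3.x1.0} as a formal consequence of \eqref{e3.x1}. You are in fact slightly more careful than the paper's proof, which silently replaces the open sublevel set in (A3) by the closed one in the definition \eqref{e2.x0} of $L_\lz$; your remark that $\{H(x,\cdot)\le\lz\}=\overline{\{H(x,\cdot)<\lz\}}\subset\overline{B(0,R_\lz)}$ (using that the finite-valued convex function $H(x,\cdot)$ is continuous and its strict sublevel set is nonempty) cleanly closes that small gap.
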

\begin{proof}
It follows from (A3) that for all $x\in\overline U$ and $p\in\rn$, we have that
$$r_{\lambda}|q|=\sup_{p\in B(0,r_{\lambda})}p\cdot q\le L_{\lambda}(x,q) =\sup_{\{H(x,p)\leq\lambda\}}p\cdot q
\le\sup_{p\in B(0,R_{\lambda})}p\cdot q=R_\lz |q|.
 $$
Thus, for every $\gz\in \mathcal C\big(0,1; x, y; \overline U\big)$, we have
that
$$r_\lz \int_0^1|\gamma'(t)|\,dt\le \int_0^1 L_\lz(\gamma(t), \gamma'(t))\,dt
\le R_\lz \int_0^1|\gamma'(t)|\,dt.$$
Taking infimum over all $\gz\in \mathcal C\big(0,1; x, y; \overline U\big)$, this yields \eqref{e3.x1}.
\end{proof}

 \begin{lem}\label{l3.x2}
 (i) For all $x\in\overline U$ and $q\in\rn$, we have that
 \begin{equation}\label{e3.x2.1}
 L(x,q )=\sup_{\lz\ge0} \Big\{L_\lz(x,q)-\lz\Big\}.
 \end{equation}
 (ii) For every   $ t>0$ and $x,y\in \overline U$, we have that
  \begin{equation}\label{e3.x2.2}
  \cl_t(x ,y)\ge \sup_{\lz\ge0}\Big\{d_\lz(x ,y)-\lz t\Big\}.
  \end{equation}
 \end{lem}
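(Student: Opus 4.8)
The plan is to prove (i) by a direct Legendre-duality computation and to deduce (ii) by integrating the pointwise identity of (i) along admissible curves.

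\textbf{Part (i).} First I would check $L(x,q)\le\sup_{\lz\ge0}\{L_\lz(x,q)-\lz\}$. Fix $x\in\overline U$, $q\in\rn$, and let $p\in\rn$ be arbitrary. Put $\lz:=H(x,p)$, which is finite and nonnegative since $H$ is $[0,\fz)$-valued. Then $p$ belongs to the sublevel set $\{p'\in\rn:\ H(x,p')\le\lz\}$, so $p\cdot q\le L_\lz(x,q)$ by \eqref{e2.x0}; hence $p\cdot q-H(x,p)=p\cdot q-\lz\le L_\lz(x,q)-\lz\le\sup_{\mu\ge0}\{L_\mu(x,q)-\mu\}$. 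Taking the supremum over $p\in\rn$ gives this inequality. For the reverse inequality, fix $\lz\ge0$; the set $\{p'\in\rn:\ H(x,p')\le\lz\}$ is nonempty because $H(x,0)=0\le\lz$ by (A2)$_\weak$, and on this set $-H(x,p')\ge-\lz$, so
$$L(x,q)=\sup_{p'\in\rn}\{p'\cdot q-H(x,p')\}\ge\sup_{\{p':\,H(x,p')\le\lz\}}\{p'\cdot q-H(x,p')\}\ge\sup_{\{p':\,H(x,p')\le\lz\}}\{p'\cdot q\}-\lz=L_\lz(x,q)-\lz.$$
Taking the supremum over $\lz\ge0$ yields (i).

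\textbf{Part (ii).} Fix $t>0$, $x,y\in\overline U$ and $\lz\ge0$, and let $\gz\in\mathcal C(0,t;x,y;\overline U)$ be any admissible Lipschitz curve. Then $\gz'(\theta)$ exists for a.e.\ $\theta\in[0,t]$, and by (i),
$$L(\gz(\theta),\gz'(\theta))\ge L_\lz(\gz(\theta),\gz'(\theta))-\lz\qquad\text{for a.e.\ }\theta\in[0,t].$$
Integrating over $[0,t]$, and using that by the remark following \eqref{e2.x2} the quantity $d_\lz(x,y)$ can be computed with curves parametrized on $[0,t]$, I obtain
$$\int_0^tL(\gz(\theta),\gz'(\theta))\,d\theta\ge\ell_\lz(\gz)-\lz t\ge d_\lz(x,y)-\lz t.$$
Taking the infimum over $\gz\in\mathcal C(0,t;x,y;\overline U)$ gives $\cl_t(x,y)\ge d_\lz(x,y)-\lz t$, and then the supremum over $\lz\ge0$ gives (ii).

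\textbf{On the obstacles.} There is no serious difficulty here; the only points needing a word are that $L(x,\cdot)$ is finite-valued, which holds under \eqref{u-coercive} as recorded above, so that the integrals are finite, and that $\theta\mapsto L(\gz(\theta),\gz'(\theta))$ is measurable, which follows since $L$ is upper semicontinuous (hence Borel) and $\theta\mapsto(\gz(\theta),\gz'(\theta))$ is measurable. Both are routine, so the whole argument is short.
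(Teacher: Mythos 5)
Your proof is correct and follows essentially the same route as the paper: in (i) you split the double supremum into two inequalities that the paper compresses into a single chain of identities, and in (ii) you integrate the pointwise bound $L\ge L_\lz-\lz$ along admissible curves exactly as the paper does.
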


 \begin{proof}
 (i) follows directly from  the definitions of $L$ and $L_\lz$:
 \begin{eqnarray*}
L(x,q )&&=\sup_{p\in\rn}\Big\{p\cdot  q  -H(x,p)\Big\}
=\sup_{\lz\ge 0} \sup_{\{H(x,p)\le\lz\}}\Big\{p\cdot q-\lz\Big\}
=\sup_{\lz\ge0} \Big\{L_\lz(x,q)-\lz\Big\}.
\end{eqnarray*}
(ii) From (i), for every $\lz\ge 0$ and $\gz\in\mathcal C\big(0,t; x, y; \overline U\big)$
it holds
\begin{eqnarray*}
\int_0^t    L(\gz(\theta),\gz'(\theta) )
\,d\theta
 \geq  \int_0^t  \big(L_\lz(\gz(\theta), \gz'(\theta)) -\lz\big)\,d\theta
  \ge  d_\lz(x,y)-\lambda t.
\end{eqnarray*}
Taking infimum over $\gz\in \mathcal C\big(0, t; x, y; \overline U\big)$ and supermum
over $\lz\ge 0$,
this yields (\ref{e3.x2.2}).
 \end{proof}
Define  $M: \mathbb R_+\to \mathbb R_+$ by
\begin{equation}\label{e3.x3.0}
M(t)=\begin{cases} \displaystyle\sup_{\lz\ge0}\lf(r_\lz -\frac{\lz}{t}\r)  & t>0,\\
0 & t=0.
\end{cases}
\end{equation}
Then we have
\begin{lem}\label{l3.x3}
The function $M:\mathbb R_+\to\mathbb R_+$ is monotone increasing, and $\displaystyle
\lim_{t\rightarrow\infty}M(t)=\fz$.
There holds that
\begin{equation}\label{e3.x3.1}
L(x,q )\ge M(|q| ) |q|,  \ \forall\ x\in\overline U\  \mbox{and}\ q\in\rn,
\end{equation}
and hence
\begin{equation}\label{e3.x3.2}
 \cl_{t}(x,y) \geq M\left(\frac{d_U(x,y)}{t}\right)  {d_U(x,y)},
 \ \forall\ t>0, \ {\rm{and}}\ x,y \in\overline  U.
\end{equation}
\end{lem}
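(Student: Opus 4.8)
The plan is to prove the three assertions in order: first the monotonicity and divergence of $M$, then the lower bound \eqref{e3.x3.1} on $L$, and finally deduce \eqref{e3.x3.2} by integrating along curves. For the monotonicity of $M$, I would observe that for each fixed $\lz\ge 0$ the function $t\mapsto r_\lz-\lz/t$ is nondecreasing on $(0,\fz)$ (since $r_\lz\ge 0$ and $\lz\ge 0$), and a supremum of nondecreasing functions is nondecreasing; the value at $t=0$ being $0$ is consistent because each term tends to $-\fz$ (for $\lz>0$) or equals $r_\lz\le R_\lz$... actually one should just note $M(t)\ge r_\lz - \lz/t\to r_\lz$ as $t\to\fz$, so $\limsup_{t\to\fz}M(t)\ge \sup_{\lz\ge 0}r_\lz=+\fz$ by (A3) (since $r_\lz\to\fz$ as $\lz\to\fz$), and combined with monotonicity this gives $\lim_{t\to\fz}M(t)=\fz$. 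I should also check $M(t)\ge 0$ for all $t$: taking $\lz=0$ in the supremum gives $M(t)\ge r_0-0 = r_0 > 0$ (here $r_0$ is interpreted as a positive radius with $B(0,r_0)\subset\{H(x,\cdot)<0+\text{something}\}$ — more carefully, one uses that $\{H(x,p)<\lz\}$ for small $\lz>0$ contains a fixed ball, or simply that the supremum over $\lz\ge 0$ includes $\lz\to 0^+$, giving $M(t)\ge 0$).

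For \eqref{e3.x3.1}, the key identity is \eqref{e3.x2.1} from Lemma 3.4: $L(x,q)=\sup_{\lz\ge 0}\{L_\lz(x,q)-\lz\}$. From the proof of Lemma 3.3 we have $L_\lz(x,q)\ge r_\lz|q|$ for every $x\in\overline U$ (this is exactly the lower bound established there using $B(0,r_\lz)\subset\{H(x,\cdot)<\lz\}$). Hence
\[
L(x,q)\ge \sup_{\lz\ge 0}\{r_\lz|q|-\lz\} = |q|\sup_{\lz\ge 0}\Big\{r_\lz-\frac{\lz}{|q|}\Big\} = |q|\,M(|q|)
\]
for $q\ne 0$, and the inequality is trivial for $q=0$ since $L(x,0)=0=M(0)\cdot 0$. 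This gives \eqref{e3.x3.1}.

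For \eqref{e3.x3.2}, fix $t>0$ and $x,y\in\overline U$, and let $\gz\in\mathcal C(0,t;x,y;\overline U)$ be any Lipschitz curve. Using \eqref{e3.x3.1} pointwise,
\[
\int_0^t L(\gz(\theta),\gz'(\theta))\,d\theta \ge \int_0^t M(|\gz'(\theta)|)\,|\gz'(\theta)|\,d\theta.
\]
Now I would exploit that $s\mapsto M(s)s$ is convex — or at least that $s\mapsto M(s)$ is nondecreasing, so that $\theta\mapsto \Phi(\ell(\theta))$ type arguments apply — together with Jensen's inequality applied to the convex function $\Phi(s):=M(s)s$. Indeed, writing $\ell:=\int_0^t|\gz'(\theta)|\,d\theta\ge d_U(x,y)$ for the Euclidean length of $\gz$, and reparametrizing $\gz$ by constant speed so that $|\gz'|\equiv \ell/t$ (this does not change $\int_0^t M(|\gz'|)|\gz'|\,d\theta$ up to the convexity/Jensen step; more carefully, by Jensen applied to the convex $\Phi$ and the probability measure $d\theta/t$ one gets $\frac1t\int_0^t \Phi(|\gz'|)\,d\theta \ge \Phi\big(\frac1t\int_0^t|\gz'|\,d\theta\big) = \Phi(\ell/t)$), we obtain
\[
\int_0^t L(\gz(\theta),\gz'(\theta))\,d\theta \ge t\,\Phi\Big(\frac{\ell}{t}\Big) = M\Big(\frac{\ell}{t}\Big)\ell \ge M\Big(\frac{d_U(x,y)}{t}\Big) d_U(x,y),
\]
where the last step uses $\ell\ge d_U(x,y)$ and the monotonicity of $s\mapsto M(s)s$ (which follows from $M$ nondecreasing and $M\ge 0$). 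Taking the infimum over all such $\gz$ yields \eqref{e3.x3.2}.

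The main obstacle I anticipate is the convexity of $\Phi(s)=M(s)s$ needed to apply Jensen cleanly: $M$ is a supremum of affine functions of the variable $1/t$, not of $s$, so one must check directly that $\Phi(s)=\sup_{\lz\ge 0}\{r_\lz s-\lz\}$ is a supremum of affine (hence convex) functions of $s$ — which it manifestly is, with slopes $r_\lz$ and intercepts $-\lz$. So $\Phi$ is convex and nondecreasing on $[0,\fz)$, $\Phi(0)=0$, and the Jensen argument goes through without difficulty. An alternative that avoids Jensen entirely: since $\Phi$ is convex with $\Phi(0)=0$, it is superadditive-free but we have $\Phi(s)/s$ nondecreasing, i.e. $\Phi(s)\ge \frac{s}{\ell/t}\Phi(\ell/t)$ is false in general; so the Jensen route (or a direct chord estimate using $\Phi$ convex, $\Phi(0)=0$, giving $\Phi(\ell/t)\le \frac1t\int_0^t\Phi(|\gz'|)\,d\theta$) is the clean path. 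Everything else is routine.
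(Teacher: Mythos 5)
Your proof is correct, but for \eqref{e3.x3.2} you take a noticeably different route from the paper. The monotonicity/divergence of $M$ and the lower bound \eqref{e3.x3.1} agree with the paper essentially verbatim: the paper proves $L(x,q)=\sup_{\lz\ge0}\{L_\lz(x,q)-\lz\}\ge\sup_{\lz\ge 0}\{r_\lz|q|-\lz\}=M(|q|)|q|$ exactly as you do.

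For \eqref{e3.x3.2}, you integrate the pointwise bound $L\ge\Phi(|\gz'|)$ with $\Phi(s)=M(s)s=\sup_{\lz\ge0}\{r_\lz s-\lz\}$ along an arbitrary competitor curve and then apply Jensen to the convex $\Phi$ before taking the infimum over curves. This works — your observation that $\Phi$ is a supremum of affine functions of $s$, hence convex and nondecreasing with $\Phi(0)=0$, is exactly what's needed, and Jensen is genuinely required here since $\theta\mapsto|\gz'(\theta)|$ need not be constant. The paper sidesteps the integration and Jensen entirely: it invokes Lemma~\ref{l3.x2}(ii), namely $\cl_t(x,y)\ge\sup_{\lz\ge0}\{d_\lz(x,y)-\lz t\}$, together with $d_\lz(x,y)\ge r_\lz d_U(x,y)$ from Lemma~\ref{l2.w16}, and reads off $\cl_t(x,y)\ge\sup_{\lz\ge0}\{r_\lz d_U(x,y)-\lz t\}=M(d_U(x,y)/t)d_U(x,y)$ in one line. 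Both arguments are valid and both are short; the paper's is tighter because the curve-level optimization has already been done once and for all in the passage from $L_\lz$ to $d_\lz$, whereas your route redoes it via Jensen. Either way, the conclusion stands, and your self-diagnosed "obstacle" (verifying convexity of $\Phi$) is handled correctly. One small incidental remark: your aside that ``$\Phi(s)/s$ nondecreasing ... is false in general'' is actually backwards — $\Phi(s)/s=M(s)$ is nondecreasing — though you do not use this alternative, so it does not affect the proof.
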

\begin{proof} It is easy to see that $M(t_1)\le M(t_2)$ when $0\le t_1\le t_2$. Moreover,
$$\displaystyle M(t)\ge r_t-\frac{t}{t}=r_t-1\rightarrow +\infty\  {\rm{as}} \ t\rightarrow\infty.$$
For $x\in\overline U$ and $q\in\rn\setminus\{0\}$, it follows from (A3) and (\ref{u-coercive}) that
\begin{eqnarray*}
+\infty>L(x,q )
 =\sup_{\lz\ge0}  \big(L_\lz(x,q)   -\lz \big)
 \ge\sup_{\lz\ge0}\big(r_\lz|q|-\lz\big)=|q|\sup_{\lz\ge0}\Big(r_\lz-\frac{\lz}{|q|}\Big)=M(|q|)|q|.
\end{eqnarray*}
In particular, $M(|q|)<+\infty$ for $q\in\rn$.
%Since $L(z,q)<\fz$, we  have $M(t)<\fz$ for all $t>0$ by taking $q$ such that $|q|=t$.
%Thanks to $r_\lz\to\fz$ as $\lz\to\fz$, we obtain $M(t)\to\fz$ as $t\to\infty$.
For $t >0$ and $ x,y\in \overline U$, it follows from (\ref{e3.x2.2}) that
\begin{eqnarray*}
 \cl_t(x,y)
 \geq  \sup_{\lz\ge0}  \big(d_\lz (x,y) -\lz t\big)
 \ge \sup_{\lz\ge0} \big(r_\lz d_U(x,y)-\lambda t\big)=M\lf(\frac{d_U(x,y)}{t}\r)d_U(x,y).
\end{eqnarray*}
This implies (\ref{e3.x3.2}).
\end{proof}

Without loss of generality, we may assume that both $R_\lz$ and $r_\lz$
are increasing functions of $\lz\in\mathbb R_+$. From Lemma \ref{l3.x3},  for every $\lz>0$
there exists $a_\lz>0$ such that $M(a_\lz)\ge  R_\lz$,  and we may assume that
$a_\lz$ is an increasing function of $\lz\in\mathbb R_+$.

For $x\in\overline U$ and $p\in \rn$,
denote the subdifferential of $H(x,\cdot)$ at $p$ by
$$\partial_pH(x,p)=\Big\{q\in\rn\ \big|  \ H(x, p')\ge H(x, p)+q\cdot(p'-p), \ \forall\ p'\in\rn\Big\}.$$

Now we have
\begin{cor}\label{c3.x4} For any $\lz>0$ and $x\in\overline U$,
if $H(x,p)=\lz>0$ and $q\in\partial_p H(x,p)$,
then $$r_\lz\le |p|\le R_\lz,\quad  p\cdot q\ge\lz\quad \mbox{and}\quad   \frac{\lz}{R_\lz}\le  |q| \le  a_\lz.$$

\end{cor}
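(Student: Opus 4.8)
The plan is to verify the four claimed inequalities one at a time, using only (A3), the convexity and lower semicontinuity of $H(x,\cdot)$, the Legendre duality between $H$ and $L$, and Lemma \ref{l3.x3}. First I would pin down the size of $p$. The lower bound $|p|\ge r_\lz$ is immediate: if $|p|<r_\lz$ then by (A3) we would have $p\in B(0,r_\lz)\subset\{p'\in\rn\,|\,H(x,p')<\lz\}$, contradicting $H(x,p)=\lz$. For the upper bound I would use convexity of $H(x,\cdot)$ together with $H(x,0)=0$: for $0\le t<|p|$ the point $\tfrac{t}{|p|}p$ is a convex combination of $p$ and $0$, so $H(x,\tfrac{t}{|p|}p)\le\tfrac{t}{|p|}\lz<\lz$, whence by (A3) $\tfrac{t}{|p|}p\in B(0,R_\lz)$, i.e. $t<R_\lz$; letting $t\uparrow|p|$ gives $|p|\le R_\lz$.

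The inequality $p\cdot q\ge\lz$ then follows directly from the definition of $\partial_pH(x,p)$: testing $H(x,p')\ge H(x,p)+q\cdot(p'-p)$ at $p'=0$ and using $H(x,0)=0$ and $H(x,p)=\lz$ gives $0\ge\lz-p\cdot q$. Combining this with the bound $|p|\le R_\lz$ already obtained, $\lz\le p\cdot q\le|p|\,|q|\le R_\lz|q|$, which is exactly the lower bound $|q|\ge\lz/R_\lz$; in particular $q\ne 0$ since $\lz>0$.

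The one step that needs a little care is the upper bound $|q|\le a_\lz$. Here I would invoke Legendre reciprocity: $q\in\partial_pH(x,p)$ is equivalent to the Fenchel equality $L(x,q)+H(x,p)=p\cdot q$, so $L(x,q)=p\cdot q-\lz$. Using $|p|\le R_\lz$ from the first step gives $L(x,q)\le R_\lz|q|-\lz$, while \eqref{e3.x3.1} gives $L(x,q)\ge M(|q|)\,|q|$; dividing by $|q|>0$ yields $M(|q|)\le R_\lz-\tfrac{\lz}{|q|}<R_\lz$. Since $M$ is monotone increasing and $a_\lz$ was chosen with $M(a_\lz)\ge R_\lz$, the strict inequality $M(|q|)<R_\lz\le M(a_\lz)$ forces $|q|<a_\lz$. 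This is the only genuinely delicate point: one must pass from the subdifferential inclusion to the exact identity $L(x,q)=p\cdot q-\lz$ via Fenchel duality, and one must retain the term $-\lz/|q|$ (which is where $H(x,p)=\lz>0$ enters) in order to get a \emph{strict} inequality $M(|q|)<R_\lz$ — a non-strict inequality would not suffice to conclude $|q|\le a_\lz$ from monotonicity of $M$ alone. Elsewhere the argument is routine; the only other place convexity of $H(x,\cdot)$ is used beyond (A3) is the bound $|p|\le R_\lz$.
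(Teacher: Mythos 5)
Your proof is correct and follows the same overall strategy as the paper: (A3) for the bounds on $|p|$, the subdifferential inequality (tested at $p'=0$) for $p\cdot q\ge\lz$, then $|p|\le R_\lz$ to get $|q|\ge\lz/R_\lz$, and finally the Fenchel identity $L(x,q)=p\cdot q-\lz$ together with $L(x,q)\ge M(|q|)\,|q|$ for the upper bound on $|q|$. Two places where you are more careful than the published proof deserve mention. First, (A3) states $\{p'\,|\,H(x,p')<\lz\}\subset B(0,R_\lz)$ with a \emph{strict} inequality, so $H(x,p)=\lz$ does not directly yield $|p|\le R_\lz$ from (A3) alone; the scaling argument via convexity, evaluating $H(x,\tfrac{t}{|p|}p)\le\tfrac{t}{|p|}\lz<\lz$ for $t<|p|$ and letting $t\uparrow|p|$, is exactly what is needed, and the paper omits this step. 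Second, the paper's chain $M(|q|)|q|\le L(x,q)\le p\cdot q\le R_\lz|q|$ only records $M(|q|)\le R_\lz$; since $a_\lz$ is merely chosen so that $M(a_\lz)\ge R_\lz$ and $M$ is only known to be monotone nondecreasing (not strictly increasing), the non-strict inequality $M(|q|)\le R_\lz\le M(a_\lz)$ does not by itself give $|q|\le a_\lz$. Your version retains the $-\lz/|q|$ term (i.e.\ uses $L(x,q)=p\cdot q-\lz$ rather than $L(x,q)\le p\cdot q$) to get the strict bound $M(|q|)<R_\lz$, which is what genuinely closes the argument via monotonicity of $M$. So the proposal is not only correct but slightly tighter than the paper's own write-up at these two points.
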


\begin{proof}
Since $H(x,p)=\lz$, it follows from (A3) that
$$r_\lz\le |p|\le R_\lz.$$
Since $q\in\partial_pH(x,p)$, it follows that
\begin{equation}\label{e3.x4.1}
\lz\le \lz+L(x,q)=H(x,p)+L(x,q)=p\cdot q.
\end{equation}

Thus we have that
$|q|\ge \frac{\lz}{R_\lz}$.
By (\ref{e3.x4.1}), we have that
$$M(|q|)|q|\le L(x,q)\le p\cdot q\le |p||q|\le R_\lambda |q|.$$
This implies that $M(|q|)\le R_\lambda$ and hence $|q|\le a_\lambda$.
\end{proof}

\begin{lem}\label{l3.x5}

(i) For each $t>0$, $\cl_t(\cdot,\cdot)$ is locally bounded on $U\times U$.\\
(ii) For each pair of points $z,x\in U$,  the map $t\mapsto\cl_t(z,x)$ is monotone
decreasing on $\mathbb R_+$  and $\displaystyle\lim_{s\uparrow t^-}\cl_s(z,x)=\cl_t(z,x)$ for all $t>0$.\\
(iii) For each   $z \in U$ and $t>0$, the map $x\mapsto\cl_t(z,x)$ is upper semicontinuous on $\overline U$.
\end{lem}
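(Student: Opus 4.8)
The plan is to establish the three assertions in order, since part~(iii) will be deduced from the monotonicity and left-continuity in part~(ii), and part~(ii) relies only on the elementary curve constructions (concatenation, constant extension, linear rescaling) already used in this section. I will repeatedly use that, under the standing hypotheses of this section, $L(x,q)<\fz$ for every $(x,q)\in\overline U\times\rn$, that $L(x,\cdot)$ is convex and hence continuous on $\rn$, and that $L$, being upper semicontinuous on the compact set $\overline U\times\overline{B(0,R)}$, is bounded there for every $R>0$.

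For (i), the estimate $\cl_t(x,y)\ge 0$ is immediate from $L\ge 0$, so it remains to bound $\cl_t$ from above near an arbitrary point $(x_0,y_0)\in U\times U$. Since $U$ is open and connected I would fix a Lipschitz curve $\gz_0$ joining $x_0$ to $y_0$ inside $U$, choose $r>0$ small enough that the closed $r$-neighborhood $K'$ of the image of $\gz_0$ is a compact subset of $U$ containing $B(x_0,r)\cup B(y_0,r)$, and then, for $x\in B(x_0,r)$ and $y\in B(y_0,r)$, concatenate the segments $[x,x_0]$ and $[y_0,y]$ with $\gz_0$ into a Lipschitz curve in $K'$ of length at most $D:=2r+\ell(\gz_0)$. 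Running this curve over $[0,t]$ at constant speed $\le D/t$ gives $\cl_t(x,y)\le t\,\max_{K'\times\overline{B(0,D/t)}}L<\fz$ for all $(x,y)\in B(x_0,r)\times B(y_0,r)$, which proves (i).

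For (ii), the monotonicity follows by extending a Lipschitz curve that is near-optimal for $\cl_s(z,x)$ to $[0,t]$ by holding it equal to $x$ on $[s,t]$: since $L(x,0)=0$ this adds nothing to the action, so $\cl_t(z,x)\le\cl_s(z,x)$ for $0\le s<t$. For the left-continuity, monotonicity already gives $\lim_{s\uparrow t}\cl_s(z,x)\ge\cl_t(z,x)$, so only the reverse inequality is needed. I would fix $\ez>0$ and a Lipschitz curve $\gz:[0,t]\to\overline U$ from $z$ to $x$, with Lipschitz constant $C$, satisfying $\int_0^tL(\gz(\theta),\gz'(\theta))\,d\theta\le\cl_t(z,x)+\ez$ (such $\gz$ exists since $\cl_t(z,x)<\fz$ by (i)), and rescale it to $\gz_s(\theta):=\gz(\frac{t}{s}\theta)$ on $[0,s]$; its velocity $\frac{t}{s}\gz'(\frac{t}{s}\theta)$ is bounded by $2C$ once $s>t/2$, and a change of variables shows that the action of $\gz_s$ equals $\frac{s}{t}\int_0^tL\big(\gz(u),\frac{t}{s}\gz'(u)\big)\,du$. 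As $s\uparrow t$ the integrand converges almost everywhere to $L(\gz(u),\gz'(u))$ by continuity of $L(\gz(u),\cdot)$ and is dominated by the finite constant $\max_{\overline U\times\overline{B(0,2C)}}L$, so dominated convergence yields $\limsup_{s\uparrow t}\cl_s(z,x)\le\int_0^tL(\gz,\gz')\,d\theta\le\cl_t(z,x)+\ez$; letting $\ez\downarrow 0$ completes (ii).

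For (iii), I would combine (ii) with the concatenation inequality $\cl_t(z,x)\le\cl_{t-\rho}(z,x_0)+\cl_\rho(x_0,x)$ for $0<\rho<t$, obtained by gluing near-optimal curves. When $x_0\in U$ and $x\to x_0$, running the straight segment from $x_0$ to $x$ over time $\rho$ bounds $\cl_\rho(x_0,x)\le\rho\sup\{L(w,q):w\in\overline U,\ |q|\le|x-x_0|/\rho\}$, and for fixed $\rho$ this supremum tends to $0$ as $|x-x_0|\to 0$, since $L\ge 0$, $L(\cdot,0)\equiv 0$, and $L$ is upper semicontinuous on the compact set $\overline U\times\{0\}$. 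Hence $\limsup_{x\to x_0}\cl_t(z,x)\le\cl_{t-\rho}(z,x_0)$, and letting $\rho\downarrow 0$ and invoking the left-continuity from (ii) gives $\limsup_{x\to x_0}\cl_t(z,x)\le\cl_t(z,x_0)$. For $x_0\in\partial U$ the inequality is trivial when $\cl_t(z,x_0)=\fz$, and otherwise follows by the same argument, using that the rescaling in (ii) in fact gives left-continuity of $s\mapsto\cl_s(z,x_0)$ whenever $\cl_t(z,x_0)<\fz$, together with $d_U(x_0,\cdot)$ being continuous at $x_0$. I expect this left-continuity in (ii) to be the main point requiring care: one must ensure the near-optimal competitor is genuinely Lipschitz so that its rescalings have uniformly bounded velocity and dominated convergence applies with a single finite dominating constant from the upper semicontinuity of $L$ on a fixed compact cylinder, and the argument uses essentially that $L(x,\cdot)$ is finite---hence continuous---on $\rn$, which is exactly where the uniform super-linearity \eqref{u-coercive} enters. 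Parts (i) and (iii) are then comparatively routine.
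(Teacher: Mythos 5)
Your proposal is correct and follows essentially the same route as the paper: local boundedness via rescaled connecting curves, monotonicity and left-continuity of $t\mapsto\cl_t(z,x)$ via direct curve manipulations, and upper semicontinuity in $x$ deduced from left-continuity together with the concatenation inequality and the estimate $\cl_\rho(x_0,x)\le\rho\sup\{L(w,q):w\in\overline U,\,|q|\le|x-x_0|/\rho\}\to 0$.

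There are a few technical choices that differ from the paper, and it is worth recording them. For the monotonicity you hold the near-optimal curve constant at $x$ on $[s,t]$, exploiting $L(\cdot,0)\equiv 0$, whereas the paper rescales $\tilde\gz(\theta)=\gz(\theta s/t)$ and uses convexity of $L(w,\cdot)$ together with $L(w,0)=0$; these are equivalent in the hypotheses they consume and the paper's version is no simpler. For left-continuity you pass to the limit by dominated convergence, using that the rescaled integrand is bounded by the finite maximum of the USC function $L$ on the compact cylinder $\overline U\times\overline{B(0,2C)}$ and converges pointwise because a finite convex function $L(w,\cdot)$ is continuous; the paper instead runs a covering/uniform upper semicontinuity argument on the compact set $\gz([0,t])\times\gz'([0,t])$. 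Both are correct; the DCT version is a bit cleaner. For (iii) you prove $\limsup_{x\to x_0}\cl_t(z,x)\le\cl_{t-\rho}(z,x_0)$ and send $\rho\downarrow 0$ using left-continuity, while the paper phrases the same estimates as a proof by contradiction; the key ingredient, namely that $\sup_{w\in\overline U,\,|q|\le r}L(w,q)\to 0$ as $r\to 0$ (by compactness and USC of $L$ together with $L(\cdot,0)=0$), is shared. These are stylistic rather than substantive deviations; the content of the argument is the same as in the paper.
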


\begin{proof} We divide the proof into several steps.
\\
\noindent{\it Step 1}.   First we want to show that for each pair of points $z,x\in U$,
the map $t\mapsto\cl_t(z,x)$ is monotone decreasing on $\mathbb R_+$. In fact,
for every $s<t$ and $\gz\in \mathcal C\big(0,s; z,x; \overline U\big)$,
set $\wz \gz(\theta)=\gz(\frac{\theta s}{t})$ for $\theta\in[0,t]$. Then
$\wz\gz\in \mathcal C\big(0,t; z,x; \overline U\big)$. Since
$L(x,\cdot)$ is convex, we obtain that
\begin{eqnarray*}
\int_0^sL(\gz(\theta),\gz'(\theta))\,d\theta&&
= \int_0^tL\big(\wz\gz(\theta  ),\frac{t}{s}\wz \gz'(\theta)\big)\frac{s}{t}\,d\theta\\
&&\ge \int_0^tL(\wz\gz(\theta  ),\wz \gz'(\theta  ) )  \,d\theta\ge \cl_t(z,x),
\end{eqnarray*}
this implies that  $\cl_s(z,x)\ge  \cl_t(z,x)$ by taking infimum over all
$\gz\in\mathcal C\big(0,s; z,x; \overline U\big)$.

{\noindent\it Step 2.}  For each $t>0, b>0$, and $z,x\in \overline U$ with $c:=d_U(z,x)\le b$,
there exists  $\gz\in\mathcal C\big (0,c; z, x; \overline U\big)$ such that
$|\gz'(s)|=1$ for a.e. $s\in [0, c]$.
Set $\wz \gz(s)=\gz\big(\frac{cs}t\big)$ for $s\in[0,\,t]$.
Then $\wz\gz\in \mathcal C\big(0, t; z, x; \overline U\big)$,
and $|\wz \gz'(s)|=\frac{c}t$ for a.e. $s\in[0,\,t]$.
Therefore, we obtain that
$$\cl_t(z,x)\le\int_0^t L\big(\wz\gz(s),\wz\gz'(s)\big)\,ds\le
t C(b,t),$$
where $$C(b,t):=\sup_{y\in \overline U}\sup_{|q|\le b}L\big(y,\frac{q}t\big)<+\infty.$$
This shows that
$\cl_t(\cdot,\cdot)$ is bounded on
$\big\{(z,x)\in \overline U\times\overline U\ | \ d_U(z,x)\le b\big\}$ and hence (i) holds.
\\
{\noindent\it Step 3.} For each pair of $z,x\in U$ and $t>0$,  since $\cl_s(z,x)$ is monotone decreasing
w.r.t. $s>0$,  for (ii) it suffices to show that for any $\epsilon>0$ there exists $\delta=\delta(\epsilon)>0$
such that
$$\cl_s(z,x)\le \cl_t(z,x)+ \ez,\ \forall \ s\in (t-\dz, t).$$
Let $\gz\in \mathcal C\big(0,t; z,x; \overline U\big)$ be such that
 $$\cl_t(z,x)\ge \int_0^t L(\gz (\theta),\gz '(\theta))\,d\theta-\frac{\ez}2.$$
 It is not hard to see that
 $$\cl_s(z,x)\le \int_0^t L\big(\gz (\theta),\frac{t}{s}\gz '(\theta)\big)\frac{s}{t}\,d\theta.$$
 Since   $\gz '\in L^\infty([0,t])$ and $L(w,q)$ is upper semicontinuous in $\overline U\times\rn$,
 we can find $\dz(w,q)>0$ such that
 $$L\big(\wz w,\frac{\wz qt}{s}\big)\frac{s}{t}\le L( w,q) +\frac{\ez}{8t}$$
whenever $s\in (t-\delta(w,q), t)$,
$|\wz w-w|\le\dz(w,q)$, and $\displaystyle|\wz q-q|\le \frac{2\dz(w,q)}{t} \|\gz'\|_{L^\infty([0,t])}$.

Since $\gz([0,1])\times \gamma'([0,t])\subset\overline U\times\rn$
is compact, by a simple covering argument we can find a $\dz>0$
such that for any $s\in (t-\delta, t)$, it holds that
$$L\big(\gz (\theta),\frac{t}{s}\gz_t'(\theta)\big)\frac{s}{t}\le L(\gz(\theta),\gz '(\theta) )
+\frac{\ez}{8t}, \ {\rm{a.e.}}\ \theta\in[0,\,t].$$
This implies that $\cl_s(z,x)\le \cl_t(z,x)+ \ez$ whenever $s\in (t-\delta, t)$. Hence (ii) holds.\\
{\noindent\it Step 4.} For $t>0$ and $z\in U$, we argue by
contradiction that $\cl_t(z,\cdot)$ is upper semicontinuous in $\overline U$:
For, otherwise, there exist $\ez_0>0$ and $\{x_i\}, \{x_0\}\subset\overline U$,
with $x_i\rightarrow x_0$, such that
$$\cl_t(z,x_i)\ge \cl_t(z,x_0)+\ez_0.$$
Since there exists $\delta_0>0$
such that  $\cl_t(z,x_0)+\ez_0\ge \cl_{t-\dz_0}(z,x_0)-\frac{\ez_0}{2}$,
we have that
 $$\cl_t(z,x_i)\ge\cl_{t-\dz_0} (z,x_0)+\frac{\ez_0}2.$$
Since $\cl_{t }(z,x_i)\le \cl_{t-\dz_0}(z,x_0)+\cl_{\dz_0}(x_0,x_i)$,
and
$$\cl_{\dz_0}(x_0,x_i)\le \delta_0\sup_{z\in\overline U}
\sup_{|q|\le d_U(x_i,x_0)}L(z,\frac{q}{\dz_0})\le
\frac{\ez_0}4$$
provided  $i$ is sufficiently large. Thus we obtain that
 $$\cl_{t-\dz_0}(z,x_0)\le\cl_{t-\dz_0} (z,x_0)-\frac{\ez_0}4,$$
 this is impossible.
\end{proof}

\subsection{A geometric structure theorem}

For a family of sets $\displaystyle\{F^t \}_{t\ge 0}$, we set, for $t>0$,
$$F^{< t }:=\bigcup_{0\le s< t} F^s; \ F^{\le t }:=\bigcup_{0\le s\le t} F^s;
\ F^{>t}:=\bigcup_{s>t} F^s; \ {\rm{and}}\ F^{\ge t }:=\bigcup_{s\ge t} F^s.$$
%Below, we always  set $E^0_\lz(x)=\{x\}$ and $E^\fz_\lz(x)=\{y\in\overline U: d_U(x,y)=\fz\}$ for all $x\in U$ and $\lz>0$.
Our main theorem on the geometric  structure of $\cl_t(\cdot,\cdot)$ is the following.
\begin{thm}\label{t3.x6}
For each $\lz>0$ and $x\in U$, there exists a family  $\big\{E^t_\lz(x) \big\}_{t\ge 0}$ of subsets of $\overline U$,
with $E_\lambda^0(x)=\{x\}$ and $E_\lambda^\infty(x)=\big\{y\in\overline U:\ d_U(x,y)=\infty\big\}$,
satisfying the following properties:\\
(i) For each $t\in[0,\,\fz)$ and $y\in E^t_\lz(x)$, it holds that
\begin{equation}
\label{e3.x2.3}\cl_t(x,y)=d_\lz(x,y)-\lz t.
\end{equation}
(ii) $U\subset E^{<\fz}_\lz(x)$, and for each $t\in[0,\,\fz)$, we have that
\begin{equation}\label{e3.x2.4}
B_{d_\lz}(x,\lz t)\subset E^{<t}_\lz(x)\subset \overline{B_{d_\lz}(x, a_\lz R_\lz t)}.
\end{equation}
(iii) For each $t\in (0,\,\fz)$,  $E^{t }_\lz(x)$ is closed, $E^{<t}_\lz(x)$ is  relatively open in $\overline U$,  and the relative boundary of $  E^{<t}_\lz(x)$ is contained in $E^{t }_\lz(x)$.\\
(iv) For all $s,t\in[0,\,\fz)$   and  every $z\in E_\lz^{t+s}(x)$,  there exists a point $y\in E_\lz^{t }(x)$
such that $z\in E_\lz^s(y)$, and
$$d_\lz(x,z)=d_\lz(x,y)+d_\lz(y,z).$$
\end{thm}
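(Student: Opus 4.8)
The plan is to construct the family $\{E^t_\lz(x)\}_{t\ge 0}$ directly from the pseudo-distance $d_\lz$ and the action function $\cl_t$ by a level-set comparison, guided by the sharp lower bound $\cl_t(x,y)\ge d_\lz(x,y)-\lz t$ of Lemma \ref{l3.x2}(ii). The natural candidate is the \emph{equality set}
$$E^t_\lz(x):=\Big\{y\in\overline U\ \big|\ \cl_t(x,y)=d_\lz(x,y)-\lz t\Big\}\cup\{x\}\ \text{(for $t=0$ just $\{x\}$)},$$
so that (i) holds by definition. First I would record the easy endpoint values: $E^0_\lz(x)=\{x\}$ because $\cl_0(x,y)=+\infty$ for $y\ne x$, while the characterization of $E^\fz_\lz(x)$ uses \eqref{e3.x1}, which forces $d_\lz(x,y)<\fz\iff d_U(x,y)<\fz$, together with the growth estimate \eqref{e3.x3.2} of Lemma \ref{l3.x3} (if $d_U(x,y)<\fz$ then $\cl_t(x,y)<\fz$ for $t$ large and the sup over $\lz$ in \eqref{e3.x2.2} must be attained at a finite scale, giving equality for large $t$; if $d_U(x,y)=\fz$ both sides are $+\fz$). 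For (i) itself, the content is that the set is nonempty for every $t$ and that the equality $d_\lz(x,z)=d_\lz(x,y)+d_\lz(y,z)$ in (iv) does hold — i.e. that $E^t_\lz(x)$ consists of points reached by a $d_\lz$-geodesic segment traversed at the "critical speed", which is exactly the curve along which the infimum defining $\cl_t$ is achieved.

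The core of the argument is (iv), the \emph{concatenation / dynamic-programming} property, from which (ii) and (iii) will follow. I would prove (iv) by a minimizing-curve analysis: fix $z\in E^{t+s}_\lz(x)$, take a curve $\gz:[0,t+s]\to\overline U$ nearly realizing $\cl_{t+s}(x,z)$ (using uniform super-linearity \eqref{u-coercive}/\eqref{u-coercive1} to get compactness of minimizing sequences and an actual minimizer, or an approximate one with a limiting argument as in Lemma \ref{l3.x5}), set $y:=\gz(t)$, and use the pointwise identity $L(w,q)\ge L_\lz(w,q)-\lz$ from \eqref{e3.x2.1}: integrating over $[0,t]$ and $[t,t+s]$ separately gives
$$\cl_t(x,y)+\cl_s(y,z)\ \le\ \int_0^{t+s}L(\gz,\gz')\,d\theta\ =\ \cl_{t+s}(x,z)\ =\ d_\lz(x,z)-\lz(t+s),$$
while Lemma \ref{l3.x2}(ii) applied to each piece gives $\cl_t(x,y)\ge d_\lz(x,y)-\lz t$ and $\cl_s(y,z)\ge d_\lz(y,z)-\lz s$, and the triangle inequality gives $d_\lz(x,y)+d_\lz(y,z)\ge d_\lz(x,z)$. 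Adding the last two lines pins down equality throughout, which simultaneously yields $y\in E^t_\lz(x)$, $z\in E^s_\lz(y)$, and the additivity of $d_\lz$ along $x,y,z$. The slope bounds packaged in Corollary \ref{c3.x4} (that on the level set $H(x,p)=\lz$ one has $r_\lz\le|p|\le R_\lz$ and $|q|\le a_\lz$ for $q\in\partial_pH$) are what controls the speed $|\gz'|$ of the optimal curve and thereby feed the two-sided $d_\lz$-ball bounds in \eqref{e3.x2.4}.

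For (ii): the inclusion $B_{d_\lz}(x,\lz t)\subset E^{<t}_\lz(x)$ says that if $d_\lz(x,y)<\lz t$ then equality in \eqref{e3.x2.3} is achieved at some time $s<t$; I would prove it by taking a $d_\lz$-nearly-geodesic from $x$ to $y$, reparametrizing it to run at the speed dictated by the subgradient (Corollary \ref{c3.x4}), computing that its $L$-action equals its $L_\lz$-length minus $\lz\cdot(\text{duration})$ with duration $<t$, and invoking monotonicity of $s\mapsto\cl_s$ (Lemma \ref{l3.x5}(ii)); the outer inclusion $E^{<t}_\lz(x)\subset\overline{B_{d_\lz}(x,a_\lz R_\lz t)}$ follows from bounding $d_\lz(x,y)=\cl_s(x,y)+\lz s$ using the length of the optimal curve and the speed cap $|\gz'|\le a_\lz$ together with $L_\lz$-length $\le R_\lz\cdot(\text{Euclidean length})$. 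That $U\subset E^{<\fz}_\lz(x)$ is then immediate from the first inclusion since $d_\lz(x,y)<\fz$ on $U$. For (iii): closedness of $E^t_\lz(x)$ comes from upper semicontinuity of $y\mapsto\cl_t(x,y)$ (Lemma \ref{l3.x5}(iii)) combined with lower semicontinuity of $y\mapsto d_\lz(x,y)$ (a standard consequence of the definition \eqref{e2.x2} via lower semicontinuity of the length functional — provable from Fatou plus l.s.c. of $L_\lz$ in $q$, itself a sup of linear functions); relative openness of $E^{<t}_\lz(x)$ and the boundary containment are then obtained by combining (ii) and (iv) — a point in $E^{<t}_\lz(x)$ lies in some $E^s_\lz(x)$ with $s<t$, and by (iv) every $d_\lz$-ball around it of radius $<\lz(t-s)$ is swept into $E^{<t}_\lz(x)$, while a boundary point must have the equality \eqref{e3.x2.3} hold in the limit at time exactly $t$, i.e. lie in $E^t_\lz(x)$.

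The main obstacle I expect is the existence and regularity of the optimal curve used in step (iv): under only (A1),(A2)$_\weak$,(A3) and \eqref{u-coercive} the Lagrangian $L(x,q)$ is merely upper semicontinuous (not continuous, not coercive-lower-semicontinuous in the usual Tonelli sense), so the direct method does not apply verbatim and one cannot simply cite a standard existence-of-minimizers theorem. The workaround is to argue with approximate minimizers and pass to the limit carefully — exactly the style of estimate already used in the proof of Lemma \ref{l3.x5}, where upper semicontinuity of $L$ is handled by covering/compactness arguments — and to extract the needed additivity from the sandwich of inequalities above, which only requires the pointwise bound \eqref{e3.x2.1} and the semicontinuity/monotonicity facts of Lemma \ref{l3.x5} rather than a genuine minimizer. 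Making this limiting argument produce a point $y$ that actually lies in $E^t_\lz(x)$ (and not just approximately) is the delicate part; once it is in place, properties (i)–(iii) are comparatively routine bookkeeping with \eqref{e3.x1}, \eqref{e3.x3.2}, Corollary \ref{c3.x4}, and Lemma \ref{l3.x5}.
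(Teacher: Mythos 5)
The key discrepancy is in the definition. You take $E^t_\lz(x)$ to be the \emph{equality set} $\{y:\cl_t(x,y)=d_\lz(x,y)-\lz t\}$, which makes (i) tautological but shifts all the weight onto (ii), (iii), (iv). The paper instead \emph{constructs} $E^t_\lz(x)$ explicitly as $\big\{\gz(\mathcal D_\gz^{-1}(t)):\gz\in\Gamma_\lz(x)\big\}$, where $\Gamma_\lz(x)$ is the set of $d_\lz$-geodesic rays from $x$ and $\mathcal D_\gz(\theta)=\int_0^\theta C_\gz(s)^{-1}\,ds$ with $C_\gz$ produced by Lemma \ref{l3.x7}. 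With that definition, (i) is a direct computation using the subgradient inequality and the sandwich from Lemma \ref{l3.x2}(ii); (ii) and (iv) are immediate from the explicit parametrization (in (iv) one simply sets $y=\gz(\mathcal D_\gz^{-1}(t))$ along the \emph{same} ray carrying $z$, so additivity of $d_\lz$ is free); and the only real work is closedness in (iii), where the two-sided bounds $\lz\le C_\gz\le a_\lz R_\lz$ give uniformly Lipschitz reparametrized curves, enabling the $\liminf_i\cl_{t_i}(x,x_i)\ge\cl_{t_\fz}(x,x_\fz)$ estimate by concatenation and uniform upper semicontinuity of $L$ on a compact set.

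Your sketch has a genuine gap exactly at the point you flag as ``delicate,'' and the specific arguments you offer to plug it do not work. For (iii): $E^t_\lz(x)=\{y:\cl_t(x,y)-d_\lz(x,y)\le -\lz t\}$ is a \emph{sub}level set of the u.s.c.\ function $\cl_t(x,\cdot)-d_\lz(x,\cdot)$ (note $d_\lz(x,\cdot)$ is actually Lipschitz by Lemma \ref{l2.w16}, so the $-d_\lz$ part is continuous); sublevel sets of u.s.c.\ functions are not closed in general, so ``u.s.c.\ of $\cl_t$ plus l.s.c.\ of $d_\lz$'' does not yield closedness. What is needed is a form of \emph{lower} semicontinuity of $y\mapsto\cl_t(x,y)$ along the sequences arising, and that is precisely what the paper's equi-Lipschitz geodesic structure delivers. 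The same issue bites (iv): your dynamic-programming argument produces a family $y_\epsilon=\gz_\epsilon(t)$ with $\cl_t(x,y_\epsilon)-\big(d_\lz(x,y_\epsilon)-\lz t\big)\le\epsilon$, and after extracting a limit $y$, u.s.c.\ gives $\cl_t(x,y)\ge d_\lz(x,y)-\lz t$ (which is trivially true), not the reverse inequality you need to place $y$ in the equality set. Finally, the first inclusion $B_{d_\lz}(x,\lz t)\subset E^{<t}_\lz(x)$ (and hence $U\subset E^{<\fz}_\lz(x)$) is in fact what requires the full geodesic-ray-plus-reparametrization machinery; once you carry that out you have effectively reproduced the paper's definition, at which point the equality-set packaging buys nothing. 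The fix is to adopt the paper's explicit construction up front and deduce (i) as an inequality rather than defining $E^t_\lz$ by it.
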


\bigskip
To prove Theorem \ref{t3.x6},  we first need to establish Lemma \ref{l3.x7} below.
Before stating it, we first point out that $(\overline U, d_\lambda)$ enjoys the length space property:\\
{\it For any $\lambda>0$ and $x,y\in\overline U$ with $d_\lz(x,y)<\fz$,
there exists at least one $\gamma\in \mathcal C(a, b; x, y; \overline U)$ such that
\begin{equation}\label{e3.length}
d_\lambda(x,y)=\int_a^b L_\lambda(\gamma(t), \gamma'(t))\,dt.
\end{equation}}
In fact, let $\big\{\gamma_i\big\}\subset \mathcal C(a, b; x, y; \overline U)$ be a minimizing sequence
for $d_\lambda(x,y)$, i.e.,
$$\lim_{i\rightarrow\infty}\ell_\lz(\gamma_i)=d_\lz(x,y).$$
After a possible change of parametrization, we can always assume that, for all $i\ge 1$,
\begin{equation} \label{e3.c.speed}
L_\lambda(\gamma_i(t),\gamma_i'(t)) =\frac{\ell_\lz(\gz_i)}{b-a}, \ {\rm{a.e.}}\ t\in [a,b].
\end{equation}
Combining (\ref{e3.c.speed}) with Lemma \ref{l2.w16} yields that, for all $i\ge 1$,
\begin{equation}\label{e3.lip}
|\gamma_i'(t)|\le \frac{\ell_\lz(\gz_i)}{(b-a)r_\lz}\Big(\le \frac{1+d_\lz(x,y)}{(b-a)r_\lz}\Big), \ {\rm{a.e.}}\ t\in [a,b].
\end{equation}
Therefore we may assume that there exists $\gamma\in \mathcal C(a, b; x, y; \overline U)$
such that after passing to a subsequence, $\gamma_i\rightarrow \gamma$ in $C([a,b])$ and
$\gamma_i'\rightarrow \gamma'$ weak$^*$ in $L^\fz([a,b])$. It follows directly from
the lower semicontinuity that $d_\lz(x,y)=\ell_\lz(\gamma)$.

With this length space property of $d_\lz$, we  can introduce
the notion of $d_\lz$-length minimizing geodesic rays.
A Lipschitz curve $\gz:[0,\,b]\to \overline U $, $b\in(0,\fz]$,
is a $d_\lz$-length minimizing geodesic curve, if
\begin{equation}\label{e3.geodesic}
d_\lz(\gz(s),\gz(t))=\int_s^t L_\lz(\gamma(\theta), \gamma'(\theta))\,d\theta=t-s, \ \forall\ 0\le s<t\le b.
\end{equation}
In particular, if $\gz:[0,\,b]\to \overline U $ is a $d_\lz$-length minimizing geodesic curve
then
$$\ell_\lz(\gz)=\begin{cases}b &\ {\rm{if}}\ b<+\infty\\
+\infty & \ {\rm{if}}\ b=+\infty.
\end{cases}
$$
%Below for simple of the notation,
%we always write $\gz:[0,\,\ell_\lz(\gz)]\to \overline U $  whenever $\gz$ is a $d_\lz$-length minimizing geodesic curve.

Given a Lipschitz curve $\gz:[0,\,b)\to \overline U $, $b\in[0,\fz)$,
if the restriction of $\gz$ on  each subinterval $[0,b']\subset [0,b)$ is $d_\lz$-length minimizing,
then $\gz(b)=\displaystyle\lim_{\ez\downarrow 0 }\gz(b-\ez)\in\overline U$ exists and
 $\gz$ can be extended to a $d_\lz$-length minimizing geodesic curve $\gz:[0,\,b]\to \overline U $.
Since $(U, d_\lz)$ enjoys the length space property,  the union of the images of all $d_\lz$-length minimizing geodesic curve with $\gz(0)=x$ equals to $U$.

Given $x\in U$ and $\lz>0$, we call a $d_\lz$-length minimizing geodesic curve
$\gz:[0,\,\ell_\lz(\gz)]\to \overline U$, with $\gz(0)=x$,
as a $d_\lz$-length minimizing geodesic ray starting at $x$, if either $\ell_\lz(\gz)=\fz$, or $\ell_\lz(\gz)<\fz$
and $\gz$ cannot be extended to a $d_\lz$-length minimizing geodesic curve
  $\wz \gz:[0,\,\ell_\lz(\gz)+\ez)\to \overline U$  for any $\ez>0$ (i.e., $\wz \gz(s)=\gz(s)$ for $s\in [0,\ell_\lz(\gz)])$.
We denote by $\Gamma_\lz(x )$  the collection of all $d_\lz$-length minimizing geodesic rays $\gz$  starting at $x$.
Since each $d_\lz$-length minimizing geodesic curve starting at $x$ is
contained in a $d_\lz$-length minimizing geodesic ray $\gz$  starting at $x$,
 the union of the images of all curves in $\Gamma_\lz(x)$  equals to $U$.

\begin{lem}\label{l3.x7} For $\lz>0$, $x\in U$, and $\gz\in\Gamma_\lz(x)$, there exist
 $C_\gz:[0,\,\ell_\lz(\gz)]\to \mathbb R_+$ and $p_{\gz}:[0,\,\ell_\lz(\gz)]\to\rn$ such that for a.e.
 $\theta\in [0,\,\ell_\lz(\gz)] $,
 it holds that\\
(i) $H(\gz(\theta),p_\gz(\theta))=\lz$ and   $C_\gz(\theta)\gz'(\theta)\in \partial_pH(\gz(\theta),p_\gz(\theta))$.
\\
(ii) $r_\lz\le |p_{\gz}(\theta)|\le R_\lz$ and $\lz\le C_\gz(\theta)\le  R_\lz a_\lz$.
\end{lem}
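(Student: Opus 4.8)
The plan is to extract, along a $d_\lz$-length minimizing geodesic ray $\gz\in\Gamma_\lz(x)$, a momentum field $p_\gz$ and a reparametrization factor $C_\gz$ from the Euler--Lagrange/duality structure of the variational problem defining $d_\lz$. First I would use the length space property \eqref{e3.length} together with \eqref{e3.geodesic}: since $\gz$ is $d_\lz$-minimizing, for a.e.\ $\theta$ the velocity $\gz'(\theta)$ realizes $L_\lz(\gz(\theta),\gz'(\theta))=1$, and $\gz$ restricted to any subinterval minimizes $\ell_\lz$ with fixed endpoints. The key observation is that $L_\lz(x,q)=\sup_{\{H(x,p)\le\lz\}}p\cdot q$ is the support function of the convex body $K_\lz(x):=\{p:H(x,p)\le\lz\}$, so for each $(x,q)$ with $q\neq 0$ there is a (boundary) point $p\in\partial K_\lz(x)$ attaining the supremum, i.e.\ $p\cdot q=L_\lz(x,q)$ and $q$ is an outer normal to $K_\lz(x)$ at $p$; by (A3), $r_\lz\le|p|\le R_\lz$, and since $L_\lz(x,q)>0$ one has $H(x,p)=\lz$ (the supremum of a linear functional over a convex body with $0$ in its interior, by (A3), is attained on the level set $\{H=\lz\}$, not in the interior). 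This produces, for a.e.\ $\theta$, a point $p_\gz(\theta)\in\rn$ with $H(\gz(\theta),p_\gz(\theta))=\lz$, $r_\lz\le|p_\gz(\theta)|\le R_\lz$, and $p_\gz(\theta)\cdot\gz'(\theta)=L_\lz(\gz(\theta),\gz'(\theta))=1$.

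Next I would relate this support-function optimality to the subdifferential condition $C_\gz(\theta)\gz'(\theta)\in\partial_pH(\gz(\theta),p_\gz(\theta))$. Since $q=\gz'(\theta)$ is an outer normal to the sublevel set $K_\lz(\gz(\theta))=\{H(\gz(\theta),\cdot)\le\lz\}$ at the boundary point $p_\gz(\theta)$, and $H(\gz(\theta),\cdot)$ is convex with $H=\lz$ there, the normal cone to $K_\lz$ at $p_\gz(\theta)$ coincides with the set of nonnegative multiples of elements of $\partial_pH(\gz(\theta),p_\gz(\theta))$ (this is the standard fact that the normal cone of a convex sublevel set at a boundary point is the ray generated by the subdifferential of the function, given that $0$ is interior so the level value is not the minimum). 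Hence there is a scalar $C_\gz(\theta)\ge 0$ with $C_\gz(\theta)\gz'(\theta)\in\partial_pH(\gz(\theta),p_\gz(\theta))$; this is exactly (i). The bounds in (ii) then follow from Corollary \ref{c3.x4}: applying it with $p=p_\gz(\theta)$, $\lz$, and $q=C_\gz(\theta)\gz'(\theta)\in\partial_pH(\gz(\theta),p_\gz(\theta))$ gives $\tfrac{\lz}{R_\lz}\le C_\gz(\theta)|\gz'(\theta)|\le a_\lz$ and $p_\gz(\theta)\cdot(C_\gz(\theta)\gz'(\theta))\ge\lz$; but $p_\gz(\theta)\cdot\gz'(\theta)=1$, so $C_\gz(\theta)=C_\gz(\theta)\,p_\gz(\theta)\cdot\gz'(\theta)=p_\gz(\theta)\cdot q\ge\lz$, and combining with $|\gz'(\theta)|\le 1/r_\lz$ (from $|\gz'|\le L_\lz(\gz,\gz')/r_\lz$ via Lemma \ref{l2.w16}, using $L_\lz=1$) and $C_\gz(\theta)|\gz'(\theta)|\le a_\lz$... actually the clean upper bound $C_\gz(\theta)\le R_\lz a_\lz$ comes from $C_\gz(\theta)=C_\gz(\theta)|\gz'(\theta)|\cdot\frac{1}{|\gz'(\theta)|}\le a_\lz\cdot\frac{1}{|\gz'(\theta)|}$ together with the lower bound $|\gz'(\theta)|\ge 1/R_\lz$ obtained from $L_\lz(\gz(\theta),\gz'(\theta))=1\le R_\lz|\gz'(\theta)|$.

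The last technical point, which I expect to be the main obstacle, is measurability: one must choose $\theta\mapsto p_\gz(\theta)$ and $\theta\mapsto C_\gz(\theta)$ to be measurable functions rather than merely pointwise selections. I would handle this by a measurable selection argument: the set-valued map $\theta\mapsto\{p:H(\gz(\theta),p)=\lz,\ p\cdot\gz'(\theta)=1\}$ has closed nonempty values (nonemptiness a.e.\ by the above, closedness by lower semicontinuity of $H$ and continuity of the linear constraint) and is measurable in $\theta$ because $\gz$ and $\gz'$ are measurable and $H$ is Borel; hence by the Kuratowski--Ryll-Nardzewski selection theorem it admits a measurable selection $p_\gz$. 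Given $p_\gz$, the factor $C_\gz(\theta)$ can be recovered measurably, e.g.\ as the infimum of $\{c\ge 0: c\gz'(\theta)\in\partial_pH(\gz(\theta),p_\gz(\theta))\}$, which is again a measurable function of $\theta$ by the same type of argument applied to the measurable multifunction $\theta\mapsto\partial_pH(\gz(\theta),p_\gz(\theta))$. Once measurability is secured, the pointwise identities and bounds established above hold a.e., completing the proof. A cleaner alternative I would also consider is to avoid selection theory entirely by working with a reparametrization in which $|\gz'|$ is constant and noting that $\partial_pH$ is single-valued a.e.\ (since $H(x,\cdot)$ convex is differentiable off a null set), but this requires an extra Fubini-type argument to ensure the null set of nondifferentiability is avoided for a.e.\ $\theta$ along the specific curve $\gz$, so the selection-theorem route is safer.
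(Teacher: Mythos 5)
Your proof is correct, but it takes a genuinely different route from the paper's. The paper produces $p_\gz(\theta)$ by pulling it out of the distance function: it sets $u(\cdot)=d_\lz(x,\cdot)$, invokes Lemma~\ref{l3.x9}(ii) to get $H(\cdot,Du)\le\lz$ a.e., and then applies Crandall's chain rule (Lemma~\ref{l3.x8}) to obtain a measurable field $g(\theta)$, lying in the closed convex hull of nearby gradients of $u$, with $\frac{d}{d\theta}u(\gz(\theta))=g(\theta)\cdot\gz'(\theta)$. Differentiating $d_\lz(x,\gz(\theta))=\theta$ gives $g(\theta)\cdot\gz'(\theta)=1=L_\lz(\gz(\theta),\gz'(\theta))$, and a lower-semicontinuity argument pushes the constraint $H\le\lz$ through the convex hull, so that $g(\theta)$ is automatically a maximizer of $p\mapsto p\cdot\gz'(\theta)$ over $K_\lz(\gz(\theta))$. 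You reverse the order: you pick a maximizer of the support function directly, which lands you at the same object. From that point on the two proofs coincide --- the normal-cone/subdifferential step and the quantitative bounds via Corollary~\ref{c3.x4} are identical. What each approach buys: the paper's route gets measurability for free from Lemma~\ref{l3.x8} (Crandall's result already packages the selection), at the cost of invoking the a.e.\ eikonal bound for $d_\lz(x,\cdot)$; your route is more self-contained and makes the convex-duality picture transparent (support function, normal cone), but you have to supply a measurable selection argument yourself, as you correctly anticipate. Two minor cleanups you would want in a final write-up: (1) state the multifunction you select from as $\theta\mapsto\{p:H(\gz(\theta),p)\le\lz,\ p\cdot\gz'(\theta)=L_\lz(\gz(\theta),\gz'(\theta))\}$ (the $\le$ form has closed graph directly, and $H=\lz$ on it is a consequence, not a defining constraint); and (2) record explicitly that $H(x,\cdot)$, being lsc, convex, and finite on all of $\rn$ (the latter from (A3) with $r_\lz\to\infty$), is in fact continuous, so $K_\lz(x)$ is compact with $\partial K_\lz(x)=\{H(x,\cdot)=\lz\}$ and the Slater point $0\in\mathrm{int}\,K_\lz(x)$ validates the normal-cone identity $N_{K_\lz(x)}(p)=\bigcup_{t\ge0}t\,\partial_pH(x,p)$.
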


In order to prove Lemma \ref{l3.x7}, we recall Lemma \ref{l3.x8} and Lemma \ref{l3.x9}, whose proofs can
be found in \cite{c01} and \cite{cp} respectively.
\begin{lem}\label{l3.x8}
For every $u\in \lip(U)$ and Lipschitz curve $\gz:[a,b]\to U$, there exists a function $g :[a,b]\to\rn$
such that
$$\frac d{dt}u( \gz(t))=g (t)\cdot \gz'(t),$$
whenever $\gz$ is differentiable at $t$, and
$$g(t)\in\bigcap_{r>0}\mbox{closed convex hull of}\ Du(B(\gz(t),r)\setminus \mathcal N(u)),$$
where $\mathcal N(u)$ denotes the set of non-differentiable points of $u$.
\end{lem}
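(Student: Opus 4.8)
The idea is to obtain $g$ as a weak-$\ast$ limit of gradients of mollifications of $u$, for which the chain rule along the Lipschitz curve $\gz$ is classical, and then to identify the limiting convex set by a Lebesgue-differentiation argument. Set $L:=\lip(u,U)$, so that $u$ is differentiable a.e.\ in $U$ with $|Du|\le L$, while $\mathcal N(u)$ is Lebesgue-null; write $K:=\gz([a,b])\Subset U$ and let $\omega$ be a modulus of continuity of $\gz$. For all small $\ez>0$ the mollification $u_\ez:=u\ast\rho_\ez$ is smooth in a neighborhood of $K$, converges to $u$ uniformly on $K$, and satisfies $|Du_\ez|\le L$ there. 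First I would record the averaging identity $Du_\ez(x)=\int_{B(0,\ez)}Du(x-y)\rho_\ez(y)\,dy$: since $\mathcal N(u)$ is null, this expresses $Du_\ez(x)$ as the barycenter, against a probability density, of values $Du(z)$ with $z\in B(x,\ez)\setminus\mathcal N(u)$, so that for every $r\ge\ez$ and $x\in K$ one has $Du_\ez(x)\in S_r(x):=\overline{\mathrm{conv}}\big(Du(B(x,r)\setminus\mathcal N(u))\big)$, a nonempty bounded closed convex set.

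Next I would pass to the limit. The functions $g_\ez:=Du_\ez\circ\gz$ are bounded by $L$ in $L^\fz([a,b];\rn)$, so along a sequence $\ez_k\downarrow0$ we may assume $g_{\ez_k}\rightharpoonup^\ast g$ for some $g\in L^\fz([a,b];\rn)$. Because $u_\ez\in C^\infty$ near $K$, the classical chain rule gives $\frac d{dt}u_\ez(\gz(t))=Du_\ez(\gz(t))\cdot\gz'(t)$ at each $t$ where $\gz'(t)$ exists, hence $u_\ez(\gz(\tau))-u_\ez(\gz(s))=\int_s^\tau g_\ez(t)\cdot\gz'(t)\,dt$ for $a\le s\le\tau\le b$. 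Letting $\ez=\ez_k\to0$ --- using uniform convergence on the left side, and $\gz'\in L^1$ together with weak-$\ast$ convergence on the right --- we get $u(\gz(\tau))-u(\gz(s))=\int_s^\tau g(t)\cdot\gz'(t)\,dt$, and therefore $\frac d{dt}u(\gz(t))=g(t)\cdot\gz'(t)$ for a.e.\ $t$, in particular at a.e.\ $t$ where $\gz$ is differentiable (and where $u\circ\gz$ is), as claimed.

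It then remains to show $g(t)\in\bigcap_{r>0}S_r(\gz(t))$ for a.e.\ $t$. Fix $r>0$; for $k$ large enough that $\ez_k<r$ we have $g_{\ez_k}(t)\in S_r(\gz(t))$ for all $t$. At a Lebesgue point $t_0$ of $g$, averaging over $I_\rho:=[t_0-\rho,t_0+\rho]\cap[a,b]$ and using $B(\gz(t),r)\subset B(\gz(t_0),r+\omega(\rho))$ for $t\in I_\rho$, we find that $|I_\rho|^{-1}\int_{I_\rho}g_{\ez_k}$ lies in the closed convex set $S_{r+\omega(\rho)}(\gz(t_0))$; letting first $k\to\fz$ and then $\rho\to0$ gives $g(t_0)\in S_{r+\omega(\rho)}(\gz(t_0))$ for every $\rho>0$. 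Given any $r_0>0$, taking $r=r_0/2$ and $\rho$ small enough that $\omega(\rho)\le r_0/2$, and using that $r\mapsto S_r(\gz(t_0))$ is nondecreasing, we conclude $g(t_0)\in S_{r_0}(\gz(t_0))$; as $r_0>0$ is arbitrary, $g(t_0)\in\bigcap_{r>0}S_r(\gz(t_0))$, which finishes the proof (redefining $g$ arbitrarily on the remaining null set).

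I expect the delicate point to be this last step: one passes from weak-$\ast$ convergence, which only controls integral averages, to pointwise membership of $g(t)$ in the $t$-dependent convex sets $S_r(\gz(t))$, and this requires a Lebesgue-differentiation argument combined with control --- via the modulus of continuity of $\gz$ --- of how $S_r(\gz(t))$ varies in $t$. The other point deserving care is that $\mathcal N(u)$ is Lebesgue-null, which is precisely what makes the averaging identity $Du_\ez(x)\in S_\ez(x)$ meaningful: it lets the mollified gradient be read as an average of the (only a.e.-defined) gradient $Du$ over $B(x,\ez)\setminus\mathcal N(u)$.
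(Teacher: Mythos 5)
The paper does not give a proof of this lemma; it is recalled from Crandall~\cite{c01}, so there is no in-paper argument to compare yours against. Your mollification argument is a correct and standard way to establish the result, and the essential ingredients (the averaging identity for the mollified gradient, weak-$*$ compactness in $L^\infty$, and Lebesgue differentiation against the $t$-dependent convex sets $S_r(\gamma(t))$) are all in place.

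Two points worth tightening. First, ``redefining $g$ arbitrarily on the remaining null set'' is not quite right: on that null set you must choose $g(t)$ from $\bigcap_{r>0}S_r(\gamma(t))$, which is nonempty because it is a decreasing intersection of nonempty compact convex sets (each $S_r(\gamma(t))$ is bounded by $\lip(u,U)$, closed, convex, and nonempty precisely because $\mathcal N(u)$ is Lebesgue-null so $B(\gamma(t),r)\setminus\mathcal N(u)$ has positive measure). A truly arbitrary choice would break the membership conclusion on the exceptional null set. Second, your argument produces the chain rule for a.e.\ $t$ --- at the Lebesgue points of $g\cdot\gamma'$ --- and that is what the statement must mean: read literally, ``whenever $\gamma$ is differentiable at $t$'' is too strong, since $u\circ\gamma$ may fail to be differentiable at a point where $\gamma$ is (take $u(x)=|x|$, $\gamma(t)=t$, $t=0$). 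The a.e.\ version is exactly what the paper uses in the proof of Lemma~\ref{l3.x7}, where $g\cdot\gamma'$ is only ever integrated.
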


\begin{lem}\label{l3.x9}
(i) For $u\in \lip_\loc(U)$ and $\lz\ge \|H(\cdot,D u)\|_{L^\fz(U)}$, it holds that
\begin{equation}\label{e3.x3} u(y)-u(x)\leq d_{\lambda}(x,y),\ \forall\ x,y\in U.
\end{equation}
(ii) If $u:U\to\rr$ satisfies  \eqref{e3.x3},
then $u\in \lip (U)$ and $\|H(\cdot,D u)\|_{L^\fz(U)}\le\lz$.
In particular, for any $x\in U$ it holds that
\begin{equation}\label{e3.x3.3}
\big\|H(\cdot, Dd_\lz(x,\cdot)\big\|_{L^\infty(U)}\le\lambda.
\end{equation}
\end{lem}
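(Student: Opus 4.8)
The plan is to prove the two implications separately: (i) converts the pointwise gradient bound $H(\cdot,Du)\le\lz$ into the cone inequality \eqref{e3.x3} by integrating along Lipschitz curves, and (ii) reverses this by differentiating \eqref{e3.x3} along line segments; the final assertion \eqref{e3.x3.3} is then just (ii) applied to $u=d_\lz(x,\cdot)$, which obeys \eqref{e3.x3} by the triangle inequality for $d_\lz$.

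For (i), fix $x,y\in U$ with $d_\lz(x,y)<\fz$ and $\ez>0$. Since $x,y$ are interior points, a standard inward-perturbation of a near-minimizing curve produces a Lipschitz $\gz\colon[a,b]\to U$ with $\gz(a)=x$, $\gz(b)=y$ and $\ell_\lz(\gz)\le d_\lz(x,y)+\ez$; it then suffices to show $u(y)-u(x)\le\ell_\lz(\gz)$. As $u\in\lip_\loc(U)$ and $\gz([a,b])\Subset U$, the composition $u\circ\gz$ is Lipschitz, so Lemma \ref{l3.x8} yields $g\colon[a,b]\to\rn$ with $\frac d{dt}u(\gz(t))=g(t)\cdot\gz'(t)$ a.e.\ and $g(t)\in\bigcap_{r>0}\overline{\mathrm{conv}}\,Du\bigl(B(\gz(t),r)\setminus(\mathcal N(u)\cup N)\bigr)$, where $N\subset U$ is a null set outside of which $H(\cdot,Du)\le\lz$ (enlarging the exceptional set to include $N$ does not affect $g$). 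The key point is that $g(t)\in\{p\in\rn\colon H(\gz(t),p)\le\lz\}$ for a.e.\ $t$: for $z$ in the above set one has $Du(z)\cdot q\le L_\lz(z,q)$, so the support function of $Du(B(\gz(t),r)\setminus(\mathcal N(u)\cup N))$ is at most $\sup_{z\in B(\gz(t),r)}L_\lz(z,q)$; since $L_\lz(\cdot,q)$ is upper semicontinuous --- a consequence of the lower semicontinuity of $H$ in (A1) together with the uniform bound $\{H(z,\cdot)\le\lz\}\subset B(0,R_\lz)$ from (A3) --- this supremum decreases to $L_\lz(\gz(t),q)$ as $r\downarrow0$, hence $g(t)\cdot q\le L_\lz(\gz(t),q)$ for all $q\in\rn$, and biduality of the support function of the closed bounded convex set $\{H(\gz(t),\cdot)\le\lz\}$ (which contains $0$) gives the claim. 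Therefore $g(t)\cdot\gz'(t)\le L_\lz(\gz(t),\gz'(t))$ a.e., and integrating, $u(y)-u(x)=\int_a^b g(t)\cdot\gz'(t)\,dt\le\ell_\lz(\gz)\le d_\lz(x,y)+\ez$. Letting $\ez\to0$ yields \eqref{e3.x3}.

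For (ii), assume $u$ satisfies \eqref{e3.x3}. Applying it with $x,y$ interchanged and invoking Lemma \ref{l2.w16} gives $|u(x)-u(y)|\le R_\lz\,d_U(x,y)$ for all $x,y\in U$; since $d_U$ coincides locally with the Euclidean distance, $u$ is locally Lipschitz with $|Du|\le R_\lz$ a.e., so $u\in\lip(U)=W^{1,\fz}(U)$. To bound $H(\cdot,Du)$, fix a point $x_0$ of differentiability and a unit vector $v$. For small $t>0$ the segment $s\mapsto x_0+sv$, $s\in[0,t]$, lies in $U$, so \eqref{e3.x3} along it and $d_\lz(x_0,x_0+tv)\le\int_0^tL_\lz(x_0+sv,v)\,ds$ give, after dividing by $t$ and letting $t\downarrow0$: the left side tends to $Du(x_0)\cdot v$, while $|L_\lz(\cdot,v)|\le R_\lz$ and upper semicontinuity of $L_\lz(\cdot,v)$ give $\limsup_{t\downarrow0}\frac1t\int_0^tL_\lz(x_0+sv,v)\,ds\le L_\lz(x_0,v)$. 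Thus $Du(x_0)\cdot v\le L_\lz(x_0,v)$ for every $v\in\rn$, and biduality again forces $H(x_0,Du(x_0))\le\lz$; hence $\|H(\cdot,Du)\|_{L^\fz(U)}\le\lz$. Finally, for fixed $x\in U$ the function $u:=d_\lz(x,\cdot)$ satisfies $u(y)-u(z)=d_\lz(x,y)-d_\lz(x,z)\le d_\lz(z,y)$ for $z,y\in U$, so \eqref{e3.x3.3} is the special case of what has just been proved.

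The main obstacle is the step in (i) identifying the generalized gradient $g(t)$ as an element of the sublevel set $\{p\colon H(\gz(t),p)\le\lz\}$: because $g(t)$ is only a limit of convex combinations of gradients $Du(z)$ at points $z\to\gz(t)$, pushing the pointwise constraint $H(z,Du(z))\le\lz$ through both a convexification and a limit needs all three structural hypotheses at once --- convexity of $H(x,\cdot)$, lower semicontinuity of $H$, and the coercivity-type bound (A3) --- which is exactly what makes $z\mapsto L_\lz(z,q)$ upper semicontinuous and lets the support-function/biduality argument close. The inward-perturbation of near-minimizing curves in (i) is a second, more routine technical point.
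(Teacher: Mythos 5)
The paper never proves Lemma \ref{l3.x9}: it delegates it to \cite{c01} and \cite{cp}. Your argument reproduces the standard route of those references — the generalized chain rule of Lemma \ref{l3.x8} plus convex duality for (i), differentiation along short segments plus duality for (ii), and the triangle inequality for \eqref{e3.x3.3} — and the two analytic hinges are sound: $z\mapsto L_\lz(z,q)$ is indeed upper semicontinuous by (A1) and (A3), so the support-function/separation argument correctly places $g(t)$ in $\{p:H(\gz(t),p)\le\lz\}$, and the Fatou-type limit in (ii) is fine. One small gap: Lemma \ref{l3.x8} as stated only excludes the non-differentiability set $\mathcal N(u)$, whereas you need the strengthening that an arbitrary further null set $N$ (where $H(\cdot,Du)>\lz$ might hold) can also be removed. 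That strengthening is true and follows from the mollification proof of Lemma \ref{l3.x8}, but you assert it in a parenthesis rather than justify it; it is not a consequence of the lemma's statement alone.

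The more serious issue is the ``standard inward-perturbation'' in (i), which you file under routine technicalities. It is not: $d_\lz$ is defined over Lipschitz curves in $\overline U$, while $u$ is only locally Lipschitz in the open set $U$, so $u\circ\gz$ is undefined wherever a near-minimizing curve touches $\partial U$. For a general bounded domain a near-$d_\lz$-minimizing curve cannot be pushed into $U$ at negligible cost — for a slit disc with $H(x,p)=|p|^2$ and $u$ the angle function, \eqref{e3.x3} itself fails for points on opposite sides of the slit. This is precisely why Remark \ref{r3.x10} states the local version conditionally on the existence of a minimizing geodesic whose interior lies in the open set. So either some implicit regularity of $U$ (interior-accessibility of $d_\lz$-near-geodesics) must be invoked, or $d_\lz$ must be understood via curves essentially contained in $U$; your proof should surface that hypothesis instead of labelling the step routine.
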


\begin{rem}\label{r3.x10}\rm
For $u\in \lip_\loc(U)$ and $V\Subset U$, let $\lz\ge \|H(\cdot,D u)\|_{L^\fz(V)}$.
Then by the same argument as in Lemma \ref{l3.x9} (i), we also have that
$$u(y)-u(x)\leq d_{\lambda}(x,y) $$
for every $x,y\in\overline V$ whenever
there exists a $d_\lz$-length minimizing geodesic curve $\gz$ in $\overline V$
joining $x$ to $y$ such that $\gz\setminus\{x,y\}\subset V$.
\end{rem}

\begin{proof}[Proof of Lemma \ref{l3.x7}] For $\gz\in\Gamma_\lz(x)$ and
$0\le\theta_1<\theta_2\le \ell_\lz(\gz)$, it holds that
$$d_\lz(\gz(\theta_1),\gz(\theta_2))=\theta_2-\theta_1.$$
\noindent {\it Case 1}: {\it $d_\lz(x,\cdot)$ is differentiable
for all $z\in U\setminus\{x\}$}. By Lemma \ref{l3.x9} (ii) and $H\in \lsc(\overline U\times \rn)$,
we have that $H(z,D_zd_\lz(x,z))\le\lz$ for all $z\in U\setminus\{x\}$. Thus, for any $0\le \theta\le \ell_\lz(\gz)$,
it holds that
$$d_\lz(x,\gz(\theta))=\int_0^\theta D_zd_\lz(x,\gz(s))\cdot \gz'(s)\,ds\le
 \int_0^\theta L_\lz(\gz(s),\gz'(s))\,ds=d_\lz(x,\gz(\theta)).$$
This implies that
$$D_zd_\lz(x,\gz(\theta))\cdot \gz'(\theta)=L_\lz(\gz(\theta),\gz'(\theta))= 1\ \ {\rm{for\ a.e.}} \ \theta\in
[0,\ell_\lz(\gz)].
$$
Set $p_\gz(\theta)= D_zd_\lz(x,\gz(\theta))$ for $\theta\in [0, \ell_\lz(\gz)]$. By the convexity of
$\{p:\ H(\gz(\theta),p)\le\lz\}$, we have that $H(\gz(\theta),p_\gz(\theta))=\lz$ and
there exists $C_\gz(\theta)>0$ such that
$C_\gz(\theta)\gz'(\theta)\in \partial_pH(\gz(\theta),p_\gz(\theta))$ for all $\theta\in [0,\ell_\lz(\gz)]$.
%Notice that $Dd_\lz(x,\cdot)$  is locally uniform bound from above and also away from $0$.
By (A3), we have that  $r_\lz\le |p_\gz(\theta)|\le R_\lz$.
Since $C_\gz(\theta)\gz'(\theta)\cdot p_\gz(\theta)\ge H(\gz(\theta),p_\gz(\theta))=\lz$, we have
that  $C_\gz(\theta)\ge \lz.$
By Corollary \ref{c3.x4}, we have that $C_\gz(\theta)|\gz'(\theta) |\le a_\lz $.
This, together with $\displaystyle |\gz'(\theta)|\ge {|p_\gz(\theta)|}^{-1}$,
implies that  $C_\gz(\theta)\le a_\lz R_\lz$.

\smallskip
\noindent{\it Case 2}: {\it $u(\cdot):=d_\lz(x,\cdot)$ is differentiable almost everywhere in $U\setminus\{x\}$}.
Let $\mathcal N(u)\subset U\setminus\{x\}$ denote the non-differentiable set of $u$.
By Lemma \ref{l3.x8} there exists $g\in[0,\,\ell_\lz(\gz)]\to\rn$ such that
$$\frac d{dt}u( \gz(t))=g(t)\cdot \gz'(t)$$
whenever $\gz$ is differentiable at $t$, and
\begin{equation}\label{e3.x5}g(t)\in\bigcap_{r>0}\mbox{closed convex hull of}\ Du(B(\gz(t),r)\setminus \mathcal N(u)),
\ \forall \ t\in [0,\ell_\lz(\gz)].
\end{equation}
Set $p_\gz(t)=g (t)$ for $t\in[0,\ell_\lz(\gz)]$. We need to show that
\begin{equation}\label{e3.x3.4}
H(\gz(t),p_\gz(t))\le\lz, \ {\mbox{for all}} \ t\in[0,\ell_\lz(\gz)].
\end{equation}
Observe that $H\in\lsc(\overline U\times\rn)$ implies that for any $\ez>0$
there exists  $r>0$ such that
 \begin{equation}\label{e3.x3.5}
 H(\gz(t),p )\le H(x ,p ) +\ez,
 \end{equation}
 whenever $|x-\gz(t)|\le r$ and $\displaystyle|p|\le  2\|Du\|_{L^\fz(B(\gz(t),r))}$.

Choose $\displaystyle p=\sum_{i=1}^m a_ip_i$ for some positive integer $m$,
where  $p_i\in Du(x_i)$, $x_i\in B(\gz(t),r)$ and $\displaystyle
|Du(x_i)|\le \|Du\|_{L^\fz(B(\gz(t),r))}$, $a_i\ge0$ for $1\le i\le m$,
and $\displaystyle
\sum_{i=1}^m a_i=1$.
Since $H(\gz(t),\cdot)$ is convex and $H(x_i,p_i)\le\lz$ for $1\le i\le m$,
it follows from (\ref{e3.x3.5}) that
\begin{eqnarray*}
H(\gz(t),p)&&\le \sum_{i=1}^m a_i H(\gz(t),p_i)\\
&&= \sum_{i=1}^m a_i H(x_i,p_i)+  \sum_{i=1}^m a_i (H(\gz(t),p_i)-H(x_i,p_i))\\
&&\le \lz+ \ez.
\end{eqnarray*}
Sending $\epsilon$ to zero, this implies (\ref{e3.x3.4}).

From (\ref{e3.x3.4}),  we have that
$$d_\lz(x,\gz(\theta))=\int_0^\theta p_\gz(t)\cdot \gz'(t)\,dt\le
 \int_0^\theta L_\lz(\gz(t),\gz'(t))\,ds=d_\lz(x,\gz(\theta)), \ \forall \ \theta\in [0,\ell_\lz(\gz)].
$$
Hence
$$p_\gz(t)\cdot \gz'(t)= L_\lz(\gz(t),\gz'(t)), \ {\mbox{for a.e.}}\ t\in [0, \ell_\lz(\gz)].
$$
From $H(\gz(t),p_\gz(t))\le\lz$ and the definition of $L_\lz(\cdot,\cdot)$, we have
that  $H(\gz(t),p_\gz(t))=\lz $ for almost all $t$.
Now we can follow the same argument as above to find a function $C_\gz$ such that
 $C_\gz(\theta)\gz'(\theta)\in \partial_pH(\gz(\theta),p_\gz(\theta))$
 and $C_\gz(\theta)$ enjoys the desired properties.
\end{proof}

With the help of Lemma \ref{l3.x7},  we prove Theorem \ref{t3.x6} now.
\begin{proof}[Proof of Theorem \ref{t3.x6}]
(i) For any given $\gz\in\Gamma_\lz(x)$, define $\mathcal D_\gz: [0,\,\ell_\lz(\gz)]\to \mathbb R_+$ by
$$\mathcal D_\gz (\theta)= \int^\theta_0\frac{1}{C_\gz(s)}\,ds,$$
where $C_\gz$ is given by Lemma \ref{l3.x7}.
For $t>0$, $\lz>0$, and $x\in U$,
define the set $E^t_\lz(x)$ by
$$E_\lz^t(x):= \Big\{\gz(\mathcal D_\gz^{-1}(t))\ \big| \ \gz\in\Gamma_\lz(x)\
{\mbox{and}}\ \max_{\theta\in[0,\ell_\lz(\gz)]}\mathcal D_\gz(\theta)\ge t
\Big\}.
$$
Then \eqref{e3.x2.3} is equivalent to
\begin{equation}\label{e3.x3.6}
\cl_t(x,\gz(\mathcal D_\gz^{-1}(t)) )= d_\lz(x,\gz(\mathcal D_\gz^{-1}(t)))-\lz t,
\end{equation}
provided $\gz\in\Gamma_\lz(x)$ and $\displaystyle\max_{\theta\in[0,\ell_\lz(\gz)]}\mathcal D_\gz(\theta)\ge t$.

From Lemma \ref{l3.x2}(ii), (\ref{e3.x3.6}) holds if we can show that
\begin{equation}\label{e3.x3.7}
\cl_t(x,\gz(\mathcal D_\gz^{-1}(t)) )\le d_\lz(x,\gz(\mathcal D_\gz^{-1}(t)))-\lz t.
\end{equation}
It follows from Lemma \ref{l3.x7} that for all  $\theta\in[0,\ell_\lz(\gz)]$ and $p\in\rn$,
it holds that
\begin{eqnarray*}
H(\gz(\theta),p)&&\ge
H(\gz(\theta),p_\gz(\theta))+ C_\gz(\theta)\gz'(\theta)(p-p_\gz(\theta))\\
&&=\lz+C_\gz(\theta)\gz'(\theta)(p-p_\gz(\theta)).
\end{eqnarray*}
Thus
\begin{eqnarray*}
L(\gz(\theta),  C_\gz(\theta)\gz'(\theta))&&=\sup_{p\in\rn}\Big\{ p\cdot  C_\gz(\theta)\gz'(\theta)- H(\gz(\theta),p)\Big\}\\
&&\le C_\gz(\theta)\gz'(\theta)\cdot p_\gz(\theta) - \lz = C_\gz(\theta)-\lz.
\end{eqnarray*}
Set $ \bz(\theta)=\gz( \mathcal D_\gz^{-1}(\theta))$ for $0\le\theta\le t$.
 Then we have that
 $$ \bz'(\theta)=   C_\gz\big(\mathcal D_\gz^{-1}(\theta) \big)
 \gz'\big(\mathcal D_\gz^{-1}(\theta)\big),$$
and hence
\begin{eqnarray*}
&&\cl_t\big(x, \gz(\mathcal D_\gz^{-1}(t))\big)
\le\int_0^tL(\bz(\theta),\beta'(\theta))\,d\theta\\
&&= \int_0^tL\big(\gz(\mathcal D_\gz^{-1}(\theta)), C_\gz(\mathcal D_\gz^{-1}(\theta))\gz'(\mathcal D_\gz^{-1}(\theta))\big)\,d\theta\\
&&=  \int_0^{\mathcal D_\gz^{-1}(t)}
L(\gz( \theta), C_\gz( \theta )\gz'( \theta) )C_\gz(\theta)^{-1}\,d\theta\\
&&\leq \int_0^{\mathcal D_\gz^{-1}(t)}\big(C_\gz(\theta)-\lz\big)C_\gz(\theta)^{-1}\,d\theta\\
&&= \mathcal D_\gz^{-1}(t)- \lz \mathcal D_\gz\big(\mathcal D_\gz^{-1}(t)\big)
=\mathcal D_\gz^{-1}(t)-\lz t.
\end{eqnarray*}
This, combined with the fact that $\displaystyle
d_\lz\big(x,\gz(\mathcal D_\gz^{-1}(t))\big)= \mathcal D_\gz^{-1}(t),$
implies (\ref{e3.x3.7}).

\smallskip
\noindent(ii)  For any given $y\in E^t_\lz(x)$, there exists $\gz\in\Gamma_\lz(x)$
such that  $y=\gz\big(\mathcal D_\gz^{-1}(t)\big)$. Moreover, it holds that
 $d_\lz(x,y)=\mathcal D_\gz^{-1}(t)$
 and hence $\mathcal D_\gz(d_\lz(x,y)) =t$, that is,
 $$\int_0^{d_\lz(x,y)}\frac1{C_\gz(s)}\,ds=t.$$
 This, together with $C_\gz(s)\le a_\lz R_\lz$, implies that
 $$d_\lz(x,y)\le a_\lz R_\lz t.$$
Thus we obtain that $E^t_\lz(x)\subseteq \overline{B_{d_\lz}(x,a_\lz R_\lz t)}$.
Hence $E^{<s}_\lz(x)\subset  \overline{B _{d_\lz}(x,a_\lz R_\lz s)}$ for all $s>0$.

On the other hand, if $t>s>0$ and $y\in E_\lz^t(x)$,
we have that
$$0\le \cl_t(x ,y)=d_\lz(x,y)-\lz t,$$
so that $d_\lz(x,y)\ge \lz t> \lz s$. Hence we obtain that
$$
 B_{d_\lz}(x,\lz s)\subset \overline U\setminus E_\lz^{\ge s}(x) = E_\lz^{< s}(x).$$

\noindent(iii) %We first show that $E_\lz^t(x_0)$ is closed. Assume that $x_i\in E_\lz^t(x_0)$ and $x_i\to z$. we try to show that $z\in E_\lz^t(x_0)$.
%Notice that in this case, $\cl_t(x_0,x_i)=d_\lz(x_0,x_i)-\lz t\to d_\lz(x_0,z)-\lz t.$
%It suffices to show that $\cl_t(x_0,x_i)\to\cl_t(x_0,z)$. Since $d_\lz(x_0,z)-\lz t\le \cl_t(x_0,z)$,
%we only need to show that for every $\ez>0$, $\cl_t(x_0,z)\le \cl_t(x_0,x_i)+\ez  $ when $i$ large enough.
%Assume that the curve $\az_i$ joining $x$ and $x_i$ satisfies
%$$\cl_t(x_0,x_i)\ge \int_0^tL(\az_i(\theta),\az_i'(\theta))\,d\theta-\ez.$$
%Let $\gz_i(\theta)=x_i+\theta(x_i-z)$ be the line segment joining $x_i$ and $z$. Then $(\az_i\cup\gz_i)$ joining $x,z$, where
%$ (\az_i\cup\gz_i)(\theta)=\az_i( \theta (1+\dz) )$ when $\theta\in[0,\,t/(1+\dz)]$ and $=\gz_i(\frac{1+\dz}\dz(\theta -\frac t{1+\dz}))$ for $\theta\in(t/(1+\dz),t]$.
%Then when $|x_i-z|\le\dz$, we have
%\begin{eqnarray*}\cl_t(x,z)&&\le\int_0^{t}L((\az_i\cup\gz_i)(\theta),(\az_i\cup\gz_i)'(\theta))\,d\theta\\
%&&
%= \int_0^{t/(1+\dz)}L( \az_i ((1+\dz)\theta),  ( 1+\dz)\az_i '((1+\dz)\theta))\,d\theta+ \\
%&&\quad\int_{t/(1+\dz)}^t L(\gz_i(\frac{1+\dz}\dz(\theta -\frac t{1+\dz})),
%\frac{1+\dz}\dz\gz_i'(\frac{1+\dz}\dz(\theta -\frac t{1+\dz})) )\,d\theta \\&&
%=\frac1{1+\dz}\int_0^{t}L( \az_i (\theta),  ( 1+\dz)\az_i '(\theta))\,d\theta
%+\frac\dz{1 +\dz} \int_0^1 L( \gz_i (\theta), \frac { 1+\dz}\dz(x_i-z) )\,d\theta \\
%&&\le \frac1{1+\dz}\int_0^{t}L( \az_i (\theta),  ( 1+\dz)\az_i '(\theta))\,d\theta+C\dz.
%\end{eqnarray*}
%Since $\az_i'$ is essentially uniformly  bounded, when $\dz$ is small enough, we have
%$$\cl_t(x,z)<(1+\dz)\cl_t(x,x_i)+\ez$$
%as desired. This implies that $z\in E^t_\lz(x_0)$.
To show that $ E^{\le t}_\lz(x) $ is closed, let $x_i\in  E^{\le t}_\lz(x) $,
and $x_i  \to x_\fz\in U$. Without loss of generality, assume that
$x_\fz\ne x$.  Then there exists $t_i\in (0,t]$
such that
$$d_\lz(x,x_i)-\lz t_i=\cl_{t_i}(x,x_i).$$
Moreover, it follows from the proof of (i) that for each $i$, there exists $\gamma_i\in \Gamma_\lambda(x)$
such that $\beta_i(\theta)=\gamma_i\big(\mathcal D_{\gz_i}^{-1}(\theta)\big), 0\le\theta\le t_i,$
satisfies $\beta_i\in \mathcal C\big(0, t_i; x, x_i; \overline U\big)$,
$x_i=\beta_i(t_i)$, and
\begin{equation}\label{e3.x3.66}
\cl_t(x, x_i)=\int_0^{t_i} L\big(\beta_i(\theta), \beta_i'(\theta)\big)\,d\theta
=d_\lambda(x, x_i)-\lambda t_i.
\end{equation}
It follows from Lemmas \ref{l2.w16} and \ref{l3.x7} that
\begin{eqnarray*}
r_\lz |\beta_i(\theta_1)-\beta_i(\theta_2)|&&\le d_\lambda\big(\beta_i(\theta_1), \beta_i(\theta_2)\big)
=d_\lz\Big(\gz_i\big(\mathcal D_{\gz_i}^{-1}(\theta_1)\big),
\gz_i\big(\mathcal D_{\gz_i}^{-1}(\theta_2)\big)\Big)\\
&&=\big|\mathcal D_{\gz_i}^{-1}(\theta_1)-\mathcal D_{\gz_i}^{-1}(\theta_2)\big|
\le \big\|(\mathcal D_{\gz_i}^{-1})'\big\|_{L^\infty([0, t_i])}|\theta_1-\theta_2|\\
&&\le \big\|C_{\gz_i}\big\|_{L^\infty([0, d_\lz(x,x_i)])}|\theta_1-\theta_2|\le
a_\lz R_\lz |\theta_1-\theta_2|
\end{eqnarray*}
holds for any $0\le \theta_1<\theta_2\le t_i$. Hence we conclude that
\begin{equation}\label{e3.x3.660}
\big\|\beta_i'(\cdot)\big\|_{L^\infty([0,t_i])}\le \frac{a_\lz R_\lz}{r_\lz}.
\end{equation}
Since  $x_\fz\ne x$,
there exists $i_0$ such that for $i\ge i_0$ it holds that
$$d_\lz(x_\fz, x_i)<\frac12d_\lz(x_\fz,x) \ {\rm{and}} \
|x_i-x_\fz|<\frac12\min\Big\{|x-x_\fz|,\dist(x_\fz,\partial U)\Big\}.$$
This further implies that
 $$d_\lz(x,x_i)\le d_\lz(x,x_\fz)+d_\lz(x_\fz,x)\
 \ {\rm{and}}\ \ |x-x_i|\ge \frac12|x-x_\fz|.$$
We claim that $\displaystyle\liminf_{i\to\fz}t_i>0$.
For, otherwise, there exists a subsequence $t_{i_k}$ such that
$ t_{i_k}\to0$.  Since
$$d_\lz(x,x_{i_k})=\cl_{t_{i_k}}(x,x_{i_k})+\lz t_{i_k}
\ge\cl_{t_{i_k}}(x,x_{i_k})\ge M\Big(\frac{d_U(x,x_{i_k})}{t_ {i_k}}\Big) d_U(x, x_{i_k}),$$
we obtain that
$$d_\lz(x,x_\fz)+d_\lz(x_\fz,x)\ge \frac12M\Big(\frac{d_U(x,x_\fz)}{2t_{i_k}}\Big)d_U(x,x_\fz)\to\fz
\ {\rm{as}}\ k\to \fz,$$
this is impossible.

By taking a subsequence, we may assume that $t_i\to t_\fz\in (0,\,t]$.
We want to show that
\begin{equation}\label{e3.x3.8}
\cl_{t_\fz}(x,x_\fz)=d_\lz(x,x_\fz)-\lz t_\fz.
\end{equation}
Since $$\cl_{t_i}(x,x_i)=d_\lz(x,x_i)-\lz t_i\to d_\lz(x,x_\fz)-\lz t_\fz\le \cl_{t_\fz}(x,x_\fz),$$
it suffices to prove
$$\liminf_{i\to\fz}\cl_{t_i}(x,x_i)\ge\cl_{t_\fz}(x,x_\fz).$$
Since $s\to \cl_{s}(\cdot,\cdot)$ is monotone decreasing, we may assume that
$t_i\ge t_\fz$.
Let $\gz_i(\theta)=\theta x_\fz+(1-\theta)x_i$, $\theta\in[0,1]$,
be the line segment between $x_i$ and $x_\fz$.
For $0<\delta\le \frac12$, define $(\bz_i\cup\gz_i)\in \mathcal C\big(0, t_\fz; x, x_\fz; \overline U\big)$
by
$$\displaystyle(\bz_i\cup\gz_i)(\theta)=\begin{cases}
\bz_i((1+\dz)\theta ) & \theta\in[0,\,\frac{t_i}{1+\delta}],\\
\gz_i\big(\frac{1+\dz}{(1+\dz){t_\fz}-t_i}(\theta -\frac {t_i}{1+\dz})\big)
& \theta\in [\frac{t_i}{1+\delta},t_\fz].
\end{cases}
$$
Let $i$ be sufficiently large so that $|x_i-x_\fz|\le\dz$
and $t_i\le (1+\frac12\dz)t_\fz$. Then we have that
\begin{eqnarray}\label{e3.x7.0}
\cl_{t_\fz}(x,x_\fz)&&\le\int_0^{{t_\fz}}L((\bz_i\cup\gz_i)(\theta),(\bz_i\cup\gz_i)'(\theta))\,d\theta
\nonumber\\
%&&
%= \int_0^{t_i/(1+\dz)}L( \az_i ((1+\dz)\theta),  ( 1+\dz)\az_i '((1+\dz)\theta))\,d\theta \\
%&&\quad+\int_{t_i/(1+\dz)}^{t_z} L(\gz_i(\frac{1+\dz}{(1+\dz){t_z}-t_i}(\theta -\frac {t_z}{1+\dz})),
%\frac{1+\dz}{(1+\dz){t_z}-t_i}\gz_i'(\frac{1+\dz}{(1+\dz){t_z}-t_i}(\theta -\frac{t_z}{1+\dz})) )\,d\theta \\
&&
=\frac1{1+\dz}\int_0^{t_i}L( \bz_i (\theta),  ( 1+\dz)\bz_i '(\theta))\,d\theta\nonumber\\
&&\quad
+\frac{(1+\dz)t_\fz-t_i}{1 +\dz} \int_0^{1} L\lf( \gz_i (\theta), \frac { 1+\dz}{(1+\dz){t_\fz}-t_i}(x_\fz-x_i)\r )\,d\theta
\nonumber \\
&&\le \frac1{1+\dz}\int_0^{{t_i}}L\big( \bz_i (\theta),  ( 1+\dz)\bz_i '(\theta)\big)\,d\theta+C({t_\fz})\dz,
\end{eqnarray}
where
$$C(t_\fz)=t_\fz\sup\Big\{L(y,q)\ |\  y\in\overline U, \ |q|\le \frac{4}{t_\fz}\Big\}<+\infty.$$
It follows from (\ref{e3.x3.660}) that there exists a compact set $K\subset\overline U\times \rn$ such that
$\beta_i([0,t_i])\subset K$ for all $i\ge 1$. Since $L\in\usc(\overline U\times \rn)$, $L$ is uniformly upper semicontinuous
in $K$. Hence for any $\epsilon>0$, there exists $\delta>0$ such that
\begin{equation}\label{e3.x3.7.1}
L\big(\beta_i(\theta), (1+\delta)\beta_i'(\theta)\big)
\le L\big(\beta_i(\theta), \beta_i'(\theta)\big)+\epsilon
\end{equation}
holds for all $i$ and $\theta\in [0, t_i]$.
This implies
$$\int_0^{{t_i}}L\big( \bz_i (\theta),  ( 1+\dz)\bz_i '(\theta)\big)\,d\theta
\le \int_0^{{t_i}}L\big( \bz_i (\theta),  \bz_i '(\theta)\big)\,d\theta+t_i\epsilon
=\cl_{t_i}(x, x_i)+t_i\epsilon.$$
Putting this into (\ref{e3.x7.0}), we obtain that
$$\cl_{t_\infty}(x,x_\infty)\le \cl_{t_i}(x, x_i)+C(t_\infty)\delta+t_i\epsilon.$$
This clearly implies that
$\displaystyle\liminf_{i\to\fz}\cl_{t_i}(x,x_i)\ge\cl_{t_\fz}(x,x_\fz)$.

Replacing $t_i$ by $t$, the closeness of $E^t_\lambda(x)$ follows from the same argument as above.
Similarly, we can also prove that $ E^{\ge t}_\lz(x)$ is closed. Here, notice that if $x_i\in E^{t_i}_\lz(x)$
for some $t_i\ge t$ and $x_i\to z$ for some $z\in U$, then $t_i$ is bounded. For, otherwise, we would
have that $d_\lz(x,x_i)-\lz t_i  <0<\cl_{t_i}(x,x_i)$. By the same argument as above,
the closeness of  $E^{\geq t}_\lz(x)$ follows.

Finally, the closeness of $ E^{\ge t}_\lz(x)$ implies that
$E^{<t}_\lz(x) $ is relatively open in $\overline U$.
 Due to the relative  closeness of $E^{\leq t}_\lz(x)$, we conclude that the relative boundary of $E^{<t}_\lz(x) $ in $\overline U$ is contained in $E^t_{\lz}(x)$.

\smallskip
\noindent (iv) For $z\in E_\lz^{t+s}(x)$, there exists $\gz\in \Gamma_\lz(x )$ such that
$z=\gz \big(\mathcal D_\gz^{-1}(t+s)\big) $.  Set $y=\gz\big(\mathcal D_\gz^{-1}(t)\big)$.
Then it follows from the definition of $E_\lz^t(x)$ that $y\in E_\lz^t(x)$.
Now we want to show that $z\in E_\lz^s(y)$. To see this, first observe that
\begin{eqnarray*}
&&t+s=\mathcal D_\gz\big(\mathcal D_\gz^{-1}(t+s)\big)=\int_0^{\mathcal D_\gz^{-1}(t+s)}\frac{1}{C_\gz(\theta)}\,d\theta\\
&&=\int_0^{\mathcal D_\gz^{-1}(t)}\frac{1}{C_\gz(\theta)}\,d\theta
+\int_{\mathcal D_\gz^{-1}(t)}^{\mathcal D_\gz^{-1}(t+s)}\frac{1}{C_\gz(\theta)}\,d\theta\\
&&=t+\int_{\mathcal D_\gz^{-1}(t)}^{\mathcal D_\gz^{-1}(t+s)}\frac{1}{C_\gz(\theta)}\,d\theta.
\end{eqnarray*}
This implies that
\begin{equation}\label{e3.x3.10}
\int_{\mathcal D_\gz^{-1}(t)}^{\mathcal D_\gz^{-1}(t+s)}\frac{1}{C_\gz(\theta)}\,d\theta=s.
\end{equation}
Define $\widetilde{\gz}(\theta)=\gz\big(\mathcal D_\gz^{-1}(t)+\theta\big)$ for $\theta\ge 0$. Then
\begin{equation}
\label{e3.x3.12}
\widetilde{\gz}(0)=y\ \ {\rm{and}}\ \ \widetilde{\gz}\big(\mathcal D_\gz^{-1}(t+s)-\mathcal D_\gz^{-1}(t)\big)=z.
\end{equation}
Now we claim that
\begin{equation}
\label{e3.x3.11}
\mathcal D_{\widetilde\gz}^{-1}(s)=\mathcal D_\gz^{-1}(t+s)-\mathcal D_\gz^{-1}(t).
\end{equation}
In fact, by a simple change of variables, (\ref{e3.x3.10}) gives
$$
\int_0^{\mathcal D_\gz^{-1}(t+s)-\mathcal D_\gz^{-1}(t)}\frac{1}{C_{\widetilde\gz}(\theta)}\,d\theta=s.
$$
On the other hand, it follows from the definition that
$$s=\int_0^{\mathcal D_{\widetilde\gz}^{-1}(s)}\frac{1}{C_{\widetilde\gz}(\theta)}\,d\theta.$$
Combining these two identities yields (\ref{e3.x3.11}), and (\ref{e3.x3.12})
and (\ref{e3.x3.11})
yield $z\in E_\lz^s(y)$.

Finally, since $\gz\in\Gamma_\lz(x)$, we have that
\begin{eqnarray*}d_\lz(x,z)&&=d_\lz\big(\gz(0), \gz(\mathcal D_\gz^{-1}(t+s))\big)\\
&&=\mathcal D_\lz^{-1}(t+s)\\
&&=\mathcal D_\gz^{-1}(t)+
\big(\mathcal D_\gz^{-1}(t+s)-\mathcal D_\gz^{-1}(t)\big)\\
&&=\mathcal D_\gz^{-1}(t)+\mathcal D_{\widetilde\gz}^{-1}(s)\\
&&=d_\lz\big(\gz(0), \gz(\mathcal D_\gz^{-1}(t))\big) + d_\lz\big(\widetilde\gz(0), \widetilde\gz(\mathcal D_\gz^{-1}(s))\big)\\
&&=d_\lz(x,y)+d_\lz(y,z).
\end{eqnarray*}
This completes the proof.
\end{proof}

\subsection{Two special types of Hamiltonians $H(x,p)$}

In this subsection,  we consider two special types of $H(x,p)$ and
examine their corresponding geometric structures of the action function $\cl_t(\cdot,\cdot)$.
For convenience, we assume that $U=\rn$.

\smallskip
{\noindent \it Case 1.  $H(x,p)=H(p)$ is independent of $x\in\rn$:
$H(p)$ is convex;  $H(p)\ge H(0)=0$ for all $p\in\rn$ and $\big\{p\in\rn: H(p)=0\big\}$ has
no interior points; and $\displaystyle\lim_{|p|\rightarrow+\infty}H(p)=+\infty$. }

%
%For all $t>0$ and $x,y\in\rn$, we do not know if it is correct that
% $$\cl_t(x,y)=\sup_{\lz>0}[d_\lz(x,y)-\lz t]?$$
% This is equivalent to ask if $$\bigcup_{\lz>0} E^t_\lz(x)=\rn\setminus\{x\}?$$
%In the case $H(x,p)=H(p)$ and $H(x,p)=\langle A(x)p,p\rangle$, this is correct.
We have the following Lemma, which was shown by \cite{acjs}.
Here we also sketch a proof.
\begin{lem} Let $H$ be given as in Case 1 above.
Then  for all $t,\lz>0$ and $x,y\in\rn$, we have that
$  d_\lz(x,y) =L_\lz(y-x) $,
 and
\begin{equation}\label{e3.y1}\cl_t(x,y) =\max_{\lz\ge0}\{d_\lz(x,y)-\lz t\}=tL\lf(\frac{y-x}t\r).
\end{equation}
Consequently, $\cl_t$ enjoys the linearity: if $\gz(\theta)=x+\frac\theta t(y-x)$ for $\theta\in (0,t)$,
then
 $$\cl_\theta(x,\gz(\theta))=  \frac\theta t\cl_t(x,y),\ \forall\ \theta\in (0,t).$$
In particular, it holds that
\begin{equation}\label{e3.y1.1}
E^t_\lz(x)=x+t\bigcup_{\{H(p)=\lz\}}  \partial_p H(p).
\end{equation}

\end{lem}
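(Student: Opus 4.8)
Since $H$ does not depend on $x$, neither do $L_\lz(\cdot)$ nor $L(\cdot)$, and both are convex (the former also positively $1$-homogeneous), so all three assertions reduce to convex analysis together with the geodesic-ray description underlying Theorem~\ref{t3.x6}. I would treat the two formulas first, then the linearity (an immediate consequence), and finally the identification of $E^t_\lz(x)$ via a Fenchel-duality computation.

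\textbf{The two formulas.} Note $L_\lz$ is the support function of the convex body $\{H\le\lz\}$. Given any $\gz\in\mathcal C(a,b;x,y;\rn)$, Jensen's inequality and $1$-homogeneity yield
$$\ell_\lz(\gz)=\int_a^bL_\lz(\gz'(\theta))\,d\theta\ \ge\ (b-a)\,L_\lz\Big(\tfrac1{b-a}\int_a^b\gz'(\theta)\,d\theta\Big)=L_\lz(y-x),$$
while the segment $\gz(\theta)=x+\theta(y-x)$, $\theta\in[0,1]$, attains $L_\lz(y-x)$; hence $d_\lz(x,y)=L_\lz(y-x)$. The identical Jensen argument applied to the convex function $L$ gives $\cl_t(x,y)\ge tL(\frac{y-x}{t})$, and the segment $\gz(\theta)=x+\frac{\theta}{t}(y-x)$ gives the reverse inequality, so $\cl_t(x,y)=tL(\frac{y-x}{t})$. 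For the middle equality, combine \eqref{e3.x2.1} with $1$-homogeneity and the distance formula:
$$tL\Big(\tfrac{y-x}{t}\Big)=\sup_{\mu\ge0}\Big\{tL_\mu\big(\tfrac{y-x}{t}\big)-\mu t\Big\}=\sup_{\mu\ge0}\big\{L_\mu(y-x)-\mu t\big\}=\sup_{\mu\ge0}\big\{d_\mu(x,y)-\mu t\big\};$$
by super-linearity of $H$ (which holds under \eqref{u-coercive}), $\{H\le\mu\}\subset B(0,\mu/K)$ for $\mu$ large, so $L_\mu(y-x)-\mu t\to-\fz$ and the supremum is a maximum. The linearity $\cl_\theta(x,\gz(\theta))=\frac\theta t\cl_t(x,y)$ for affine $\gz$ is then immediate, since $\frac{\gz(\theta)-x}{\theta}=\frac{y-x}{t}$.

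\textbf{A duality identity and the inclusion $E^t_\lz(x)\subset x+tG_\lz$.} Write $G_\lz:=\bigcup_{\{H(p)=\lz\}}\partial_pH(p)$. For $\lz>0$ and $q\ne0$ I claim $L(q)=L_\lz(q)-\lz\Leftrightarrow q\in G_\lz$. Since a finite convex $H$ is continuous, $\partial\{H\le\lz\}=\{H=\lz\}$ for $\lz>0$; pick $p^*$ maximizing $p\cdot q$ over $\{H\le\lz\}$, so $H(p^*)=\lz$ and $L_\lz(q)=p^*\cdot q$. If $L(q)=L_\lz(q)-\lz=p^*\cdot q-H(p^*)$, then $p^*$ maximizes $p\mapsto p\cdot q-H(p)$, i.e.\ $q\in\partial_pH(p^*)\subset G_\lz$; conversely, if $q\in\partial_pH(p^*)$ with $H(p^*)=\lz$, the subgradient inequality gives $L(q)=p^*\cdot q-\lz$, and Fenchel--Young on $\{H\le\lz\}$ forces $L_\lz(q)=p^*\cdot q$, so $L(q)=L_\lz(q)-\lz$. (Note $0\notin G_\lz$ for $\lz>0$.) Now if $y\in E^t_\lz(x)$ with $t>0$, Theorem~\ref{t3.x6}(i) gives $\cl_t(x,y)=d_\lz(x,y)-\lz t$; dividing the two formulas above by $t$ yields $L(\frac{y-x}{t})=L_\lz(\frac{y-x}{t})-\lz$, so $\frac{y-x}{t}\in G_\lz$, i.e.\ $y\in x+tG_\lz$.

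\textbf{The reverse inclusion and the main difficulty.} Given $q\in G_\lz$, say $q\in\partial_pH(p^*)$ with $H(p^*)=\lz$ (so $q\ne0$ and $L_\lz(q)=p^*\cdot q$), consider $\gz(\theta)=x+\frac{\theta}{L_\lz(q)}q$ on $[0,\fz)$. By the distance formula, $d_\lz(\gz(s),\gz(r))=L_\lz((r-s)\gz')=r-s$ and $L_\lz(\gz'(\theta))\equiv1$, so $\gz$ is a $d_\lz$-length minimizing geodesic ray with $\ell_\lz(\gz)=\fz$, hence $\gz\in\Gamma_\lz(x)$. Along $\gz$ one may take $p_\gz\equiv p^*$ and $C_\gz\equiv L_\lz(q)$ in Lemma~\ref{l3.x7}, since $C_\gz\gz'(\theta)=q\in\partial_pH(p^*)$ and $H(p^*)=\lz$; therefore $\mathcal D_\gz(\theta)=\theta/L_\lz(q)$, $\mathcal D_\gz^{-1}(t)=tL_\lz(q)$, and $\gz(\mathcal D_\gz^{-1}(t))=x+tq\in E^t_\lz(x)$, which proves \eqref{e3.y1.1}. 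The one subtle point I expect to be the main obstacle is exactly this last step: one must check that, in the $x$-independent setting, the construction of $E^t_\lz(x)$ in the proof of Theorem~\ref{t3.x6} can be realized along the affine rays above with \emph{constant} $p_\gz$ and $C_\gz$ (admissible precisely because $\gz'$ is a fixed positive multiple of an element of $\partial_pH(p^*)$ with $H(p^*)=\lz$), and that no other $d_\lz$-geodesic rays can produce points outside $x+tG_\lz$---which is guaranteed by the a priori identity in Theorem~\ref{t3.x6}(i) together with the duality identity above.
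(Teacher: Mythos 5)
Your proof is correct and follows the same underlying route as the paper: segments plus Jensen for $d_\lz(x,y)=L_\lz(y-x)$ and $\cl_t(x,y)=tL(\frac{y-x}{t})$, Legendre--Fenchel duality for the middle supremum, and the geodesic-ray construction for $E^t_\lz(x)$. Two places where your write-up tightens the paper's argument are worth noting. First, for the lower bound $\cl_t(x,y)\ge tL\big(\frac{y-x}{t}\big)$ you apply Jensen's inequality directly to the convex $L$; the paper instead produces a maximizing $p_*$ in the Legendre transform and chains through Lemma~\ref{l3.x2}(ii). Both work and both show the supremum $\sup_{\lz\ge0}\{d_\lz(x,y)-\lz t\}$ is attained, but your version is more economical. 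Second, for $E^t_\lz(x)=x+t\bigcup_{\{H=\lz\}}\partial_pH$, the paper asserts the biconditional ``$y\in E^t_\lz(x)$ iff $\frac{y-x}{t}\in\partial_pH(p_*)$ for some $p_*$ with $H(p_*)=\lz$'' essentially in one line from \eqref{e3.y2}--\eqref{e3.y3}, while Theorem~\ref{t3.x6}(i) by itself only furnishes the forward direction of the defining equality. You correctly split this into the inclusion $E^t_\lz(x)\subset x+tG_\lz$ (Theorem~\ref{t3.x6}(i) plus your duality identity $L(q)=L_\lz(q)-\lz\Leftrightarrow q\in G_\lz$, whose proof is clean) and the reverse inclusion, which genuinely requires exhibiting a $\gz\in\Gamma_\lz(x)$ with constant $p_\gz\equiv p^*$ and $C_\gz\equiv L_\lz(q)$ admissible in Lemma~\ref{l3.x7}. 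That verification---which you carry out via the affine ray $\gz(\theta)=x+\frac{\theta}{L_\lz(q)}q$---is precisely the point the paper glosses over, and your treatment is complete.
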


\begin{proof}
To see  $ d_\lz(x,y) =L_\lz(y-x)$, let $\gz(\theta)=x+\theta(y-x) $, $\theta\in[0,1]$. Then
$\gz\in \mathcal C(0,1; x, y;\rn)$ and $\gz'(\theta)=y-x$. Hence we have that
$$d_\lz(x,y)\le  \int_0^1 L_\lz\lf(  \gz'(\theta) \r)\,d\theta=  L_\lz\lf(  y-x  \r).$$
On the other hand,  for every $\gz\in \mathcal C(0,1;x, y; \rn)$, it follows
from the convexity of $L_\lz(\cdot)$  that
$$ L_\lz\lf( y-x \r)\le \int_0^1 L_\lz(\gz'(\theta))\,d\theta=\ell_\lz(\gz).$$
Taking infimum over all $\gz\in \mathcal C(0,1;x, y; \rn)$, this yields $ L_\lz(y-x)\le d_\lz(x,y)$.

To see \eqref{e3.y1}, take $\widetilde\gz(\theta)=x+\frac{\theta}{t}(y-x)$, $\theta\in[0,t]$,
so that $\widetilde\gz'(\theta)=\frac{y-x}{t}$. Hence we obtain that
\begin{equation}\label{e3.y8}
\cl_t(x,y)\le \int_0^t L\lf( \gz'(\theta)\r)\,d\theta= \int_0^t L\lf( \frac{y-x}t\r)\,d\theta= tL\lf( \frac{y-x}t\r).
\end{equation}
%Conversely,
%for every rectifiable curve $\gz$ joining $x,y$, we have $y-x=\int_0^1\gz'(\theta)\,d\theta$.
%Since $L$ is convex, we have
%$$tL\lf(\frac{y-x}t\r)=tL\lf(\int_0^1\frac1t\gz'(\theta)\,d\theta\r)\le t\int_0^1 L\lf( \frac1t\gz'(\theta)\r)\,d\theta.$$
%Taking infimum over all such curves, we have
%$tL\lf(\frac{y-x}t\r)\le \cl_t(x,y)$, and hence $tL\lf(\frac{y-x}t\r)= \cl_t(x,y)$.
 On the other hand, %by Lemma \ref{l3.x2}(ii), we need to prove
%\begin{equation}\label{e3.y2}
%tL\lf(\frac{y-x}t\r)\le L_\lz  (y-x) -\lz t\end{equation}
%for some $\lz\ge0$.
since $H(p)$ is coercive, for $y\ne x$  there is  $p_*\in\rn$ such that
\begin{equation}\label{e3.y3}L\lf(\frac{y-x}t\r)+H(p_*)=p_*\cdot \frac{y-x}t.\end{equation}
Denote $\lz=H(p_*)$. Then by Lemma \ref{l3.x2} (ii) we have
\begin{equation}\label{e3.y2}
tL\lf(\frac{y-x}t\r)=p_*\cdot(y-x)-tH(p_*)\le L_\lz  (y-x) -\lz t=d_\lz(x,y)-\lz t
\le\cl_t(x,y),
\end{equation}
which together with \eqref{e3.y8} gives \eqref{e3.y1}.

It follows from (\ref{e3.y3}) and (\ref{e3.y2})
that $y\in E^t_\lz(x)$ iff there exists $p_*\in\rn$, with $H(p_*)=\lz$,
such that
$$H(p)\ge H(p_*)+\frac{y-x}{t}\cdot (p-p_*), \ \forall\ p\in\rn.$$
This is equivalent to  $\displaystyle\frac{y-x}{t}\in\partial_p H(p_*)$.
Hence $y\in E_\lz^t(x)$ iff $\displaystyle y-x\in t\bigcup_{\{H(p)=\lz\}}\partial_p H(p)$.
This yields (\ref{e3.y1.1}).
\end{proof}

{\noindent \it Case 2. $H(x,p)=\langle A(x)\cdot p, p\rangle$, $(x,p)\in\rn\times\rn$.
Here $A:\rn\to\mathbb R^{n\times n}$ is symmetric, lower semicontinuous,
and there exists $C\ge1$ such that
\begin{equation}\label{e3.y1.2}
C^{-1}|p|^2\le \langle A(x)\cdot p,p\rangle\le C|p|^2, \ \ \forall\  (x,p)\in\rn\times\rn.
\end{equation}
}

\begin{lem} Let $H(x,p)$ satisfy the properties given in Case 2.
Then  for all $t,\lz>0$ and $x,y\in\rn$, we have that
\begin{equation}\label{e3.y1.3}
d_\lz(x,y) =\sqrt \lz d_1(x,y),
\end{equation}
\begin{equation}\label{e3.y1.4}
\cl_t(x,y) =\max_{\lz\ge0}\Big\{d_\lz(x,y)-\lz t\Big\}=\frac1{4t}d_1^2(x,y),
\end{equation}
and
\begin{equation}\label{e3.y1.5}
E^t_\lz(x)=\partial B_{d_1}(x,2\sqrt \lz t)=\partial B_{d_\lz}(x,2t).
\end{equation}
In particular, $\cl_t$ enjoys the linearity: if $\gz:[0,t]\to  \rn$ is a $d_1$-length minimizing geodesic curve joining $x$
to $y$, then
\begin{equation}\label{linear2}
\cl_\theta(x,\gz(\theta))=  \frac\theta t\cl_t(x,y),\ \forall\ \theta\in (0,t).
\end{equation}

\end{lem}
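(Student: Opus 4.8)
The plan is to reduce everything to explicit convex analysis for the quadratic Hamiltonian and then feed the resulting formulas into Lemma~\ref{l3.x2} and Theorem~\ref{t3.x6}. First I would compute the two Lagrangians by diagonalizing $A(x)$: writing $w=A(x)^{1/2}p$, one has $\{p\in\rn:H(x,p)\le\lz\}=A(x)^{-1/2}\,\overline{B(0,\sqrt\lz)}$, hence
\[
L_\lz(x,q)=\sup_{|w|\le\sqrt\lz}\langle A(x)^{-1/2}w,q\rangle=\sqrt\lz\,\langle A(x)^{-1}q,q\rangle^{1/2}=\sqrt\lz\,L_1(x,q),
\]
and, maximizing $p\cdot q-\langle A(x)p,p\rangle$ over $p$ at the critical point $p=\tfrac12A(x)^{-1}q$,
\[
L(x,q)=\tfrac14\langle A(x)^{-1}q,q\rangle=\tfrac14\,L_1(x,q)^2;
\]
both identities use only the ellipticity \eqref{e3.y1.2} and are uniform in $x$. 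Since $L_\lz=\sqrt\lz\,L_1$ pointwise, every Lipschitz curve satisfies $\ell_\lz(\gz)=\sqrt\lz\,\ell_1(\gz)$, and taking infima over curves gives \eqref{e3.y1.3}. Because $U=\rn$, Lemma~\ref{l2.w16} shows $d_1$, hence each $d_\lz$, is bi-Lipschitz equivalent to the Euclidean metric, so $(\rn,d_\lz)$ is a complete, locally compact length space; thus the length-space property established above produces for each $x,y$ a $d_1$-minimizing geodesic on $[0,t]$ which, after reparametrization, has constant $L_1$-speed $d_1(x,y)/t$.

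For \eqref{e3.y1.4} I would establish two matching bounds. Lemma~\ref{l3.x2}(ii) combined with \eqref{e3.y1.3} gives
\[
\cl_t(x,y)\ \ge\ \sup_{\lz\ge0}\big\{\sqrt\lz\,d_1(x,y)-\lz t\big\}=\frac{d_1(x,y)^2}{4t},
\]
the supremum being attained at $\lz=d_1(x,y)^2/(4t^2)$, which simultaneously yields the middle equality in \eqref{e3.y1.4}. For the reverse inequality, insert the constant-speed $d_1$-geodesic $\gz$ from $x$ to $y$ into the definition of $\cl_t$ and use $L=\tfrac14 L_1^2$:
\[
\cl_t(x,y)\le\int_0^t\tfrac14\,L_1\big(\gz(\theta),\gz'(\theta)\big)^2\,d\theta=\int_0^t\tfrac14\Big(\frac{d_1(x,y)}{t}\Big)^2 d\theta=\frac{d_1(x,y)^2}{4t}.
\]
The linearity \eqref{linear2} then follows at once: a subarc of a minimizing geodesic is itself minimizing, so $d_1(x,\gz(\theta))=\tfrac\theta t\,d_1(x,y)$, and substituting into \eqref{e3.y1.4} gives $\cl_\theta(x,\gz(\theta))=\tfrac\theta t\,\cl_t(x,y)$.

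For \eqref{e3.y1.5}, Theorem~\ref{t3.x6}(i) says any $y\in E^t_\lz(x)$ satisfies $\cl_t(x,y)=d_\lz(x,y)-\lz t$; by \eqref{e3.y1.3}--\eqref{e3.y1.4} this reads $\tfrac1{4t}d_1(x,y)^2=\sqrt\lz\,d_1(x,y)-\lz t$, i.e.\ $\big(d_1(x,y)-2\sqrt\lz\,t\big)^2=0$, so $d_1(x,y)=2\sqrt\lz\,t$ and $E^t_\lz(x)\subset\partial B_{d_1}(x,2\sqrt\lz\,t)$, which by the scaling \eqref{e3.y1.3} is exactly the $d_\lz$-sphere appearing in \eqref{e3.y1.5}. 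For the reverse inclusion, take $y$ with $d_1(x,y)=2\sqrt\lz\,t$ and a ray $\gz\in\Gamma_\lz(x)$ through $y$ (such a ray exists because, $d_\lz$ being bi-Lipschitz to the Euclidean metric, every point of $\rn$ lies on a $d_\lz$-minimizing geodesic ray from $x$); since $\partial_pH(x,p)=\{2A(x)p\}$ is single-valued, Lemma~\ref{l3.x7} forces $C_\gz\equiv 2\lz$, hence $\mathcal D_\gz(\theta)=\theta/(2\lz)$, $\mathcal D_\gz^{-1}(t)=2\lz t=d_\lz(x,y)$, and therefore $y=\gz\big(\mathcal D_\gz^{-1}(t)\big)\in E^t_\lz(x)$.

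I expect the only genuinely delicate point to be this last step: identifying the abstract set $E^t_\lz(x)$, built from $\Gamma_\lz(x)$ and the reparametrization $\mathcal D_\gz$, with a full metric sphere. It hinges on the explicit constancy $C_\gz\equiv 2\lz$ (which uses smoothness of $p\mapsto H(x,p)$, so that $\partial_pH$ is single-valued) and on geodesic completeness of $(\rn,d_\lz)$ (so that every point of the sphere lies on a ray from $x$) --- both special to this quadratic, globally defined setting; the remaining parts are routine consequences of the Lagrangian identities obtained in the first step.
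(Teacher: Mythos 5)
Your proposal is correct and follows essentially the same route as the paper: diagonalize $A(x)$ to compute $L_\lz=\sqrt\lz\,L_1$ and $L=\tfrac14 L_1^2$, integrate to obtain \eqref{e3.y1.3}, match the lower bound from Lemma~\ref{l3.x2}(ii) with the upper bound from a constant-speed $d_1$-geodesic to obtain \eqref{e3.y1.4}, and complete the square for \eqref{e3.y1.5}. The one place where you go beyond the paper is the reverse inclusion $\partial B_{d_1}(x,2\sqrt\lz\,t)\subset E^t_\lz(x)$: the paper simply asserts the equivalence, whereas you verify it directly from the definition of $E^t_\lz(x)$ via $\Gamma_\lz(x)$ and $\mathcal D_\gz$, using that $\partial_p H(x,p)=\{2A(x)p\}$ is single-valued so Lemma~\ref{l3.x7} forces $C_\gz\equiv 2\lz$; this is a worthwhile clarification since Theorem~\ref{t3.x6}(i) only gives the forward implication $y\in E^t_\lz(x)\Rightarrow\cl_t(x,y)=d_\lz(x,y)-\lz t$.
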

\begin{proof} %By dual formula of CP, we have
%\begin{eqnarray*}d_\lz(x,y)&&=\sup\{u(x)-u(y):\ H(z,Du(z))\le\lz\ a.\,e.\} \\
%&&=\sqrt\lz\sup\{(\sqrt\lz)^{-1}u(x)-(\sqrt\lz)^{-1}u(y):\ H(z,D[(\sqrt\lz)^{-1}u](z))\le1\ a.\,e.\} \\
%&&=\sqrt\lz\sup\{v(x)-v(y):\ H(z,Dv(z))\le1\ a.\,e.\} \\
%&&=\sqrt\lz d_1(x,y).
%\end{eqnarray*}
Observe that
$$L_\lz(x,q)=\sup_{H(x,p)\le\lz}p\cdot q=\sup_{|p|^2\le\lz}p\cdot(A^{-\frac12}(x)q)
=\sqrt\lz\big|A^{-\frac12}(x)q\big|%=\sqrt\lz\sqrt{A^{-1}(z)q\cdot q}
=\sqrt\lz L_1(x,q).$$
This yields (\ref{e3.y1.3}). From (\ref{e3.y1.3}), we see that
$$ d_\lz(x,y)-\lz t= \sqrt\lz d_1(x,y)-\lz t= -\lf(  \frac{d_1(x,y)}{2\sqrt t}-\sqrt{\lz t} \r)^2+\frac{d^2_1(x,y)}{4  t}.
$$
Hence we obtain that
$$\max_{\lz\ge0}\Big\{d_\lz(x,y)-\lz t\Big\} =\frac{d^2_1(x,y)}{4  t}.
$$
Observe that
\begin{eqnarray*}L(x,q)&&=\sup_{p\in\rn}\Big\{p\cdot q-\langle A(x)\cdot p, p\rangle\Big\}
=\sup_{p\in\rn}\Big\{p\cdot (A^{-\frac12}(x)q)-|p|^2\Big\}\\
&&=\sup_{p\in\rn}\Big\{-\big|p-\frac12(A^{-\frac12}(x)q)\big|^2+\frac14|A^{-\frac12}(x)q|^2\Big\}\\
&&= \frac14\big|A^{-\frac12}(x)q\big|^2.
 \end{eqnarray*}
Let $\gz\in\mathcal D\big(0,t; x, y; \overline U\big)$
be a $\ell_1$-length minimizing geodesic curve,  with constant speed $\frac{d_1(x,y)}{2t}$.
Since
$$\frac14\langle A^{-1}(\gz(s))\cdot\gz'(s), \gz'(s)\rangle
=\Big(\frac{d_1(x,y)}{2t}\Big)^2, \  {\rm{a.e.\ }} s\in [0,t],$$
we have that
\begin{eqnarray*}\cl_t(x,y)&&\le   \int_0^t \frac14A^{-1}(\gz(s))\gz'(s)\cdot\gz'(s) \,ds
 = \frac{d_1^2(x,y)}{4t}. \end{eqnarray*}
This, combined with Lemma \ref{l3.x2} (ii),
implies (\ref{e3.y1.4}).

It follows from  the above argument that $y\in E^t_\lz(x)$ iff
$$\frac1{4t}d_1^2(x,y)=\sqrt\lz d_1(x,y)-\lz t,$$
or, equivalently,
$$\sqrt\lz= \frac1{2t}d_1(x,y).$$
This implies that $y\in E^t_\lz(x)$ iff $d_1(x,y)= 2\sqrt\lz t$. Hence (\ref{e3.y1.5}) is proven.
\end{proof}

Motivated by Lemma 3.12 and Lemma 3.13, we would like to pose

\smallskip
\noindent {\bf Question 3.14.} Assume $H:\overline U\times\rn\to\rr_+$ satisfies (A1), (A2)$_{\rm{weak}}$, (A3), and (1.6),\\
(a) is it true that $\cl_t(x,y)=\sup_{\lz>0}(d_\lz(x,y)-\lz t )$ for all possible $t>0, x, y\in U$? \\
(b) does $\cl_t$ enjoy the linearity (\ref{linear2}) along all $d_\lz$-length minimizing curve $\gamma$ in $U$?

\section{Basic properties of Hamilton-Jacobi flows}
In this section, we will assume that $H(x,p)$ satisfies  (A1), (A2)$_\weak$, (A3), and (\ref{u-coercive}).
For  $r>0$, set $$U_r:=\Big\{x\in U\ | \  {\rm{dist}}(x,\partial U)>r\Big\}.$$

\subsection{Localization of $T^tu$}
\begin{lem}\label{l4.x1}
For each $r>0$, there exists  $  a_0(r)\ge 2$   such that for all $t\in\big(0,\,\frac{r}{a_0(r)}\big)$,
$x\in U_r$ and $y\in B(x,\, t)$,
we have
\begin{eqnarray}\label{e4.x0}
\cl_t(x,y)&=&\inf\Big\{\int_0^tL(\gz(\theta),\gz'(\theta))\,d\theta\ \big|\
\ \gamma\in \mathcal C\big(0,t; x, y; B(x, a_0(r)t)\big)\Big\}.
\end{eqnarray}
\end{lem}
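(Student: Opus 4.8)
The inequality ``$\cl_t(x,y)\le$ (right-hand side of \eqref{e4.x0})'' is essentially free: once $a_0(r)\ge 2$ and $t<r/a_0(r)$ we have $a_0(r)t<r\le\dist(x,\partial U)$, hence $B(x,a_0(r)t)\subset U\subset\overline U$, so every curve admissible in the right-hand infimum is also admissible in the definition of $\cl_t(x,y)$; shrinking the class of competing curves can only raise the infimum. The real content is the reverse inequality, and the guiding idea is that a near-optimal curve joining $x$ to the \emph{nearby} point $y$ simply cannot afford to travel far from $x$.

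First I would record a cheap upper bound. Since the segment $[x,y]$ lies in $B(x,t)\subset U$, the reparametrized segment $\tilde\gamma(\theta)=x+\tfrac{\theta}{t}(y-x)$, $\theta\in[0,t]$, which has $|\tilde\gamma'|=|y-x|/t<1$, is admissible and gives
$$\cl_t(x,y)\le\int_0^tL\big(\tilde\gamma(\theta),\tfrac{y-x}{t}\big)\,d\theta\le t\,C_1,\qquad C_1:=\sup_{z\in\overline U,\ |q|\le 1}L(z,q),$$
where $C_1<\infty$ because, by \eqref{u-coercive1}, $L$ is real-valued and upper semicontinuous on the compact set $\overline U\times\overline{B(0,1)}$. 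Note $C_1$ depends only on $H$, not on $r$ or $t$.

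Next comes the key length-versus-action estimate, where Lemma \ref{l3.x3} does the work. Suppose $\gamma:[0,t]\to\overline U$ joins $x$ to $y$ but exits $B(x,Kt)$ for some $K\ge 2$; let $\tau\in(0,t]$ be the first time $|\gamma(\tau)-x|=Kt$. Then $\gamma|_{[0,\tau]}$ is admissible for $\cl_\tau(x,\gamma(\tau))$, and using $d_U(x,\gamma(\tau))\ge|\gamma(\tau)-x|=Kt\ge K\tau$, the monotonicity of $M$, Lemma \ref{l3.x3}, and $L\ge 0$ on $[\tau,t]$,
$$\int_0^tL(\gamma,\gamma')\,d\theta\ \ge\ \int_0^\tau L(\gamma,\gamma')\,d\theta\ \ge\ \cl_\tau\big(x,\gamma(\tau)\big)\ \ge\ M\Big(\tfrac{d_U(x,\gamma(\tau))}{\tau}\Big)\,d_U\big(x,\gamma(\tau)\big)\ \ge\ M(K)\,Kt.$$
Since $M(K)\to\infty$ as $K\to\infty$ (Lemma \ref{l3.x3}), I fix once and for all $a_0(r):=\max\{2,K_0\}$, with $K_0$ chosen so that $M(K_0)K_0>C_1$; in fact $a_0$ may be taken independent of $r$.

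Finally I would assemble the pieces. Let $t\in(0,r/a_0(r))$, $x\in U_r$, $y\in B(x,t)$, and fix $\epsilon\in\big(0,\,t\,(M(a_0(r))a_0(r)-C_1)\big)$. Choose $\gamma\in\mathcal C(0,t;x,y;\overline U)$ with $\int_0^tL(\gamma,\gamma')\,d\theta\le\cl_t(x,y)+\epsilon\le tC_1+\epsilon<t\,M(a_0(r))\,a_0(r)$. By the previous paragraph $\gamma$ cannot exit $B(x,a_0(r)t)$, so $\gamma\in\mathcal C\big(0,t;x,y;B(x,a_0(r)t)\big)$, whence the right-hand infimum in \eqref{e4.x0} is $\le\int_0^tL(\gamma,\gamma')\,d\theta\le\cl_t(x,y)+\epsilon$. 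Letting $\epsilon\downarrow 0$ yields the reverse inequality and hence \eqref{e4.x0}. The only mildly delicate point is the geometric bookkeeping---checking $B(x,a_0(r)t)\subset U$ so that the straight segment and the truncated curve $\gamma|_{[0,\tau]}$ legitimately live in $\overline U$, and that the threshold $K_0$ depends only on $H$---but once Lemma \ref{l3.x3} is available there is no genuine obstacle.
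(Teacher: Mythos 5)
Your proof is correct and follows essentially the same strategy as the paper's: bound $\cl_t(x,y)\le Ct$ via the straight segment, then use Lemma~\ref{l3.x3} to show any curve that leaves $B(x,Kt)$ must accumulate action at least $M(K)Kt$, and choose $K$ so large that this exceeds the upper bound. The only (harmless) differences are that you take $C_1$ as a supremum over all of $\overline U$ rather than over $U_{r/2}$, which lets $a_0$ be $r$-independent, and you estimate only the initial arc $\gamma|_{[0,\tau]}$ together with $L\ge 0$ on $[\tau,t]$, whereas the paper bounds both arcs on either side of the exit time; both variants give the same conclusion.
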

\begin{proof} For $x\in U_r$, $0<t<\frac{r}2 $, and $y\in B(x,t)$,
since $\displaystyle\gamma(\theta)=x+\frac{\theta}{t}(y-x): [0,t]\to U$ joins $x$ to $y$,
we have that
\begin{equation}\label{e4.x01}
 \cl_t(x,y)\le   \int_0^tL\lf(x+\frac\theta t(y-x),\frac1 t(y-x)\r)\,d\theta\le
t\sup_{z\in U_{\frac{r}2}}\sup_{|q|\le 1}   L(z,q):=tC(r)<+\infty.
\end{equation}
For any $a\ge 2$, $t<\frac{r}{a}$, $z\notin B(x,at)$, $y\in B(x,t)$, and  $0<s<t$,  it follows
that $|y-z|> \frac{at}2$. By Lemma \ref{l3.x3}, we have
$$ \cl_s(x,z)\ge   M\lf(\frac{|z-x|}{s}\r){|z-x|} \ge   s   aM(a),  $$
and $$ \cl_{t-s}(z,y)\ge   M\lf(\frac{|y-z|}{t-s}\r)|y-z|\ge \frac{a(t-s)}2M(\frac{a}2). $$
Observe that there exists $ a_0=a_0(r)\ge 2 $ such that
$\frac{a}2 M(\frac{a}2)> C(r)+1$ whenever $a\ge a_0(r)$.
If $\gz\in\mathcal C\big(0,t; x, y; \overline U\big)$ satisfies
$$\cl_t(x,y)+t\ge \int_0^t L(\gamma(\theta), \gamma'(\theta))\,d\theta,$$
then we must have that $\gamma([0,t])\subset B(x,a_0(r)t)$. For, otherwise,
$\gz([0,t])\cap (U\setminus B(x,a_0t))\ne\emptyset$
and there exists $s_*\in(0,t)$ such that $\gz(s_*)\notin B(x,a_0t)$.
Hence we obtain
\begin{eqnarray*}\cl_t(x,y)+t&&\ge\int_0^tL(\gz(\theta),\gz'(\theta))\,d\theta\\
&&= \int_0^{s_*}L(\gz(\theta),\gz'(\theta))\,d\theta+\int_{s_*}^tL(\gz(\theta),\gz'(\theta))\,d\theta\\
&&\ge \cl_{s_*}(x,\gz(s_*)) +\cl_{t-s_*}(\gz(s_*),y)\\
&&\ge s_*aM(a)+(t-s_*)\frac{a}2M(\frac{a}{2})> t(C(r)+1).
\end{eqnarray*}
This contradicts to (\ref{e4.x01}). Hence (\ref{e4.x0}) holds.
\end{proof}

\begin{lem}\label{l4.x2}
(i) For any $\alpha,r>0,$
there exists $\eta_0=\eta_0(\alpha,r)>0$ such that if $0<t<\eta_0$, then
\begin{eqnarray}\label{e4.x2.0}
T^t u(x)=\sup_{y\in B(x,r)}\Big\{u(y)-\cl_t(x,y)\Big\}, \ \forall\  x\in U_r,%\ \mbox{\rm and}\  T_t u(x)=\inf_{y\in B(x,r)\cap  U}\left[u(y)+\cl_t(y,x)\right].
\end{eqnarray}
whenever
  $\osc_U u \le \alpha$.

\noindent(ii) For every $K,\alpha,r>0,$
there exists $a_K\ge 2$    such that if $0<t<\min\big\{\eta_0(\az,\frac{r}2),\frac{r}{a_K}\big\}$, then
\begin{eqnarray}\label{e4.x2.1}
T^t u(x)=\sup_{y\in B(x,a_Kt) }\Big\{u(y)-\cl_t(x,y)\Big\}, \ \forall\  x\in U_r,
  %\ \mbox{\rm and}\  T_t u(x)=\inf_{y\in B(x,r)\cap  U}\left[u(y)+\cl_t(y,x)\right].
\end{eqnarray}
whenever   $\osc _Uu\le\az$ and $\lip\big(u,U_{\frac{r}2}\big)\le K$.
\end{lem}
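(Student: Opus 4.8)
The plan is to localize the supremum defining $T^tu(x)=\sup_{y\in\overline U}\{u(y)-\cl_t(x,y)\}$ by using the coercivity bound $\cl_t(x,y)\ge M\big(d_U(x,y)/t\big)\,d_U(x,y)$ from Lemma~\ref{l3.x3} together with $d_U(x,y)\ge|x-y|$: points $y$ too far from $x$ give a value strictly below the trivial lower bound $T^tu(x)\ge u(x)-\cl_t(x,x)=u(x)$. Concretely, for (i) I would note first that $\cl_t(x,x)=0$ gives $T^tu(x)\ge u(x)\ge\inf_U u\ge\sup_U u-\osc_U u\ge\sup_U u-\alpha$. If $y\in\overline U$ with $|x-y|\ge r$, then $d_U(x,y)\ge r$, and since $M$ is increasing the map $s\mapsto M(s/t)s$ is increasing, so Lemma~\ref{l3.x3} yields $\cl_t(x,y)\ge M(r/t)\,r$ and hence $u(y)-\cl_t(x,y)\le\sup_U u-M(r/t)\,r$. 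As $M$ is increasing with $\lim_{s\to\fz}M(s)=\fz$, one may pick $\eta_0(\alpha,r)>0$ so that $M(r/t)\,r>\alpha$ for all $t\in(0,\eta_0(\alpha,r))$; then every $y$ with $|x-y|\ge r$ satisfies $u(y)-\cl_t(x,y)<\sup_U u-\alpha\le T^tu(x)$ and is irrelevant to the supremum. Since $x\in U_r$ forces $B(x,r)\subset U$, this gives \eqref{e4.x2.0}.

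For (ii) I would first fix $a_K\ge2$ depending \emph{only} on $K$ with $M(a_K)>K$ (possible since $M$ increases to $+\fz$), and take $0<t<\min\{\eta_0(\alpha,r/2),r/a_K\}$. Applying (i) with $r$ replaced by $r/2$ — legitimate since $x\in U_r\subset U_{r/2}$ and $t<\eta_0(\alpha,r/2)$ — reduces the supremum to $y\in B(x,r/2)$, so it remains to discard $y\in B(x,r/2)\setminus B(x,a_Kt)$. For such $y$ one has $a_Kt\le|x-y|<r/2$, hence $y\in U_{r/2}$ (because $\dist(y,\partial U)\ge\dist(x,\partial U)-|x-y|>r-\frac r2$), so $\lip(u,U_{r/2})\le K$ gives $u(y)\le u(x)+K|x-y|$; meanwhile $|x-y|/t\ge a_K$ and monotonicity of $M$ give $\cl_t(x,y)\ge M(|x-y|/t)|x-y|\ge M(a_K)|x-y|$. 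Combining, $u(y)-\cl_t(x,y)\le u(x)-(M(a_K)-K)|x-y|<u(x)\le T^tu(x)$, so these $y$ do not affect the supremum; since $t<r/a_K$ gives $B(x,a_Kt)\subset B(x,r)\subset U$, we obtain \eqref{e4.x2.1}.

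The computations are elementary; the one delicate point — and the thing to get right — is that $a_K$ in (ii) must depend on $K$ alone, not on $r$, $\alpha$, or $x$, since this uniformity is what later permits iterating the semigroup property of $T^t$. This is achieved precisely because the ``Lipschitz regime'' $a_Kt\le|x-y|<r/2$ is controlled by the scale-free condition $M(a_K)>K$, whereas it is only the ``oscillation regime'' $|x-y|\ge r/2$ that forces $t$ small through $\eta_0$. I note that Lemma~\ref{l4.x1} is not needed for this argument, though it records a complementary description of $\cl_t$ on small balls.
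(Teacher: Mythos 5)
Your proof is correct and follows essentially the same route as the paper's: in part (i) you use the coercivity bound $\cl_t(x,y)\ge M(|x-y|/t)|x-y|$ from Lemma~\ref{l3.x3} to discard $y$ with $|x-y|\ge r$ by choosing $\eta_0$ so that $M(r/\eta_0)r>\alpha$, and in part (ii) you apply (i) at scale $r/2$, then discard the annulus $a_Kt\le|x-y|<r/2$ using the Lipschitz bound on $U_{r/2}$ together with $M(a_K)>K$, with $a_K$ depending only on $K$. Your remark that Lemma~\ref{l4.x1} is not needed here is also consistent with the paper's proof.
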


\begin{proof} (i)
For any $r,\alpha>0$, choose a sufficiently small $\eta_0=\eta_0(\alpha, r)>0$ so that
$$M\big(\frac{r}{\eta_0}\big)>\frac{\alpha}{r}+1.$$
Then for any $0<t<\eta_0$, $x\in U_{r}$ and $y\in U\setminus\overline{ B(x,r)}$, we have
\begin{eqnarray*}
u(y)-\cl_t(x,y) &&\leq u(y)-  tM\big(\big|\frac{y-x}{t}\big|\big)\big|\frac{y-x}{t}\big| \\
&&\leq u(y)-rM\big(\frac{r}{\eta_0}\big)\\
&&\leq u(y)-(\alpha+r)\\
&&< u(y)-\osc_{  U}u -r\\
&&<  u(x)\le T^tu(x).
\end{eqnarray*}
This  implies (\ref{e4.x2.0}).

 (ii) If $\osc u\le\az$ and $\lip(u, U_{\frac{r}2})\le K$, then by (i) we have that,
 for $x\in U_{\frac{r}2}$ and $t<\eta_0(\az,\frac{r}2)$,
 \begin{eqnarray*}
T^t u(x)=\sup_{y\in B(x,\frac{r}2) }\Big\{u(y)-\cl_t(x,y)\Big\}.
\end{eqnarray*}
Let $a_K\ge 2$ be such that
$M\left(a_K\right)>K.$
If $0<t<\min\big\{\eta_0(\az,\frac{r}2),\frac{r}{a_K}\big\}$,
 $x\in U_r$, and $y\in  B(x,\frac{r}2)\setminus  B(x,a_Kt)$,  then we have
\begin{eqnarray*}
u(y)-\cl_t(x,y) &&\leq u(y)-  tM\big(\big|\frac{y-x}{t}\big|\big)\big|\frac{y-x}{t}\big| \\
&&\leq u(y)-u(x)- M\left(a_K\right)|x-y|+u(x)\\
&&\leq  (\lip(u,U_{\frac{r}2})  - K)|x-y|+u(x)\\
&&<  u(x)\le T^tu(x).
\end{eqnarray*}
This implies (\ref{e4.x2.1}).
\end{proof}

\begin{rem}\label{r4.x3}\rm From Lemma \ref{l4.x2}, we can see that if $u\in C(U)$ then
$$\lim_{t\downarrow0}T^t u=\lim_{t\downarrow0}T_t u=u$$
uniformly on any compact subset of $U.$
In fact, for each compact set $K\Subset U$,  $r\in(0,\frac12\dist(K,\partial U)) $ and $0<t<\eta_0(\osc_{U_r}u,r)$, we have
$$u(x)\leq T^t u(x)\leq\sup_{\overline{B}(x,r) } u.$$
The  uniform convergence of $T^tu(x)$ to $u(x)$ on $K$, as $t\to 0$,  follows directly from
the uniform continuity of $u$ on $\widehat{K}:=\big\{x\in U\ |\ {\rm{dist}}(x,K)\le\frac12\dist(K,\partial U)\big\}$.
\end{rem}

\subsection{Semigroup property of $T^tu$}

\begin{lem}\label{l4.x4} For $u\in L^\fz(U)$ and $t,s>0$,
it holds that
\begin{eqnarray}\label{e4.x4.0}
 T^{t+s}u(x) =T^t(T^s u)(x), \  \forall \ x\in U.% \quad\mbox{\rm and}\quad  T_{t+s}u(x) =T_t(T_s u)(x).
 \end{eqnarray}
\end{lem}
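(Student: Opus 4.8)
The plan is to reduce \eqref{e4.x4.0} to the dynamic programming principle for the action function, namely
\begin{equation}\label{e4.dpp}
\cl_{t+s}(x,z)=\inf_{y\in\overline U}\big\{\cl_t(x,y)+\cl_s(y,z)\big\},\qquad x,z\in\overline U,\ t,s>0,
\end{equation}
and then to substitute \eqref{e4.dpp} into the defining formula $T^\tau u(x)=\sup_{y\in\overline U}\{u(y)-\cl_\tau(x,y)\}$ and interchange two suprema. Note first that, by \eqref{e2.x3}, one has $\inf_U u\le T^su\le\sup_U u$, so $T^su\in L^\fz(U)$ and $T^t(T^su)$ is well defined.

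I would establish \eqref{e4.dpp} by proving the two inequalities separately. For ``$\le$'', fix $y\in\overline U$ and take arbitrary competitors $\gz_1\in\mathcal C(0,t;x,y;\overline U)$ and $\gz_2\in\mathcal C(0,s;y,z;\overline U)$; their concatenation $\gz(\theta)=\gz_1(\theta)$ on $[0,t]$ and $\gz(\theta)=\gz_2(\theta-t)$ on $[t,t+s]$ is continuous at $\theta=t$ (since $\gz_1(t)=y=\gz_2(0)$), takes values in $\overline U$, and is Lipschitz on $[0,t+s]$ because it is Lipschitz on each of the two subintervals with a common constant. Hence $\gz\in\mathcal C(0,t+s;x,z;\overline U)$, and by additivity of the integral and a change of variables $\cl_{t+s}(x,z)\le\int_0^{t+s}L(\gz,\gz')\,d\theta=\int_0^tL(\gz_1,\gz_1')\,d\theta+\int_0^sL(\gz_2,\gz_2')\,d\theta$; taking infima over $\gz_1,\gz_2$ and then over $y\in\overline U$ gives ``$\le$''. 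For ``$\ge$'', given any $\gz\in\mathcal C(0,t+s;x,z;\overline U)$ I set $y:=\gz(t)\in\overline U$; then $\gz|_{[0,t]}\in\mathcal C(0,t;x,y;\overline U)$ and $\theta\mapsto\gz(t+\theta)$ belongs to $\mathcal C(0,s;y,z;\overline U)$, so
$$\int_0^{t+s}L(\gz,\gz')\,d\theta=\int_0^tL(\gz,\gz')\,d\theta+\int_t^{t+s}L(\gz,\gz')\,d\theta\ge\cl_t(x,y)+\cl_s(y,z)\ge\inf_{y'\in\overline U}\big\{\cl_t(x,y')+\cl_s(y',z)\big\},$$
and taking the infimum over $\gz$ yields ``$\ge$''. (If no admissible curve exists for one of the quantities involved, the corresponding inequality is trivially true since the infimum over an empty set is $+\fz$.)

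With \eqref{e4.dpp} at hand, the conclusion will follow from the chain of identities
\begin{eqnarray*}
T^{t+s}u(x)&=&\sup_{z\in\overline U}\Big\{u(z)-\inf_{y\in\overline U}\big(\cl_t(x,y)+\cl_s(y,z)\big)\Big\}
=\sup_{z\in\overline U}\ \sup_{y\in\overline U}\Big\{u(z)-\cl_t(x,y)-\cl_s(y,z)\Big\}\\
&=&\sup_{y\in\overline U}\Big\{\sup_{z\in\overline U}\big(u(z)-\cl_s(y,z)\big)-\cl_t(x,y)\Big\}
=\sup_{y\in\overline U}\Big\{T^su(y)-\cl_t(x,y)\Big\}=T^t(T^su)(x),
\end{eqnarray*}
valid for every $x\in U$, where the interchange of the two suprema is legitimate for extended-real-valued functions.

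Since there is no constraint forcing curves to remain inside a prescribed ball here (in contrast with Lemmas \ref{l4.x1}--\ref{l4.x2}), every curve constructed or split above automatically stays in $\overline U$, and so there is no real obstacle: the only points requiring (minor) care are the continuity and Lipschitz property of the glued curve across the junction $\theta=t$, and the observation that possibly infinite values of $\cl$ cause no trouble because every step is kept at the level of inequalities and extended-real-valued suprema.
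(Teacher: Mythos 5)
Your proposal is correct and rests on the same two ingredients as the paper's proof: the subadditivity $\cl_{t+s}(x,z)\le\cl_t(x,y)+\cl_s(y,z)$ from concatenating competitors, and the reverse estimate obtained by splitting a near-optimal curve at time $t$. The only organizational difference is that you isolate the dynamic programming principle \eqref{e4.dpp} as an explicit identity and then derive \eqref{e4.x4.0} by a clean interchange of suprema (which is always legal for extended-real-valued expressions, as you note), whereas the paper folds the two directions into the operator computation directly, using an $\ez$-argument to produce the splitting point $z=\gz(t)$ and pass to the limit. Your version makes the underlying structure more transparent and is a little tidier, at the modest cost of stating and proving one extra identity; the paper's version avoids that extra step at the cost of some bookkeeping with $\ez$'s. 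Both are sound, and your handling of the possibly infinite values of $\cl$ and of the gluing at $\theta=t$ addresses the only points that need care.
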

\begin{proof}
First, we claim that for any $x, y, z\in U$,
\begin{equation}\label{e4.x4.1}
\cl_{t+s}(x,y)\le \cl_t(x,z)+\cl_s(z,y).
\end{equation}
In fact, let $\alpha\in \mathcal C\big(0,t; x, z; \overline U\big)$
 and $\bz\in\mathcal C\big(0,s; z, y; \overline U\big)$.
Define $\gamma:[0, t+s]\to U$  by
$$\gamma(\theta)=\begin{cases} \alpha(\theta) & 0\le \theta\le t,\\
\beta(\theta-t) & t\le \theta \le t+s.
\end{cases}$$
Then $\gamma\in\mathcal C\big(0, t+s; x, y; \overline U\big)$, and
$$\int_0^{t+s}L(\gamma(\theta), \gamma'(\theta))\,d\theta
=\int_0^{t}L(\alpha(\theta), \alpha'(\theta))\,d\theta
+\int_0^{s}L(\beta(\theta), \beta'(\theta))\,d\theta.$$
Taking infimum over all such curves $\az$ and $\bz$, this yields (\ref{e4.x4.1}).
By the definition of $T^t u$ and (\ref{e4.x4.1}), we have
\begin{eqnarray*}
T^t(T^s u)(x)&&=\sup_{z\in U}\Big\{T^s u(z)-\cl_t(x,z)\Big\}\\
&&=\sup_{z, y\in U}\Big\{u(y)-\cl_s(z,y)-\cl_t(x,z)\Big\}\\
&&\le \sup_{y\in U}\Big\{u(y)-\cl_{t+s} (x,y)\Big\}
=T^{t+s}u(x).
\end{eqnarray*}
To prove the opposite direction of the inequality, recall from the definition of $T^{t+s}u$
that for any $\epsilon>0$ there is a point $y\in  U$ such that
\begin{eqnarray}\label{e4.x1}
T^{t+s} u(x)\le  u(y )- \cl_{ t+s}(x,y ) +\ez.
\end{eqnarray}
By the definition of $\cl_{t+s}(\cdot,\cdot)$,
there is a $\gz\in \mathcal C\big(0,t+s; x, y; \overline U\big)$ such that
$$\cl_{ t+s}(x,y)\ge\int_0^{t+s}L(\gz(\theta),\gz'(\theta))\,d\theta-\ez.$$
Set $z=\gz(t)$ and define $\bz(\theta)=\gz(\theta+t)$ for $\theta\in[0,s]$.
Set $\az(\theta)=\gamma(\theta), \theta\in [0,\,t ]$. Then $\az\in \mathcal C\big(0, t; x, z;\overline U\big)$,
and $\bz\in\mathcal C\big(0,s; z, y; \overline U\big)$. It follows that
$$\cl_{ t+s}(x,y)\ge\int_0^{t }L(\az(\theta),\az'(\theta))\,d\theta+\int_0^{s }L(\bz(\theta),\bz'(\theta))\,d\theta -\ez\ge \cl_t(x,z)+\cl_s(z,y)-\ez.$$
Hence we obtain
\begin{eqnarray*}
T^{t+s}u(x)&&\le u(y)-\cl_t(x,z)-\cl_s(z,y)+2\ez\\
&&\le (T^s u)(z)-\cl_t (x,z)+2\ez\\
&&\le T^t(T^su)(x)+2\ez.
\end{eqnarray*}
Sending $\epsilon\to 0$, this implies
$$T^{t+s}u(x)\le T^t(T^s u)(x).$$
Thus (\ref{e4.x4.0}) is proven.
\end{proof}

\subsection{Lipschitz continuity of $(t,x)\mapsto T^t u(x)$}

%\begin{lem} Let $t,\lz>0 $ and $x\in\rn$.
%Then   we have \begin{equation}\label{e2.x4}T^t[-d_\lz(x,\cdot)](x)\le -d_\lz(x,x)+\lz t\quad \forall x\in\rn , \end{equation}
%and
% for all $x\in\rn$, \begin{equation}\label{e2.x5}T^t[ d_\lz(x,\cdot)](x)\le d_\lz(x,x)+\lz t \quad \forall x\in\rn,  \end{equation}
%where the equality holds if there exists an $\gz\in\Gamma_\lz(x)$ such that $\gz\cap E^t_\lz(x)\ne\emptyset$.
%\end{lem}
%\begin{proof}
%
%By \eqref{e2.x3}, we know that
%$$T^t[-d_\lz(x,\cdot)](x)=\sup_{y}[-d_\lz(x,y)-\cl_t(x,y)]\le \sup_{y}[-d_\lz(x,y)- d_\lz(x,y)+\lz t]\le -d_\lz(x,x)+\lz t. $$
%
%Similarly, by \eqref{e2.x3}, we know that
%$$T^t[ d_\lz(x,\cdot)](x)=\sup_{y}[d_\lz(x,y)- \cl_t(x,y)]\le \sup_{y}[d_\lz(x,y)- d_\lz(x,y)+\lz t]\le d_\lz(x,x)+\lz t. $$
%When $\gz\in\Gamma_\lz(x)$ such that $\gz\cap E^t_\lz(x)\ne\emptyset$, we take $y\in \gz\cap E^t_\lz(x)$ and obtain
%$$d_\lz(x,y)=d_\lz(x,x)+d_\lz(x,y)$$
%and $d_\lz(x,y)-\lz t=\cl_t(x,y)$.  This shows that
%$$T^t[ d_\lz(x,\cdot)](x)\ge[d_\lz(x,y)- \cl_t(x,y)]=  [d_\lz(x,y)- d_\lz(x,y)+\lz t]= d_\lz(x,x)+\lz t $$
%which gives \eqref{e2.x5}.
%
%
%\end{proof}

\begin{lem}\label{l4.x5}
For each $K,\az,r>0$, set $$\eta_1(K,\az,r):=\min\Big\{\eta_0(\az,\frac{r}2),
\frac{r}{a_K}, \frac{r}{a_0(r)}\Big\},$$
where $a_0(r)$ is given by Lemma \ref{l4.x1} and $\eta_0(\az,\frac{r}2),a_K$  are given
by Lemma \ref{l4.x2}. If $u:U\to\rr$ satisfies
$\osc _Uu\le\az$  and $\lip(u,U_{\frac{r}2})\le K$, then
the map $(t,x)\mapsto T^t u(x)$ is  Lipschitz continuous on $  [0,\,\eta_1(K,\az,r)]\times U_r$.
\end{lem}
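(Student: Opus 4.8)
The plan is to obtain the Lipschitz bound in the $x$-variable (uniformly in $t$) and the Lipschitz bound in the $t$-variable, and then combine them by the triangle inequality. Keep $u$ with $\osc_Uu\le\az$, $\lip(u,U_{r/2})\le K$ fixed and write $\eta_1=\eta_1(K,\az,r)$. One elementary estimate will be invoked repeatedly: if $v\in L^\fz(U)$ has $\osc_Uv\le\az$ and is $K'$-Lipschitz on an interior subdomain containing the relevant ball $B(x,r/2)$, then for $0<\sz\le\eta_1$,
\[
0\le T^\sz v(x)-v(x)\le\kz(K')\sz,\qquad\kz(K'):=\sup_{\mu\ge0}\mu\,(K'-M(\mu))_+<\fz,
\]
finiteness of the supremum following from $M(\mu)\to\fz$ (Lemma \ref{l3.x3}). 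Indeed, by \eqref{e3.x3.2} and $\osc_Uv\le\az$ the quantity $v(y)-v(x)-\cl_\sz(x,y)$ is negative once $|y-x|\ge r/2$ (since $\sz\le\eta_1\le\eta_0(\az,r/2)$), so the supremum defining $T^\sz v(x)$ is attained with $|y-x|<r/2$, and there, using $d_U\ge|\cdot|$ and monotonicity of $a\mapsto M(a)a$, $v(y)-v(x)-\cl_\sz(x,y)\le K'|y-x|-M(|y-x|/\sz)|y-x|\le\kz(K')\sz$; the lower bound is \eqref{e2.x3}.

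For the $x$-Lipschitz bound I would show $|T^tu(x_1)-T^tu(x_2)|\le C|x_1-x_2|$ for $x_1,x_2\in U_r$, $0\le t\le\eta_1$, with $C=C(K,\az,r,H)$ (the case $t=0$ being the hypothesis on $u$). Say $T^tu(x_1)\ge T^tu(x_2)$ and $t>0$. If $|x_1-x_2|\ge t/4$, the elementary estimate with $v=u$ gives $|T^tu(x_i)-u(x_i)|\le\kz(K)t$, so the difference is $\le2\kz(K)t+|u(x_1)-u(x_2)|\le(8\kz(K)+K)|x_1-x_2|$. If $|x_1-x_2|<t/4$ (so $[x_1,x_2]\subset U$): fix $\ez>0$; by Lemma \ref{l4.x2}(ii) choose a near-maximizer $y\in B(x_1,a_Kt)\subset U_{r/2}$ for $T^tu(x_1)$, whence $\cl_t(x_1,y)\le u(y)-u(x_1)+\ez\le Ka_Kt+\ez$; by Lemma \ref{l4.x1} (whose confinement argument applies equally for such $y$, only enlarging the ball) choose $\bz\in\mathcal C(0,t;x_1,y;\overline U)$ contained in some $B(x_1,a't)\subset U$ with $\int_0^tL(\bz,\bz')\le\cl_t(x_1,y)+\ez$. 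Then \eqref{e3.x3.1} forces $\ell:=\int_0^t|\bz'|\le V't$ for a constant $V'=V'(K,\az,r,H)$ (split the integral at the level where $M=1$). With $\tau=|x_1-x_2|\le t/4$, an averaging argument produces $\tz_0\in[0,t-2\tau]$ with $\int_{\tz_0}^{\tz_0+2\tau}|\bz'|\le4V'\tau$. I then concatenate into $\widetilde\bz\in\mathcal C(0,t;x_2,y;\overline U)$: the segment $x_2\to x_1$ in time $\tau$ (speed $1$), $\bz|_{[0,\tz_0]}$, the segment $\bz(\tz_0)\to\bz(\tz_0+2\tau)$ in time $\tau$ (speed $\le4V'$), and $\bz|_{[\tz_0+2\tau,t]}$; the total time is $t$, and the two inserted segments lie in $[x_1,x_2]$, resp. in the convex ball $B(x_1,a't)$. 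Since $L\ge0$,
\[
\cl_t(x_2,y)\le C_0\tau+\int_0^tL(\bz,\bz')+C_{V'}\tau\le\cl_t(x_1,y)+\ez+(C_0+C_{V'})|x_1-x_2|,
\]
with $C_0=\sup_{z\in\overline U,|q|\le1}L(z,q)$ and $C_{V'}=\sup_{z\in\overline U,|q|\le4V'}L(z,q)$ (both finite by upper semicontinuity of $L$ on a compact set). Hence $T^tu(x_2)\ge u(y)-\cl_t(x_2,y)\ge T^tu(x_1)-(C_0+C_{V'})|x_1-x_2|-2\ez$; sending $\ez\to0$ gives the bound with $C=\max\{8\kz(K)+K,\,C_0+C_{V'}\}$, independent of $t$.

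For the $t$-Lipschitz bound, fix $x\in U_r$ and $0\le s<t\le\eta_1$. If $s=0$ the elementary estimate gives $0\le T^tu(x)-u(x)\le\kz(K)t$. If $s>0$, the semigroup property (Lemma \ref{l4.x4}) gives $T^tu(x)=T^{t-s}(T^su)(x)$; also $\osc_U(T^su)\le\az$ by \eqref{e2.x3}, and $\lip(T^su,\cdot)\le C$ locally (uniformly in $s$) by the previous step; the elementary estimate applied to $v=T^su$, $\sz=t-s$ then yields $0\le T^tu(x)-T^su(x)\le\kz(C)(t-s)$. Combining with the $x$-estimate, $|T^tu(x)-T^su(x')|\le\kz(C)|t-s|+C|x-x'|$ on $[0,\eta_1]\times U_r$, which is the asserted Lipschitz continuity.

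The crux is the $x$-Lipschitz step: a perturbation of the base point in $\cl_t(\cdot,y)$ cannot be controlled by any modulus of continuity of $H$, which is merely lower semicontinuous, so it must be handled by surgery on near-minimizing curves. The two delicate points there are (i) the $L^1$-bound $\int_0^t|\bz'|\le V't$, which rests on the uniform superlinearity encoded in \eqref{e3.x3.1}--\eqref{e3.x3.2} and is what makes the inserted segments have bounded speed, hence bounded Lagrangian cost; and (ii) the averaging that selects a short cut-window of small total speed along $\bz$. I also expect the interplay between the various $\eta$-thresholds and the nesting of the interior subdomains (so that the $x$-estimate can be fed into the $t$-estimate with the required Lipschitz bounds for $T^su$) to require a little care, but this is routine.
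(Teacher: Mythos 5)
Your overall strategy matches the paper's in shape (prove Lipschitzness in $x$ and in $t$ separately and combine), but the order and the hard step are genuinely different. The paper proves the $t$-Lipschitz bound first, by picking a near-minimizer $z$ and a near-minimizing curve $\gamma$ for $\cl_t(x,z)$ (confined via Lemma \ref{l4.x1}), splitting the cost at time $s$, and then controlling the tail $\cl_{t-s}(\gamma(s),z)-\big(u(z)-u(\gamma(s))\big)$ by the intrinsic-cone bound $\cl_{t-s}\ge d_{\lambda_K}-\lambda_K(t-s)$ with $\lambda_K$ chosen so that $r_{\lambda_K}\ge K$; the $x$-Lipschitz bound then follows cheaply by prepending a unit-speed line segment from $x$ to $y$ to a near-minimizer for $T^su(y)$ with $s=t-|x-y|$, together with the already-established $t$-Lipschitz estimate. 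You, by contrast, attack the $x$-Lipschitz bound first by surgery on a near-minimizing curve $\beta$ for $\cl_t(x_1,y)$: the key observations are (a) near-minimality plus the superlinearity $L\ge M(|q|)|q|$ give the uniform $L^1$-speed bound $\int_0^t|\beta'|\le V't$ (split at a level $A$ with $M(A)\ge 1$), and (b) averaging over windows of length $2\tau=2|x_1-x_2|$ produces a low-cost cut window, so that inserting the segment $x_2\to x_1$ at unit speed and shortcutting the window at speed $\le 4V'$ keeps both inserted pieces in compact sets where $L$ is bounded by upper semicontinuity; you then get the $t$-Lipschitz bound for free from the semigroup identity $T^{t}u=T^{t-s}(T^su)$ and a one-step estimate applied to the (now-known Lipschitz) function $T^su$. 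Both routes are sound. Your surgery argument is a self-contained, rather classical Lagrangian-mechanics technique and does not need the $d_{\lambda_K}$ comparison at all; the paper's $d_{\lambda_K}$ device avoids the $L^1$-speed/averaging machinery and keeps all curve manipulations at the level of time reparametrizations. Two small loose ends in your write-up (both acknowledged): the confinement radius for near-minimizers of $\cl_t(x_1,y)$ with $y\in B(x_1,a_Kt)$ must be enlarged from $a_0(r)t$ to roughly $\max\{2a_K,a_0(r)\}t$, which may force shrinking $\eta_1$ (the paper's Step~1, Case~b has the same imprecision when it invokes Lemma \ref{l4.x1} for $z\in B(x,a_Kt)$); and the feeding of the $x$-estimate for $T^su$ into the $t$-step requires the $x$-estimate on a slightly larger interior subdomain, so the thresholds must be nested once, but this is purely bookkeeping. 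Neither affects the correctness of the approach.
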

\begin{proof} We divide it into three steps.

\smallskip
{\noindent \it Step 1.} If  $ 0\leq s<t< \eta_1(K,\az,r) $
and $x\in U_r$, then
\begin{equation}\label{e4.x1.0}
0\le T^tu(x)-T^su(x)\le C(K,\az,r)|t-s|.
\end{equation}
To see this, observe that, by Lemma \ref{l4.x2},
\begin{eqnarray}
T^t u(x) &&=\sup_{y\in B(x,a_Kt) }\Big\{u(y) -\cl_t(x,y)\Big\}.\nonumber
\end{eqnarray}
Now we consider two cases of $s$.

\smallskip
\noindent {\it Case a: $s=0$.} It follows $T^su(x)=u(x)$. Since $\cl_t(x,y)\ge0$, we have
 \begin{eqnarray*}
0\le T^t u(x)-T^su(x) &&\le \sup_{y\in B(x,a_Kt) }\Big\{u(y) -u(x)\Big\} \le a_KKt
\end{eqnarray*}
so that (\ref{e4.x1.0}) holds.

\smallskip
\noindent{\it Case b: $0<s<t$.} Then there exists a point $z\in B(x,a_Kt)$ such that
\begin{eqnarray}\label{e4.x2}
T^{t}u(x)\leq u(z)-\cl_{t}(x,z)+|t-s|.
\end{eqnarray}
Since $t<\frac{r}{a_0(r)}$,
 there exists a Lipschitz $\gamma:[0,t]\to B(x,a_0(r) t)$ joining $x$ to $z$ such that
\begin{eqnarray*}
\cl_{t}(x,z) \geq\int_0^{t}L(\gamma(\theta),\gamma'(\theta))\,d\theta-|t-s|.
\end{eqnarray*}
Since
\begin{eqnarray*}
 \int_0^{t}L(\gamma(\theta),\gamma'(\theta))\,d\theta%&&=  \int_0^{s }L(\gamma(\theta),\gamma'(\theta))\,d\theta+   \int_0^{t-s }L(\gamma(s+\theta),\gamma'(s+\theta))\,d\theta\\
  &&\ge \cl_{s}(x,\gz(s))+\cl_{t-s}(\gz(s),z),
\end{eqnarray*}
we then obtain
\begin{eqnarray*}
\cl_{t}(x,z) \geq\cl_{s}(x,\gz(s))+\cl_{t-s}(\gz(s),z)-|t-s|.
\end{eqnarray*}
Substituting this into \eqref{e4.x2}, we arrive at
\begin{eqnarray*}
T^{t}u(x)&&\leq u(z)-\cl_{s}(x,\gz(s))-\cl_{t-s}(\gz(s),z)+2|t-s|\\
&&=u(\gamma(s))-\cl_{s}(x,\gz(s))+u(z)-u(\gamma(s))-\cl_{t-s}(\gz(s),z)+2|t-s|\\
&&\leq T^s u(x)+u(z)-u(\gamma(s))-\cl_{t-s}(\gz(s),z)+2|t-s|.
\end{eqnarray*}
Since  $\gz(s)\in B(x,a_0(r)t)\subset U_{\frac{r}2}$, we have that
$$|u(z)-u(\gamma(s))|\le K|z-\gamma(s)|.$$
By Lemma \ref{l2.w16}, there exists  $\lambda_K>0$ such that
$$d_{\lambda_{K}}(\gamma(s),z)\geq K|z-\gamma(s)|.$$
Hence we have that
$$d_{\lambda_K}(\gamma(s),z)\ge u(z)-u(\gamma(s)).$$
This and Lemma \ref{l3.x2} (ii) imply that
\begin{eqnarray*}
\cl_{t-s}(\gamma(s),z)&&\geq d_{\lambda_{K}}(\gamma(s),z)-\lambda_K |t-s|
 \geq u(z)-u(\gamma(s))-\lambda_K |t-s|.
\end{eqnarray*}
Hence we obtain that
$$0\le T^{t}u(x)-T^s u(x)\leq(2+\lambda_K)|t-s|,$$
this yields (\ref{e4.x1.0}).

\smallskip
 \noindent{\it Step 2.} For $0<t<\eta_1(\az,K,r)$ and $x,y\in U_{r}$, it holds that
 \begin{equation}\label{e4.x3}
 |T^tu(x)-T^tu(y)|\le C(K,\az,r)|x-y|.
 \end{equation}
Without loss of generality, assume that $T^tu(x)<T^tu(y)$. It suffices to show
\begin{equation}\label{e4.x4}
T^tu(x)\ge T^tu(y)- C(K,\az,r)|x-y|.
\end{equation}
We consider two cases separately.

\smallskip
\noindent{\it Case  c: $y\notin  B(x,t)$.}  From step 1, it holds that
$$T^tu(y)\leq u(y)+(2+\lambda_K)t\leq u(y)+(2+\lambda_K)|x-y|.$$
On the other hand, it is easy to see that
$$T^tu(x)\ge u(x) \ge u(y)-K|x-y|.$$
Hence we obtain that
$$T^tu(x)\ge T^tu(y)- (2+\lambda_K+K)|x-y|,$$
this implies (\ref{e4.x4}) and hence (\ref{e4.x3}).

\smallskip
\noindent{\it Case  d: $y\in B(x,t)$.}
Set $s=t-|y-x|>0$, and let $\gz(\theta)=x+\theta(y-x)/(t-s)$, $\theta\in[0,\,t-s]$, be the line segment between
$x$ and $y$. Then $\gz([0,t-s])\subset B(x,\,t)$  joins $x$ to $y$.
Since  $t-s=|x-y|$ and $t<\frac{r}2$, we have
$$\cl_{t-s}(x,y)\le \int_0^{t-s}L\lf(\gz(\theta), \gz'(\theta)\r)\,d\theta
\le \sup_{z\in U_{\frac{r}2}, |q|\le 1}L(z,q) |x-y|\equiv C(r)|x-y|.$$
By the definition of $T^su(y)$, there exists $z\in U$ such that
$$
T^{s}u(y)\leq u(z)-\cl_{s}(y,z)+|x-y|.
$$
Since $\cl_t(x,z)\le \cl_{t-s}(x,y)+\cl_{s}(y,z) $,
we have that
$$\cl_{s}(y,z)\ge \cl_t(x,z)-  C(r)|x-y|,$$
and hence
$$
T^{s}u(y)\leq u(z)-\cl_t(x,z)+(1+C(r))|x-y|\le T^tu(x)+ (1+C(r))|x-y|.
$$
By step 1,
we have $$T^tu(y)-(2+\lz_K)|x-y|\le T^su(y).$$
Putting these two inequalities together yields
$$T^tu(x) \ge T^tu(y)-(3+C(r)+\lz_K)|x-y|. $$
This implies (\ref{e4.x4}) and (\ref{e4.x3}).

\smallskip
\noindent{\it Step 3.} For $s,t\in[0,\eta_1(K,\az,r)]$ and $x,y\in U_r$, it follows from both step 1 and
step 2 that
\begin{eqnarray*}
|T^tu(x)-T^su(y)|&&\le |T^tu(x)-T^tu(y)|+ |T^tu(y)-T^su(y)|\\
&&\le C(K,\az,r)(|x-y|+|t-s|).
\end{eqnarray*}
This completes the proof of Lemma \ref{l4.x5}.
\end{proof}

 \section{Proof of Theorem 1.5 }\label{s5}
 This section is devoted to the proof of Theorem 1.5, which asserts
 the equivalence between absolute subminimality, comparison with
 intrinsic cones from above, convexity, and pointwise convexity.
Throughout this section, $H(x,p)$ satisfies  (A1), (A2)$_ \weak $, (A3), and (\ref{u-coercive}).
We will  prove (i)$\Rightarrow$(ii), (ii)$\Rightarrow$(iii), (iii)$\Rightarrow$(iv),
 and (vi)$\Rightarrow$(i).

\subsection{Absolute subminimality implies CICA}

\noindent{\it Proof of i) $\Rightarrow$ ii)}. We argue by contradiction. Suppose that
$u\in C(U)$ is an absolute subminimizer  but $u\notin$CICA$(U)$. Then there exist
an open connected $V\Subset U$, $x_0\in U\setminus V$, and $\lambda>0$ such that
\begin{equation}\label{e5.x0}
u(x) >d_\lambda (x_0, x) \ {\rm{in}}\ V; \ u(x)=d_\lambda (x_0,x) \ {\rm{on}}\ \partial V.
\end{equation}
Since $u$ is absolute subminimizing, we have that
\begin{equation}\label{e5.x00}
\big\|H(\cdot, Du)\big\|_{L^\infty(V)}\le \big\|H(\cdot, Dd_\lambda(x_0,\cdot))\big\|_{L^\infty(V)}
\le \lambda,
\end{equation}
where we have used Lemma \ref{l3.x9} (ii) in the last inequality (see also \cite{cp} Proposition 2.10).
For $x_1\in V$, let $\gamma\in {\rm{Lip}}([0,1], U)$ be a $d_\lambda$-length minimizing curve joining
$x_0$ to $x_1$. Then there exists $\theta_0\in (0,1)$
such that $p_0=\gamma(\theta_0)\in \gamma([0,1])\cap \partial V$, and $\gamma: [\theta_0, 1]
\to U$ is a $d_\lambda$-length minimizing curve joining $p_0$ to $x_1$, with
$\gamma((\theta_0, 1])\subset V$. It follows from Remark \ref{r3.x10} that
$$u(x_1)-u(p_0)\le d_\lambda(p_0, x_1).$$
Since $u=d_\lambda(x_0,\cdot)$ on $\partial V$, we have
$u(p_0)=d_\lambda(x_0, p_0)$. Hence
we have
$$u(x_1)\le d_\lambda(x_0,p_0)+d_\lambda(p_0,x_1).$$
On the other hand, from the choice of $p_0$ we  have
$$d_\lambda(x_0,x_1)=d_\lambda(x_0, p_0)+d_\lambda (p_0, x_1).$$
Therefore we would have
$$u(x_1)\le d_\lambda(x_0, x_1).$$
This contradicts to (\ref{e5.x0}). \qed

\subsection{CICA implies convexity criteria} %Proof of (ii)$\Rightarrow$(iii)}

\begin{lem}\label{l5.x1}
Suppose that $u\in \cica(U)$.
Then $u\in C(U)\cap\lip_\loc(U).$
\end{lem}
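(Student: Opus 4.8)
The plan is to promote the bare upper semicontinuity built into $u\in\cica(U)$ to local Lipschitz continuity; once $u\in\lip_\loc(U)$, the inclusion $u\in C(U)$ is automatic. The only structural input needed is Lemma~\ref{l2.w16}: for $x_0\in U$ and $\rho<\dist(x_0,\partial U)$ one has $r_\lz|x_0-y|\le d_\lz(x_0,y)\le R_\lz|x_0-y|$ on $B(x_0,\rho)$, and $r_\lz\to\fz$ as $\lz\to\fz$, so a sufficiently large $\lz$ makes the intrinsic cone $d_\lz(x_0,\cdot)$ as steep as we please. Recall that throughout Theorem~\ref{t1.x4} one has $u\in L^\fz(U)$; let $\az\ge\osc_U u$.

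First I would fix $V\Subset U$, pick $\rho\in(0,\dist(V,\partial U))$, and choose $\Lambda>0$ with $r_\Lambda\rho>\az$. For an arbitrary apex $x_0\in V$, apply the comparison property with intrinsic cones from above to the cone $d_\Lambda(x_0,\cdot)+u(x_0)$ on $V':=B(x_0,\rho)\Subset U$: on $\partial V'$ one has $d_\Lambda(x_0,\cdot)\ge r_\Lambda\rho>\az\ge u(\cdot)-u(x_0)$, so $u-d_\Lambda(x_0,\cdot)-u(x_0)$ is strictly negative on $\partial V'$ while vanishing at $x_0$; since $\partial(V'\setminus\{x_0\})=\partial V'\cup\{x_0\}$, its maximum over $\overline{V'\setminus\{x_0\}}$ therefore equals $0$. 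This gives the one-sided intrinsic estimate
\begin{equation*}
u(y)-u(x_0)\le d_\Lambda(x_0,y)\le R_\Lambda\,|y-x_0|,\qquad y\in B(x_0,\rho).
\end{equation*}
Applying this with the apex at each point, for $x,y\in V$ with $|x-y|<\rho$ one obtains (from apex $x$, then from apex $y$) that $|u(x)-u(y)|\le R_\Lambda|x-y|$, while for $|x-y|\ge\rho$ one bounds $|u(x)-u(y)|\le\az\le(\az/\rho)|x-y|$. Hence $u$ is Lipschitz on $V$ with constant $\max\{R_\Lambda,\az/\rho\}$, and since $V\Subset U$ was arbitrary, $u\in\lip_\loc(U)\subset C(U)$. (Equivalently, once $u(y)-u(x)\le d_\Lambda(x,y)$ holds throughout a fixed ball, Lemma~\ref{l3.x9}(ii) yields $u\in\lip$ there directly.)

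I expect the only delicate point to be the existence of a single $\Lambda$ that serves all apexes $x_0\in V$ simultaneously: the comparison property with cones \emph{from above} controls $u$ near a given point only from above, so it is precisely the (granted) two-sided bound $u\in L^\fz(U)$, combined with $r_\lz\to\fz$, that keeps the required slope $\Lambda$ uniform; the remainder is the elementary computation above together with Lemma~\ref{l2.w16}.
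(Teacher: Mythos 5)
Your argument is correct in the context where the lemma is used, but it establishes a formally weaker statement than the paper's, and the route is genuinely different. You prove local Lipschitz continuity directly and get $u\in C(U)$ as a free consequence, whereas the paper's proof goes the other way: it first shows $u\in\lsc(U)$ (hence, with the given $\usc$, $u\in C(U)$) by a contradiction argument -- pick $l\in\rr$ and $x$ with $u(x)>l$, suppose some nearby $y_*$ has $u(y_*)\le l$, compare $u$ against the steep cone $l+d_\lambda(y_*,\cdot)$ on the large ball $B(x,2\delta)$ and derive $u(x)<u(x)$ -- and only then passes to the Lipschitz estimate, using the local oscillation bound that continuity on a compact neighbourhood supplies. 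The reason the paper bothers with this two-step route is exactly the point you flag as "the only delicate point": your chain $u(y)-u(x_0)\le\az\le r_\Lambda\rho<d_\Lambda(x_0,y)$ needs $\osc_Uu\le\az<\fz$, i.e.\ a two-sided $L^\fz$ bound, and that is not part of the definition of $\cica(U)$ (which only asks $u\in\usc(U)$, hence $u$ is locally bounded \emph{above} but a priori not below). The paper's $\lsc$ step uses only $\max_{\overline{B(x,2\delta)}}u<\fz$, which $\usc$ alone provides, so Lemma~\ref{l5.x1} as stated does not assume boundedness. Since you explicitly invoke the hypothesis $u\in L^\fz(U)$ of Theorem~\ref{t1.x4} -- the only place the lemma is cited -- your shortcut is legitimate for the theorem's purposes, and the resulting Lipschitz argument (uniform cone slope $\Lambda$ across a compact set, applied at both endpoints) matches the paper's second step. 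In short: a cleaner, more economical proof of a narrower lemma, sufficient here; the paper's version is more general because it manufactures the needed local lower bound rather than assuming it.
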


\begin{proof}
Since $u\in\usc(U)$,  to see $u\in C(U)$  it suffices to show $u\in\lsc(U)$, that is,
$ O_l:=\{x\in U : u(x)>l\}$ is open for each $l\in \rr.$
For $l\in\rr$ and $x\in O_l$, assume that $u$ is not constant on some neighborhood of $x$.
Set $\delta:=\frac13\dist(x,\partial U)$.  There exists a sufficiently large $\lambda>0$ such that
\begin{eqnarray}\label{e5.x1}
0<\max_{\overline{B(x,2\delta)}}u-l\le r_\lz \dz. %\leq\min_{|y-x|\geq \delta}d_{\lambda}(y,x).
\end{eqnarray}
Choose a sufficiently small $\eta\in(0,\dz)$ so that
\begin{eqnarray}\label{e5.x2}
\max_{y\in \overline {B(x,\eta)}}d_{\lambda}(y,x)<u(x)-l.
\end{eqnarray}
We want to show that  $u(y)>l$ whenever $y\in B(x,\eta)$. For, otherwise,
there exists $y_*\in B(x,\eta)$  and $u(y_*)\leq l.$
If $z\in \partial B(x,2\delta),$  then it holds that
$$d_{\lambda}(y_*,z)\ge r_\lz |y_*-z|\ge r_\lz(2\dz-\eta)>r_\lz\delta\ge \max_{\overline{B(x,2\delta)}}u-l.$$
This implies that
$$u(z)\le l+d_\lambda (y_*,z) \ {\rm{on}}\ \partial \big(B(x,2\delta)\setminus\{y_*\}\big).
$$
Since $u\in {\rm{CICA}}(U)$, it follows that
$$u(z)\le l+d_\lambda(y_*,z), \ \forall \ z\in B(x,2\delta),$$
and hence
$$u(x)\le l+d_\lambda(y_*,x),$$
this contradicts to \eqref{e5.x2}. This implies that $O_l$ is open for all $l\in\rr$
and hence $u\in\lsc(U)$.

To prove $u\in\lip_\loc(U)$, let $r>0$, by the continuity of $u$ on $U$, we have
that $\osc_{U_r}u<\fz$.
Choose $\lz>0$ such that $\osc_{U_r}u\le \frac{r_\lz r}{2}$.
For any $x\in U_{2r}$, we have that $B(x,\frac{r}2)\subset U_r$, and
$$u(y)-u(x)\le\osc_{U_r}u\le \frac{r_\lz r}2\le d_\lz(x,y), \ \forall\ y\in\partial B(x,\frac{r}2).$$
Since $u\in\cica(U)$, this yields
$$u(y)-u(x)\leq d_{\lambda}(x,y), \ \forall\ y\in B(x,\frac{r}2).$$
Hence $u\in \lip(U_r)$.
\end{proof}

Now we are ready to prove (ii)$\Rightarrow$(iii). The proof relies heavily
on Theorem \ref{t3.x6}. In contrast with \cite{acjs}, it is unknown that  for $V\Subset U$,
$u\in\cica(U)$ implies $T^tu\in \cica (V)$
for general Hamiltonians $H(x,p)$. The proof involves a subtle application of $u\in\cica(U)$
and some deep analysis of the structure of Hamilton-Jacobi flows.

\smallskip
\noindent {\it Proof of (ii)$\Rightarrow$(iii).}
Let $u\in \cica(U)$. By Lemma \ref{l5.x1}, we may assume that  $u\in\lip_{\loc}(U)$ and $\osc_Uu<\fz$.
For $r>0$, set $\az=\osc_Uu$  and $K=\lip(u,U_{\frac{r}2})$.
By Lemma \ref{l4.x5}, there exists $\eta_1(\az,K,r)>0$
such that the map $(s,x)\mapsto T^s  u(x) $ is Lipschitz on $[0,\eta_1(\az,K, r)]\times U_{ r}$.
Denote by $C(K,\az,r)>0$ the corresponding Lipschitz constant.
Let $a_C\ge 1$ be such that $M(a_C)\ge R_C$.
Set $$\eta(K,\az,r):=\frac1{8}\min\Big\{\eta_1(K,\az,r),\eta_0(\az,\frac{r}4),\frac{r}{4a_C}\Big\}.$$
Now we need

\smallskip
\noindent{\it Claim 1}. {\it For $ s \in[0,\eta(K,\az,r))$, $  h\in(0,\eta(K,\az,r))$,  and  $x\in U_{2r} $,
it holds that
\begin{eqnarray}\label{e5.x3}
S^{+}(T^su)(x)\leq\frac{T^{s+h}  u(x)-T^su(x)}h.
\end{eqnarray}}

Assume Claim 1 for the moment. For  $x\in U_{2r}$, from Lemma \ref{l4.x5}
 the map  $t\mapsto T^t u(x)$ is Lispchitz on $[0,\eta_1(\az,K,r)]$.
Then we have (see also \cite[p.426]{acjs}) that for all $ s,t \in[0,\eta(K,\az,r))$ and $s<t$,
$$
\frac{d}{d\tau}\Big|_{\tau=s}\lf[\frac{T^tu(x)-T^\tau u(x) }{t-\tau}\r]=\frac1{t-s}\lf[-S^{+}(T^su)(x)+ \frac{T^tu(x)-T^su(x) }{t-s}\r]\ge 0$$
whenever the map $\tau\mapsto T^\tau u(x)$ is differentiable at $s$.
This shows that the map  $t\mapsto T^t u(x)$
is convex on $[0,  \eta(K,\az,r))$ for all $x\in U_{2r}$. Hence (iii) holds.

\begin{proof}[Proof of Claim 1.]
 We first observe that for every $x\in U_{2r}$,
 $$\bigcup_{\{w\in B(x,\frac{r}2)\}}\bigcup_{\{\lz<C\}} E^{ <2\eta}_{\lz} (w)  \subset B(x,r).$$
  In fact,  if $y\in E^t_\lz(w)$ for some $w\in B(x,\frac{r}2)$, $t<2\eta$ and  $\lz<C$,
  then $$d_\lz(w,y)=\cl_t(w,y)+ \lz t>\cl_t(w,y).$$
 It follows that $d_U(w,y)<2a_C\eta$. For, otherwise, we would have
  $$\cl_t(w,y)\ge M\lf(\frac{d_U(w,y)}t\r)d_U(w,y)\ge M\lf(\frac{2a_C\eta}t\r)d_U(w,y)
  \ge R_Cd_U(w,y)\ge d_\lambda (w,y).$$
 Therefore, by $a_C\eta<\frac{r}4$ and $|w-x|<\frac{r}2$, we obtain
  $$E^t_\lz(w) \subset \overline U\setminus \Big\{y\in\overline U: d_U(w,y)\ge 2a_C\eta\Big\}
  = B(w,2a_C\eta)\subset B(x,r).$$
Given $s\in[0,\eta)$, $h\in(0,\eta)$, and $x\in U_{2r},$   set
\begin{equation}\label{e5.lambda}
\lambda:=\frac{T^{s+h} u(x)-T^su(x)}{h}\le C.
\end{equation}
We may assume $\lz>0$. For, otherwise, $S^+(T^su)(x)=0\le \lz$.
Choose a sufficiently small $\sz_0\in(0,\eta-s)$ so that for all $\sz<\sz_0$, it holds
\begin{equation}\label{e5.yy00}
\min\lf\{M\lf( \frac{r}{4\sz} \r)\frac{r}4,\,M\lf( \frac {\lz h}{\sz R_\lz}\r)\frac {\lz h}{R_\lz}\r\}
\ge \sup_{z\in B(x,\frac{r}2)}\cl_{s }(x,z)+\lambda.
\end{equation}
For $\sz<\sz_0$ and $s+\sz<\eta<\eta_0(\az,\frac{r}4)$, it follows from Lemma \ref{l4.x2} that
$$T^{s+\sz}u(x)=\sup_{z\in B(x,\frac{r}4)}\Big\{u(z)-\cl_{s+\sz}(x,z)\Big\}.$$
(\ref{e5.x3}) follows, if we can show that for an arbitrary $\ez>0$,
\begin{eqnarray}\label{e5.yy3}
u(z)-\cl_{s+\sz}(x,z)
 \le    T^su(x)+   \lz\sz+ \lz\ez \quad \mbox{for all} \  z\in B(x,\frac{r}4).
\end{eqnarray}
Indeed,  (\ref{e5.yy3}) yields that
$$T^{s+\sz}u(x)\le T^su(x)+\lz (\sz+\ez).$$
Sending $\ez\to0$, this implies that
$T^{s+\sz}u(x)\le T^su(x)+\lz  \sz $ and  hence
$$ S^+(T^su)(x)\le\limsup_{\sz\to0}\frac{T^{s+\sz}u(x)-T^su(x)}{\sz}\le\lz.$$

To show \eqref{e5.yy3}, recall from the definition of $\lz$ given by (\ref{e5.lambda})
that for all $z,w\in U$,
$$
u(z)\le T^su(x)+\lz h+ \cl_{s+h}(x,z)
\le T^su(x)+\lz h+ \cl_{s}(x,w)+\cl_h(w,z).
$$
If $w\in B(x,\frac{r}2)$ and
$z\in E^h_\lz(w)\subset B(x,r)$, then we have
\begin{equation}\label{e5.yy1}
u(z)\le T^su(x)+  \cl_{s}(x,w)+d_\lz(w,z).
\end{equation}
It is clear that (\ref{e5.yy1}) also holds when $z=w$. In particular, (\ref{e5.yy1}) holds
on $\partial \big(E^{\le h}_\lz(w)\setminus\{w\}\big)$.  Since $u\in\cica(U)$,
we conclude that \eqref{e5.yy1} holds for all $z\in E^{\le h}_\lz(w)$.

For $\sz<\sz_0$ and $z\in B(x,\frac{r}4)$,
there is $\gz\in\mathcal C\big(0,s+\sz; x, z; \overline U\big)$ such that
$$\cl_{s+\sz}(x,z)\ge \int_0^{s+\sz}L(\gz(\theta),\gz'(\theta))\;d\theta-\lz \ez,$$
and hence
\begin{equation}\label{e5.yy2}
\cl_{s+\sz}(x,z)\ge\cl_s(x,\gz(s))+\cl_{\sz}(\gz(s),z)-\lz\ez.
\end{equation}
It turns out that $|z-\gz(s)|<\frac{r}4$. For, otherwise, Lemma \ref{l3.x3} and (\ref{e5.yy00})
 imply that
\begin{eqnarray*}
\cl_{\sz}(\gz(s),z)&&\ge M\lf(\frac{ d_U(z,\gz(s))}\sz\r) d_U(z,\gz(s))\ge M\lf( \frac{r}{4\sz} \r)\frac{r}4\\
&&\ge \cl_{s }(x,z)+\lambda\ge \cl_{s+\sz}(x,z)+\lambda.
\end{eqnarray*}
This contradicts to (\ref{e5.yy2}).  Thus $\gz(s)\in B(x,\frac{r}2)$.
Similarly, we also have
$z\in B_{d_\lz}(\gz(s),\lz h)$. For, otherwise, we have
$$d_U(z,\gz(s))\ge \frac{d_\lz(\gz(s),z)}{R_\lz}\ge \frac{\lz h} {R_\lz},
$$
and hence
\begin{eqnarray*}
\cl_{\sz}(\gz(s),z)&&\ge M\big(\frac{ d_U(z,\gz(s))}\sz\big) d_U(z,\gz(s))
\ge M\big( \frac {\lz h}{\sz R_\lz}\big)\frac {\lz h}{R_\lz}\\
&&\ge \cl_{s }(x,z)+\lambda\ge \cl_{s+\sz}(x,z)+\lambda.
\end{eqnarray*}
This also contradicts to (\ref{e5.yy2}).
By Theorem \ref{t3.x6}, we know that $ B_{d_\lz}(\gz(s),\lz h)\subset  E^{<h}_\lz(\gz(s))$.
Therefore we have $z\in E^{<h}_\lz(\gz(s))$. Take $w=\gz(s)$, \eqref{e5.yy1} holds on $E^{\le h}_\lz(\gz(s))$.
Thus we arrive at
$$
u(z)\le T^su(x)+  \cl_{s}(x,\gz(s))+d_\lz(\gz(s),z).$$
This, together with \eqref{e5.yy2} and $\cl_\sz(\gz(s),z)\geq d_\lz(\gz(s),z)-\lz\sz$,
yields that
\begin{eqnarray*}
u(z)-\cl_{s+\sz}(x,z)&&\le T^s u(x) -\cl_\sz(\gz(s),z)+\lz\ez+d_\lz(\gz(s),z)
\leq T^su(x)+   \lz\sz+ \lz\ez.
\end{eqnarray*}
Hence \eqref{e5.yy3} follows.
\end{proof}

\subsection{Convexity criteria implies pointwise convexity criteria}% Proof of (iii)$\Rightarrow$(iv)}

{\it Proof of (iii)$\Rightarrow$(iv).}
Suppose that $u\in C(U)$ is bounded and satisfies the convexity criteria.
To show $u$ also satisfies the pointwise convexity criteria, it suffices to prove
that $S^+u\in {\rm{USC}}(U)$ and $u\in\lip_\loc(U)$.

Since for any $r>0$, there exists $0<\delta=\delta(r)<\frac{r}2$ such that $t\mapsto T^tu(x)$ is convex
on $[0,\delta]$ for any $x\in U_r$, it follows that for any $x\in U_r$,
$\displaystyle S^+_tu(x):=\frac{T^tu(x)-u(x)}{t}: (0, \delta]\to \rr_+$ is a monotone nondecreasing
function. Thus
$$
S^{+}u(x)=\lim_{t\rightarrow 0^+} S^+_tu(x)$$
exists for all $x\in U_r$ and is a upper semicontinuous function in $U_r$. Since $r>0$ is arbitrary,
we conclude that $S^+u\in {\rm{USC}}(U)$.

To see $u\in \textrm{Lip}(U_r)$, first observe that
\begin{eqnarray}\label{e5.x5}
\lambda:=\sup_{x\in U_r} \sup_{0<t\leq  \delta}\frac{T^t u(x)-u(x)}{t}
=\sup_{x\in U_r}\frac{T^{\delta} u(x)-u(x)}{ \delta}\le \frac{2}{\delta}\|u\|_{L^\infty(U_{\frac{r}2})}.
\end{eqnarray}
Hence
\begin{equation}\label{e5.x5.1}
u(y)-u(x)\le \lambda t+\cl_t (x,y), \ \forall x, y\in U_r, \ 0<t\le \delta.
\end{equation}
It follows from (\ref{e5.x5.1}) and  Theorem \ref{t3.x6} that
\begin{equation}\label{e5.x5.2}
u(y)-u(x)\le d_\lambda(x,y), \ \forall\ x\in U_{2r} \ {\rm{and}} \ \forall\ y\in E_\lambda^{<\delta}(x).
\end{equation}
Since $B_{d_\lambda}(x,\lambda\delta)\subset E_\lambda^{<\delta}(x)$, (\ref{e5.x5.2}) yields
\begin{equation}\label{e5.x5.21}
u(y)-u(x)\le d_\lambda(x,y), \ \forall\ x\in U_{2r} \ {\rm{and}} \ \forall\ y\in B_{d_\lambda}(x,\lambda \delta).
\end{equation}
This and Lemma \ref{l3.x9} (ii) imply that $u\in {\rm{Lip}}(U_{2r})$, and
$$\big\|H(\cdot, Du)\big\|_{L^\infty(U_{2r})}\le \lambda.$$
Hence, by (A3) we conclude that there exists $C_r>0$ such that
$$\big\|Du\big\|_{L^\infty(U_{2r})}\le C_r.$$
This completes the proof. \qed

\subsection{Pointwise convexity criteria implies absolute subminimality}%Proof of (iv)$\Rightarrow$(i)}

To prove (iv)$\Rightarrow$(i), we need the following Lemmas.

\begin{lem}\label{l5.x3} For $\az>0$, $K>0$ and $r>0$, let $\eta_0(\alpha, \frac{r}2)>0$ and $a_K$ be given
by Lemma \ref{l4.x2}.
Assume that $ \lip(u,U_{r/2})\le K $  and $\osc_Uu\le \az$.
If $x\in U_{2r}$, $s<\frac{r}2$,  and $\lz\ge\| H(\cdot,Du)\|_{L^{\infty}(B(x,s))} $ and $\lz>0$,
then for   $0<t<\min\big\{\eta_0(\az,\frac{r}2),\frac{r}{a_K}, \frac{sr_\lz}{2R_\lz a_K}\big\}$  and $y\in B(x,\frac{s}4)$,
there holds
\begin{equation}\label{e5.x3.0}
 T^t u(y)-u(y) \leq \lz t.
\end{equation}
\begin{proof} By Lemma \ref{l4.x2},  when $0<t<\min\big\{\eta_0(\az,\frac{r}2),\frac{r}{a_K}\big\}$ and $x\in U_r$, we have
$$ T^t u(y)-u(y) = \sup_{z\in   B(y,a_Kt)}\Big\{u(z)-u(y)-\cl_t(y,z)\Big\}.$$
For each $s<\frac{r}2$ and $x\in U_r$,  if $t<\frac{sr_\lz}{2R_\lz  a_K}$ and $y\in B(x,\frac{s}2)$,
then we have
$$B(y,a_Kt)\subset B_{d_\lz}(y,a_KtR_\lz)\subset B\big(y,\frac{a_KtR_\lz}{r_\lz}\big)\subset B\big(y,\frac{s}2\big)\subset B(x,s).$$
Notice that for every   $z\in B(y, a_K t)$,   there is a $d_\lz$-length
minimizing geodesic curve $\gamma$ joining $y$ to $z$, which is contained in
$B_{d_\lz}(x,a_KtR_\lz)\subset B  (x,s)$. Since $\| H(\cdot,Du)\|_{L^{\infty}(B  (x,s))}\le \lambda$ and $\lz>0$,
it follows from Lemma \ref{l3.x9} and Remark \ref{r3.x10}
that
$$u(z)-u(y)\le d_{\lz }(y,z),$$
and hence
\begin{eqnarray*}
u(z)-u(y)-\cl_t(y,z)&&\le u(z)-u(y)-d_{\lz }(y,z)+ {\lz }  t  \le \lz t .
\end{eqnarray*}
This implies (\ref{e5.x3.0}).
\end{proof}
\end{lem}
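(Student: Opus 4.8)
The plan is to use the localization of the Hamilton--Jacobi flow from Lemma \ref{l4.x2} to cut the supremum defining $T^tu(y)$ down to a small Euclidean ball on which the slope bound $\lz\ge\|H(\cdot,Du)\|_{L^\fz(B(x,s))}$ is in force, and then to combine the intrinsic comparison inequality $u(z)-u(y)\le d_\lz(y,z)$ with the elementary lower bound $\cl_t(y,z)\ge d_\lz(y,z)-\lz t$ coming from Lemma \ref{l3.x2}(ii).

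First I would record that, since $y\in B(x,\frac s4)$ with $x\in U_{2r}$ and $s<\frac r2$, the base point $y$ sits comfortably inside $U_r$ (and $B(x,s)\subset U_{r/2}$), so the hypotheses $\osc_Uu\le\az$ and $\lip(u,U_{r/2})\le K$ allow Lemma \ref{l4.x2}(ii) to be applied at $y$: for $0<t<\min\{\eta_0(\az,\frac r2),\frac r{a_K}\}$,
$$T^tu(y)-u(y)=\sup_{z\in B(y,a_Kt)}\big\{u(z)-u(y)-\cl_t(y,z)\big\}.$$

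Next I would control the geometry using the third threshold $t<\frac{sr_\lz}{2R_\lz a_K}$. By Lemma \ref{l2.w16} and the fact that $d_U=|\cdot|$ on small balls, $B(y,a_Kt)\subset B_{d_\lz}(y,a_KtR_\lz)\subset B\big(y,\frac{a_KtR_\lz}{r_\lz}\big)\subset B\big(y,\frac s2\big)\subset B(x,s)$, and likewise any $d_\lz$-length minimizing geodesic joining $y$ to a point $z\in B(y,a_Kt)$ stays within $d_\lz$-distance $a_KtR_\lz$ of $y$, hence also in $B(x,s)$. Since $\|H(\cdot,Du)\|_{L^\fz(B(x,s))}\le\lz$ and $\lz>0$, Lemma \ref{l3.x9} in its localized form (Remark \ref{r3.x10}), applied with $V=B(x,s)$, then gives $u(z)-u(y)\le d_\lz(y,z)$ for every $z\in B(y,a_Kt)$. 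Combining this with Lemma \ref{l3.x2}(ii) yields
$$u(z)-u(y)-\cl_t(y,z)\le d_\lz(y,z)-\big(d_\lz(y,z)-\lz t\big)=\lz t,$$
and taking the supremum over $z\in B(y,a_Kt)$ gives \eqref{e5.x3.0}.

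The proof itself is short; the only point requiring care is the simultaneous bookkeeping of radii — one must ensure that the localization radius $a_Kt$ furnished by Lemma \ref{l4.x2} is small enough that both the competitor ball $B(y,a_Kt)$ and the connecting $d_\lz$-geodesics needed for Remark \ref{r3.x10} remain inside the region $B(x,s)$ where the bound on $H(\cdot,Du)$ is assumed. This is precisely what the factor $\frac{r_\lz}{2R_\lz a_K}$ in the upper bound for $t$ encodes, as it compensates for the two-sided distortion between $d_\lz$ and the Euclidean metric in Lemma \ref{l2.w16}.
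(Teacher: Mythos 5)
Your proposal is correct and follows essentially the same route as the paper's proof: localize $T^tu(y)$ to $B(y,a_Kt)$ via Lemma \ref{l4.x2}(ii), nest this ball inside $B(x,s)$ using the threshold $t<\frac{sr_\lambda}{2R_\lambda a_K}$ and Lemma \ref{l2.w16}, apply Remark \ref{r3.x10} to get $u(z)-u(y)\le d_\lambda(y,z)$, and finish with Lemma \ref{l3.x2}(ii). The only deviation is cosmetic — you correctly place the intermediate $d_\lambda$-ball around $y$ rather than $x$, which reads as a small slip in the paper's text but does not change the argument.
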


\begin{lem}\label{l5.x4} For $ u\in\lip_\loc(U)$, with $\osc_Uu<\fz$, and any open set $V\Subset U$, we have
\begin{equation}\label{e5.x40}
\sup_{x\in V}S^{+}u(x)=\big\|H(x,Du)\big\|_{L^{\infty}(V)}.
\end{equation}
\begin{proof}
For every $x\in V,$   Lemma \ref{l5.x3}  yields
  $$S^{+}u(x)=\limsup_{t\to0}\frac{T^t u(x)-u(x)}{t}\leq \big\|H(\cdot,Du )\big\|_{L^{\infty}(V)}.$$
For the opposite direction of the inequality, it suffices to prove that
$H(x,Du(x) )\le S^+u(x)$ whenever $u$ is differentiable at $x\in V$.
Suppose that $u$ is differentiable at $x\in V$. Then for every $p\in \rr^n$ and  sufficiently small $t>0$,
we have
\begin{eqnarray*}
\frac{T^t u(x)-u(x)}{t}&&=\sup_{y\in U}\left[\frac{u(y)-u(x)}{t}-\frac{1}{t}\cl_t(x,y)\right]
\geq\frac{u(x+tp)-u(x)}{t}-\frac{1}{t}\cl_t(x,x+tp).
\end{eqnarray*}
We claim that
\begin{equation}\label{e5.x41}
\frac{1}{t}\cl_t(x,x+tp)\leq L\left(x,p\right)+\ez_p(t),
\end{equation}
where $\;\epsilon_p(t)\rightarrow0$ as $\;t\rightarrow0.$
In fact, let $\gz(\theta)=x+\theta p$, $\theta\in[0,\,t]$. Then by the upper semicontinuity of $L$ we have
\begin{eqnarray*}
\frac{1}{t}\cl_t(x,x+tp) \leq \frac{1}{t}\int_0^t L\left(x+\theta p,p\right)d\theta\le \sup_{z\in B(x,\,t|p|)}L(z,p)\le L(x,p)+\epsilon_p(t),
\end{eqnarray*}
where$\;\epsilon_p(t)\rightarrow0$  as $\;t\rightarrow0.$
Applying (\ref{e5.x41}),  we arrive at
\begin{eqnarray*}
\frac{T^t u(x)-u(x)}{t}\geq\frac{u(x+tp)-u(x)}{t}- L(x,p)-\epsilon_p(t).
\end{eqnarray*}
Sending $t\rightarrow0$, this yields that
$$S^{+}u(x)\geq Du(x)\cdot p-L(x,p).$$
Taking the supremum over all $p\in \rr^n,$ we conclude
that $$S^{+}u(x)\geq \sup_{p\in\rn}\Big\{Du(x)\cdot p-L(x,p)\Big\}=H(x,Du(x)).$$
This completes the proof.
\end{proof}
\end{lem}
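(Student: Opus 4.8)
The plan is to prove the two inequalities in \eqref{e5.x40} separately. The bound $\sup_{x\in V}S^+u(x)\le\|H(\cdot,Du)\|_{L^\fz(V)}$ is essentially immediate from the slope estimate already established in Lemma \ref{l5.x3}. For a fixed $x\in V$ and any $\lz>\|H(\cdot,Du)\|_{L^\fz(V)}$, I would choose $r,s>0$ small enough that $B(x,s)\subset V$ and the hypotheses of Lemma \ref{l5.x3} are satisfied (legitimate since $x\in V\Subset U$, $u\in\lip_\loc(U)$, and $\osc_Uu<\fz$). That lemma then gives $T^tu(x)-u(x)\le\lz t$ for all sufficiently small $t>0$, hence $S^+u(x)\le\lz$ after dividing by $t$ and letting $t\downarrow0$; minimizing over admissible $\lz$ and then maximizing over $x\in V$ yields the inequality. (In the degenerate case $\|H(\cdot,Du)\|_{L^\fz(V)}=0$ one takes $\lz=\ez$ and sends $\ez\to0$, which is harmless because $S^+u\ge0$.)

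For the reverse inequality I would argue at points of differentiability: since $u\in\lip_\loc(U)$, $u$ is differentiable a.e.\ in $V$, and so it suffices to show $H(x,Du(x))\le S^+u(x)$ whenever $u$ is differentiable at $x\in V$; indeed, $\|H(\cdot,Du)\|_{L^\fz(V)}$ is the essential supremum over $V$ of $x\mapsto H(x,Du(x))$, which is then bounded above by $\sup_{x\in V}S^+u(x)$. Fixing such an $x$ and an arbitrary $p\in\rn$, I would test the definition of $T^tu(x)$ with the competitor $y=x+tp$, admissible for small $t$ since $x\in V\Subset U$, to obtain
$$\frac{T^tu(x)-u(x)}{t}\ge\frac{u(x+tp)-u(x)}{t}-\frac1t\cl_t(x,x+tp),$$
and then bound the last term from above by the $L$-action along the straight segment $\theta\mapsto x+\theta p$, $\theta\in[0,t]$:
$$\frac1t\cl_t(x,x+tp)\le\frac1t\int_0^tL(x+\theta p,p)\,d\theta\le\sup_{z\in B(x,t|p|)}L(z,p).$$
Letting $t\downarrow0$ and using that $L$ is upper semicontinuous on $\overline U\times\rn$ (a consequence of (A1) and (A2)$_\weak$), the right-hand side tends to at most $L(x,p)$, so $S^+u(x)\ge Du(x)\cdot p-L(x,p)$. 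Taking the supremum over $p\in\rn$ and invoking the Legendre duality $\sup_{p\in\rn}\{Du(x)\cdot p-L(x,p)\}=H(x,Du(x))$ (valid since $H(x,\cdot)$ is convex and lower semicontinuous) finishes the argument.

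I expect the only delicate point to be the passage to the limit in the upper estimate for $\frac1t\cl_t(x,x+tp)$: it relies both on the joint upper semicontinuity of $L$ and on having the straight segment available inside $U$, and both hold here precisely because $x\in V\Subset U$. Everything else amounts to routine unwinding of the definitions of $T^tu$ and $\cl_t$ together with the convex-duality facts recorded in Section 2.
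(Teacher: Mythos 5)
Your argument is correct and follows essentially the same route as the paper's proof: the upper bound on $S^+u$ comes from Lemma~\ref{l5.x3} exactly as you say, and the lower bound is obtained by testing $T^t u(x)$ against $y=x+tp$, bounding $\frac1t\cl_t(x,x+tp)$ along the straight segment via upper semicontinuity of $L$, and concluding with Legendre duality. The only difference is that you spell out the $\lambda$-approximation in the first inequality more explicitly than the paper does, which is a harmless elaboration.
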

The following lemma on the increasing slope estimate plays a key role in the proof of
``iv) $\Rightarrow$ i)".  It is an extension of
\cite[Lemma 4.6]{acjs} where $H=H(p)$ is assumed.
Since the approach of \cite[Lemma 4.6]{acjs} doesn't seem to be applicable to the case that $H(x,p)$
has $x$-dependence, we provide a different argument.

\begin{lem}\label{l5.x5} Assume that $u\in\lip_\loc(U)$, with $\osc_Uu<\fz$, satisfies the pointwise convexity criteria.
If $x,y\in U$ and $0<t<\delta(x)$ satisfy
\begin{eqnarray}\label{e5.x6}
T^t u(x)=\lim_{y_i\to y}\Big\{u(y_i)-\cl_t(x,y_i)\Big\},
\end{eqnarray}
then
\begin{eqnarray}\label{e5.x7}
\frac{T^t u(x)-u(x)}{t}\leq S^{+}u(y).
\end{eqnarray}
\begin{proof}
For $\ez>0$ and $0<s<t$, there exist  $\gamma_i\in \mathcal C\big(0,t; x, y_i;
\overline U\big)$ such that
$$\cl_t(x,y_i)\ge\int_0^tL(\gz_i(\theta),\gz_i'(\theta))\,d\theta-s\ez.$$
This implies that
$$\cl_t(x,y_i)\ge \cl_{t-s}(x,\gz_i(t-s))+\cl_s(\gz_i(t-s),y_i)-s\ez.$$
Hence it holds that
\begin{eqnarray}\label{e5.x8} \cl_s(\gz_i(t-s),y_i)\le   \cl_t(x,y_i)-\cl_{t-s}(x,\gz_i(t-s))+s\ez.\end{eqnarray}
From (\ref{e5.x8}) and \eqref{e5.x6},  we obtain
\begin{eqnarray}
 &&T^s u(\gamma_i(t-s))-u(\gamma_i(t-s))\nonumber\\
  &&\quad\geq  u(y_i)-\cl_s(\gamma_i(t-s),y_i)-u(\gamma_i(t-s)) \nonumber\\
 &&\quad\geq  \lf[u(y_i)-\cl_t(x,y_i)\right]-\left[u(\gamma_i(t-s))-\cl_{t-s}(x,\gamma_i(t-s))\right]-s\ez \nonumber\\
&&\quad\geq T^t u(x)-T^{t-s}u(x)- 2s\ez.\nonumber
\end{eqnarray}
Since $u$ satisfies the pointwise convexity criteria, we have
$$
T^{t-s}u(x)\leq\frac{s}{t}T^0 u(x)+\frac{t-s}{t}T^t u(x)=\frac{s}{t}  u(x)+\frac{t-s}{t}T^t u(x).
$$
Hence we obtain
\begin{eqnarray}\label{e5.x9}
\frac{T^s u(\gamma_i(t-s))-u(\gamma_i(t-s))}{s}
&&\ge\frac{T^t u(x)-u(x)}{t}-2\ez.
\end{eqnarray}
By the upper semicontinuity of $S^+u$,
there exists $r=r_\epsilon\in(0,\dist(y,\partial U))$ such that
\begin{eqnarray}\label{e5.x10}\sup_{z\in B(y,r)}S^{+}u(z)\le S^+u(y)+\ez.
\end{eqnarray}
Let $s_0>0$ be such that
\begin{equation}\label{e5.x10.1}
M\big(\frac{r}{8s_0}\big)\frac{r}8>
\sup_{z\in B(y,\frac{r}8)}\cl_t(x, z)+s\ez.
\end{equation}
We claim that for every $s\in(0,s_0)$ and sufficiently large $i$,
$\gamma_i(t-s)\in B(y,\frac{r}4)$. For, otherwise,  there exists
a sufficiently large $i$ such that $|y_i-y|\le \frac{r}8$, but $\gamma_i(t-s)\notin B(y,\frac{r}4)$.
In particular, $\gamma_i(t-s)\notin B(y_i,\frac{r}8)$.  By Lemma \ref{l3.x3} and (\ref{e5.x10.1}),
this yields
 $$ \cl_s(\gz_i(t-s),y_i)\ge M\lf(\frac{|y_i-\gz_i(t-s)|}s\r) |y_i-\gz_i(t-s)| \ge M(\frac{r}{8s})\frac{r}8
 >  \cl_t(x,y_i)+s\ez.$$
This contradicts to \eqref{e5.x8}.
From Lemma \ref{l5.x3} and Lemma \ref{l5.x4}, we know that
if $s\in(0,s_0)$ is sufficiently small and   $i$ is sufficiently large, then
\begin{equation}\label{e5.x10.2}
\frac{T^su(\gamma_i(t-s))-u(\gamma_i(t-s))}{s}\leq\|H(x,Du)\|_{L^{\infty}(B(y,r))}+\ez
=\sup_{z\in B(y,r)}S^{+}u(z)+\ez.
\end{equation}
Combining (\ref{e5.x10.2}) with \eqref{e5.x9} and \eqref{e5.x10}, we obtain
\begin{eqnarray}\label{e5.x11}
\frac{T^t u(x)-u(x)}{t}\leq S^{+}u(y)+4\ez.
\end{eqnarray}
This yields  \eqref{e5.x7}.
\end{proof}
\end{lem}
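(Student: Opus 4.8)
The plan is to use the hypothesis that $\{y_i\}$ is a near-maximizing sequence for $T^tu(x)$ converging to $y$ in order to transport the slope $\frac{T^tu(x)-u(x)}{t}$ from the point $x$ to points arbitrarily close to $y$, where it can be estimated by $\|H(\cdot,Du)\|_{L^\infty}$ near $y$ and hence, via Lemma \ref{l5.x4}, by $\sup S^+u$ near $y$. Pointwise convexity of $t\mapsto T^tu(x)$ at the single point $x$ will be used to replace the short-interval increment $T^tu(x)-T^{t-s}u(x)$ by a controlled multiple of $\frac{T^tu(x)-u(x)}{t}$.

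First I fix $\ez>0$ and $0<s<t$, and choose $\gz_i\in\mathcal C(0,t;x,y_i;\overline U)$ with $\int_0^tL(\gz_i(\theta),\gz_i'(\theta))\,d\theta\le\cl_t(x,y_i)+s\ez$. Splitting this integral at $\theta=t-s$ and using subadditivity of the action (as in the proof of Lemma \ref{l4.x4}) gives $\cl_s(\gz_i(t-s),y_i)\le\cl_t(x,y_i)-\cl_{t-s}(x,\gz_i(t-s))+s\ez$. Feeding the trial point $y_i$ into the definition of $T^su(\gz_i(t-s))$, and combining with $u(\gz_i(t-s))-\cl_{t-s}(x,\gz_i(t-s))\le T^{t-s}u(x)$ and $u(y_i)-\cl_t(x,y_i)\to T^tu(x)$, I obtain for all large $i$ that $T^su(\gz_i(t-s))-u(\gz_i(t-s))\ge T^tu(x)-T^{t-s}u(x)-2s\ez$. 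Now pointwise convexity of $\tau\mapsto T^\tau u(x)$ on $[0,\dz(x)]\ni t$ yields $T^{t-s}u(x)\le\frac st u(x)+\frac{t-s}t T^tu(x)$, whence $\frac{T^su(\gz_i(t-s))-u(\gz_i(t-s))}{s}\ge\frac{T^tu(x)-u(x)}{t}-2\ez$.

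It remains to bound this difference quotient from above. Using $S^+u\in\usc(U)$, choose $r>0$ with $\sup_{z\in B(y,r)}S^+u(z)\le S^+u(y)+\ez$. The step I expect to be the main obstacle — precisely because the linear structure $\cl_t(x,y)=tL(\frac{y-x}{t})$ that \cite{acjs} exploited at this point is not available for general $H(x,p)$ — is to show that the intermediate base point $\gz_i(t-s)$ lies in $B(y,r/4)$ once $s$ is small and $i$ is large. I would argue by contradiction: if $\gz_i(t-s)\notin B(y,r/4)$, then, since $y_i\to y$, it lies outside $B(y_i,r/8)$, and Lemma \ref{l3.x3} forces $\cl_s(\gz_i(t-s),y_i)\ge M(\frac{r}{8s})\frac r8\to\fz$ as $s\downarrow 0$; this contradicts the subadditivity bound of the previous paragraph, whose right-hand side stays bounded by $\sup_{z\in B(y,r/8)}\cl_t(x,z)+s\ez$. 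With $\gz_i(t-s)$ localized near $y$, Lemma \ref{l5.x3} (with $\lz=\|H(\cdot,Du)\|_{L^\infty(B(y,r))}$) together with Lemma \ref{l5.x4} give, for $s$ small and $i$ large, $\frac{T^su(\gz_i(t-s))-u(\gz_i(t-s))}{s}\le\|H(\cdot,Du)\|_{L^\infty(B(y,r))}+\ez=\sup_{z\in B(y,r)}S^+u(z)+\ez\le S^+u(y)+2\ez$. Chaining this with the lower bound above gives $\frac{T^tu(x)-u(x)}{t}\le S^+u(y)+4\ez$, and letting $\ez\to0$ concludes the proof.
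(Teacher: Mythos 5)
Your proposal reproduces the paper's argument step by step: the near-optimal curves $\gamma_i$ split at $\theta=t-s$, the lower bound $T^su(\gamma_i(t-s))-u(\gamma_i(t-s))\ge T^tu(x)-T^{t-s}u(x)-2s\varepsilon$, pointwise convexity at $x$, the localization of $\gamma_i(t-s)$ in $B(y,r/4)$ via the superlinear bound of Lemma \ref{l3.x3}, and the upper bound from Lemmas \ref{l5.x3}--\ref{l5.x4}, ending with the same $4\varepsilon$ estimate. This is essentially the paper's own proof.
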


Now we are ready to prove (iv)$\Rightarrow$(i).

%\begin{prop}\label{p4,y1}Suppose $u\in\lip_{\loc}(U)$ is bounded and satisfies the pointwise convexity criterion. Then $u$ is an absolute  subminimizer in $U.$
\begin{proof}[Proof of   (iv)$\Rightarrow$(i). ]
We argue by contradiction.
Suppose that $u$ were not an  absolute subminimizer. Then there exist  $V\Subset U$ and $v\in\lip_{\loc}(V)$, such that $u\geq v$ in $V$, $u=v$ on $\partial V$, and
$$\lambda:=\big\|H(\cdot,Dv)\big\|_{L^{\infty}(V)}<\mu:=\big\|H(\cdot,Du)\big\|_{L^{\infty}(V)}.$$
By Lemma \ref{l5.x4}, this yields that
$$\lambda=\sup_{x\in V}S^{+}v(x)<\sup_{x\in V}S^{+}u(x)=\mu.$$
Select $l\in (\lambda, \mu)$ and define
$$E:=\Big\{x\in\overline{V}: S^{+}u(x)\geq l\Big\}.$$
Since $S^{+}u$ is upper semicontinuous, $E$ is compact and $E\cap V\not=\emptyset$.
It is readily seen that there exists $x_*\in E\cap V$ such that
\begin{eqnarray}\label{e5.x12}
u(x_*)-v(x_*)=\max_{E}(u-v)=:m.
\end{eqnarray}
Since $u$ satisfies the pointwise convexity criteria,
the map $t\mapsto T^t u(x_*)$ is convex on the interval $[0,\delta(x_*)]$ for some $\dz(x_*)>0$.
By Lemma \ref{l5.x3}, if $t>0$ is sufficiently small then
\begin{eqnarray}\label{e5.x13}
\frac{T^t v(x_*)-v(x_*)}{t}<l.
\end{eqnarray}
By Lemma \ref{l4.x2} and Lemma \ref{l3.x5} (iii), there exists
$y_*\in {B\big(x_*,\frac12\dist(x_*,\partial V)\big)}\subset V$ such that
\begin{eqnarray}\label{e5.x14}
T^t u(x_*)=\lim_{y_i\to y_*}\Big\{u(y_i)-\cl_t(x_*,y_i)\Big\}.
\end{eqnarray}
From Lemma \ref{l5.x5}, we  have
\begin{eqnarray}\label{e5.x15}S^{+}u(y_*)\geq\frac{T^tu(x_*)-u(x_*)}{t}\geq S^{+}u(x_*)\geq l.\end{eqnarray}
Hence $y_*\in E.$
On the other hand, by \eqref{e5.x13}, \eqref{e5.x15} and \eqref{e5.x14}, we have
\begin{eqnarray*}
v(y_*)&&=\lim_{i\rightarrow \infty}v(y_i)\leq T^t v(x_*)+\lim_{i\rightarrow\infty}\cl_t(x_*,y_i)\\
&&<lt+v(x_*)+\lim_{i\rightarrow\infty}\cl_t(x_*,y_i)\\
%&&\leq tS^{+}u(x)+v(x)+ \cl_t(x,y) \\
&&\leq T^t u(x_*)-u(x_*)+v(x_*) +\lim_{i\rightarrow\infty}\cl_t(x_*,y_i) \\
&&=\lim_{i\rightarrow\infty} u(y_i)-u(x_*)+v(x_*)+\lim_{i\rightarrow\infty}\big(T^tu(x_*)-[u(y_i)-\cl_t(x_*, y_i)]\big)\\
&&=u(y_*)-u(x_*)+v(x_*).
\end{eqnarray*}
Hence
$$u(y_*)-v(y_*)>  u(x_*)-v(x_*)=m.$$
This contradicts to $y_*\in E$ and  \eqref{e5.x12}. The proof is now complete.
\end{proof}

\section{Proof of Theorem  1.1 and Corollary 1.2}\label{s6}

In this section, we will prove both Theorem 1.1 and Corollary 1.2.
In order to prove Theorem 1.1, we need two lemmas.
The first one is a generalization of the stationary point lemma by
\cite {acjs}, where $H=H(p)$  is independent of $x$.
The proof follows essentially from  \cite[Lemma 2.7]{acjs}
with the action function $tL\lf(\frac{y-x}t\r)$ in \cite{acjs} replaced by the general action function
$\cl_t(x,y)$ defined in this paper. We leave the details to the readers.
\begin{lem} \label{l6.x1}
Assume that $H$ satisfies (A1), (A2)$_\weak$, (A3), and (\ref{u-coercive}).
 Let $\alpha,r>0,$ and $f,g\in C(\overline{U}_r)$ with $\osc_{U}f,\osc_{U}g\leq\alpha.$
 Assume that for some $0<t<\eta_0(\alpha,r)$,
\begin{eqnarray}\label{e6.x1}
T^t f(x)+T_t f(x)-2f(x)\geq0 \geq T^t g(x)+T_t g(x)-2g(x), \ \forall\ x\in U_{2r}.
\end{eqnarray}
Then either
\begin{eqnarray}\label{e6.x2}
\max_{\overline{U}_r}(f-g)=\max_{\overline{U}_r\backslash U_{2r}}(f-g),
\end{eqnarray}
or there exists $x\in U_{2r}$ such that
\begin{eqnarray}\label{e6.x3}
f(x)=T^t f(x)=T_t f(x)\;\;\textrm{and}\;\;g(x)=T^t g(x)=T_t g(x).
\end{eqnarray}
\end{lem}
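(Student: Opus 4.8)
The plan is to argue by contradiction with the first alternative \eqref{e6.x2}, following \cite[Lemma~2.7]{acjs} with the explicit action $tL(\frac{y-x}{t})$ there replaced throughout by $\cl_t(x,y)$. Suppose \eqref{e6.x2} fails and put $M:=\max_{\overline U_r}(f-g)$, so that $M>\max_{\overline U_r\setminus U_{2r}}(f-g)$; since $U$ is bounded and $f-g\in C(\overline U_r)$, the coincidence set $S:=\{x\in\overline U_r:\ f(x)-g(x)=M\}$ is a nonempty compact subset of $U_{2r}$. The goal is to exhibit $x_*\in S$ at which \eqref{e6.x3} holds. Since $0<t<\eta_0(\az,r)$ and $\osc_Uf,\osc_Ug\le\az$, Lemma \ref{l4.x2}(i) localizes the forward flow: $T^tf(x)=\sup_{y\in B(x,r)}\{f(y)-\cl_t(x,y)\}$ for $x\in U_{2r}$, and likewise for $g$; applying the same lemma to the flow of $\widehat H(x,p):=H(x,-p)$ (which also satisfies (A1), (A2)$_\weak$, (A3) and \eqref{u-coercive}) and using Remark \ref{r2.x3}(iii) gives the companion localization $T_tf(x)=\inf_{y\in B(x,r)}\{f(y)+\cl_t(y,x)\}$ for $x\in U_{2r}$, and likewise for $g$, after shrinking $\eta_0(\az,r)$ if needed so that it serves the flows of both $H$ and $\widehat H$. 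Since $B(x,r)\subset U_r$ for $x\in U_{2r}$ and $f-g\le M$ on $\overline U_r$, these identities yield
$$T^tf(x)-T^tg(x)\le M,\qquad T_tf(x)-T_tg(x)\le M,\qquad x\in U_{2r}.$$

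The next step is to pin down the flows on $S$. Fix $x\in S$. Subtracting the two halves of \eqref{e6.x1} and using $f(x)-g(x)=M$ gives $\big(T^tf(x)-T^tg(x)\big)+\big(T_tf(x)-T_tg(x)\big)\ge 2M$; together with the two bounds just obtained, both of these differences must equal $M$, and hence \emph{both} inequalities in \eqref{e6.x1} are equalities at $x$. Writing $\phi(x):=T^tf(x)-f(x)$, which is nonnegative by \eqref{e2.x3}, these equalities force
$$\phi(x)=T^tf(x)-f(x)=f(x)-T_tf(x)=T^tg(x)-g(x)=g(x)-T_tg(x),\qquad x\in S.$$
In particular, any $x_*\in S$ with $\phi(x_*)=0$ satisfies \eqref{e6.x3}, so it suffices to find a zero of $\phi$ on $S$.

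Suppose, for contradiction, that $\phi>0$ on $S$. Then $T^tf$ is lower semicontinuous near $S$: locally it is a supremum of the functions $x\mapsto f(y)-\cl_t(x,y)$, each lower semicontinuous since $x\mapsto\cl_t(x,y)=\widehat{\cl}_t(y,x)$ is upper semicontinuous by Lemma \ref{l3.x5}(iii) applied to $\widehat H$ together with Remark \ref{r2.x3}(iii); so $\phi=T^tf-f$ is lower semicontinuous and $\ez_0:=\min_S\phi>0$ by compactness of $S$. Now iterate a ``flow-forward'' step. Take $x_0\in S$; given $x_k\in S$, pick $y_i\in B(x_k,r)$ with $f(y_i)-\cl_t(x_k,y_i)\to T^tf(x_k)$ as $i\to\fz$. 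Since $\cl_t(x_k,\cdot)$ is bounded on $\overline{B(x_k,r)}$ (Lemma \ref{l3.x5}(i)), after passing to a subsequence $y_i\to x_{k+1}\in\overline{B(x_k,r)}\subset\overline U_r$ and $\cl_t(x_k,y_i)\to c\ge 0$. From $T^tg(x_k)=T^tf(x_k)-M$ and $g(y_i)-\cl_t(x_k,y_i)\le T^tg(x_k)$ one checks that $f(y_i)-g(y_i)\to M$, whence $f(x_{k+1})-g(x_{k+1})=M$, i.e. $x_{k+1}\in S\subset U_{2r}$ (so the induction is legitimate), while also $f(x_{k+1})=T^tf(x_k)+c\ge f(x_k)+\phi(x_k)\ge f(x_k)+\ez_0$. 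Iterating, $f(x_N)-f(x_0)\ge N\ez_0$ for every $N$, contradicting $\osc_Uf\le\az$. Hence $\phi$ vanishes at some $x_*\in S$, and \eqref{e6.x3} holds there.

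The step I expect to be the main obstacle is this last one, and within it the \emph{failure of attainment} of the suprema defining $T^tf$: under (A1)--(A3) and \eqref{u-coercive} the action $\cl_t(\cdot,\cdot)$ is only upper semicontinuous (Lemma \ref{l3.x5}(iii)), so the iteration must be carried out through approximate maximizers $y_i$ and a compactness passage to the limit, exactly in the spirit of the limit formulation \eqref{e5.x6} in Lemma \ref{l5.x5}; the localization of the first paragraph is precisely what confines every auxiliary point to $U_{2r}$, keeping \eqref{e6.x1} and $f-g\le M$ on $\overline U_r$ available at each stage. Apart from this point, the argument is a routine transcription of that in \cite{acjs}, and the remaining estimates are left to the reader.
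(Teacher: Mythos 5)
Your proof is correct and carries out exactly what the paper asks the reader to do: it transposes \cite[Lemma~2.7]{acjs} with $tL\big(\tfrac{y-x}{t}\big)$ replaced by $\cl_t(x,y)$, and it correctly identifies and handles the one place where the generalization is not entirely mechanical, namely that the supremum defining $T^tf$ need not be attained (only upper semicontinuity of $\cl_t(\cdot,\cdot)$ is available via Lemma~\ref{l3.x5}(iii) and Remark~\ref{r2.x3}(iii)), which is why the forward-flow iteration must be run through approximate maximizers and a compactness extraction. Since the paper omits the details, there is nothing to compare line-by-line, but the derivation that all four inequalities in \eqref{e6.x1} and the localization bounds become equalities on the argmax set $S$, the reduction to producing a zero of $\phi=T^tf-f$ on $S$, the lower semicontinuity of $\phi$ and compactness of $S\subset U_{2r}$ giving $\ez_0:=\min_S\phi>0$, and the contradiction $f(x_N)-f(x_0)\ge N\ez_0$ with $\osc_U f\le\az$ are all sound.
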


%\begin{proof}
%Consider the case that \eqref{e6.x2} does not hold. Then the set
%$$E:=\left\{x\in\overline{U}_r:(f-g)(x)=\max_{\overline{U}_r}(f-g)\right\}$$
%is nonempty, closed, and contained in $U_{2r}.$ Define the set
%$$F:=\left\{x\in E:f(x)=\max_{E}f\right\}$$
%which is nonempty, closed, and contained in $E\subseteq U_{2r}.$ Select a point $x\in F.$
%Since $x\in E,$ we have
%\begin{eqnarray}\label{l5,z4}
%g(y)-g(x)\geq f(y)-f(x)\;\;\textrm{for every}\;\;y\in U_r.
%\end{eqnarray}
%Using \eqref{e6.x1}, \eqref{l5,z4} and \eqref{e6.x1} again, in that order, together with $x\in U_{2r},$ and $t<t_0(\alpha,r),$ we deduce
%\begin{eqnarray}\label{l5,z5}
%T^t f(x)-f(x)\geq f(x)-T_t f(x)\geq g(x)-T_t g(x)\geq T^t g(x)-g(x).
%\end{eqnarray}
%Applying \eqref{l5,z4} again, we also get $T^t f(x)-f(x)\leq T^t g(x)-g(x),$ and thus we must have equality in every inequality of \eqref{l5,z5}. That is
%\begin{eqnarray}\label{l5,z6}
%T^t f(x)-f(x)= f(x)-T_t f(x)= g(x)-T_t g(x)= T^t g(x)-g(x).
%\end{eqnarray}
%Select a point $z\in \overline{B}(x,r)\subseteq U_r$ such that
%$$T^t f(x)=f(z)-\cl_t(x,z).$$
%Combining \eqref{l5,z6}, we observe that
%\begin{eqnarray*}
%g(z)&&\leq\cl_t(x,z)+T^t g(x)\\
%&&=\cl_t(x,z)+T^t f(x)+g(x)-f(x)\\
%&&=f(z)+g(x)-f(x).
%\end{eqnarray*}
%Therefore, $f(z)-g(z)\geq f(x)-g(x)$ and thus $z\in E.$ Recalling that $x\in F,$ we see that
%$$f(z)\leq f(x)\leq T^t f(x)=f(z)-\cl_t(x,z)\leq f(z).$$
%Thus $f(x)=T^t f(x).$
%Recalling \eqref{l5,z6}, we have \eqref{e6.x3}.
%\end{proof}
%\end{lem}

The second one is a patching lemma.

\begin{lem}\label{l6.x2}  Assume that $H$ satisfies (A1), (A2), (A3), and (\ref{u-coercive}).
If $u\in \lip_{\loc}(U)\cap C(\overline{U})$ satisfies the pointwise convexity criteria,
then there exists a family of functions $\left\{u_{\sz}\right\}_{\sz>0}$ such that
\begin{enumerate}
  \item[(i)] for every $\sz>0$, $u_{\sz} \in\lip_{\loc}(U)\cap C(\overline{U}) $, $u_{\sz}=u$ on $\partial U$ and $u_{\sz}\leq u$ on $\overline{U}$.
  \item[(ii)] $u_{\sz}\rightarrow u$ uniformly on $\overline{U}$ as $\sz\to 0$.
  \item[(iii)] for every $\sz>0,$   $u_{\sz}$ satisfies the pointwise convexity criteria.
  \item[(iv)] for every $x\in U$ and $\sz>0$, $S^{+}u_{\sz}(x)\geq\sz$.
\end{enumerate}
\end{lem}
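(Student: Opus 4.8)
The plan is to construct $u_\sz$ by subtracting a small correction from $u$ that restores a strictly positive lower slope while keeping the boundary data and the pointwise convexity criteria. First I would fix $\sz>0$ and, motivated by the fact that $S^+ u(x)$ is exactly $\|H(\cdot,Du)\|_{L^\infty}$ on small balls (Lemma \ref{l5.x4}), define a candidate $u_\sz$ that pushes the slope up to at least $\sz$ wherever $S^+u(x)<\sz$. The natural device is to take an auxiliary function whose Hamilton-Jacobi flow has slope exactly $\sz$: using the intrinsic pseudo-distance $d_\sz$, the function $x\mapsto -d_\sz(x_0,x)$ (or more precisely a family of such cones, one for each point of $\partial U$ or of a fine net, combined by infimum) is an absolute minimizer-type barrier with $H(\cdot, D(-d_\sz(x_0,\cdot)))\le \sz$; one builds $u_\sz$ by taking a suitable infimal convolution or patching of $u$ against translates of $u-$const and such $d_\sz$-cones so that (i) $u_\sz = u$ on $\partial U$, (ii) $u_\sz\le u$, and (iii) $u_\sz\to u$ uniformly as $\sz\to 0$ (since $d_\sz\to 0$ uniformly on compacta by Lemma \ref{l2.w16} and (A3), $r_\sz\to$ something controllable). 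A cleaner route, following the spirit of \cite{acjs}, is: let $g$ be a $d_\sz$-based function on $\overline U$ agreeing with $u$ on $\partial U$ with slope criteria $\ge\sz$ everywhere, and set $u_\sz := \min\{u, \text{(sup of $d_\sz$-cones from below pinned at }\partial U)\}$, but since $u$ itself may already dominate such a lower envelope, one instead perturbs locally only on $\{S^+u<\sz\}$.

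The key steps, in order, would be: (1) Use Lemma \ref{l5.x4} to identify $\{x\in V: S^+u(x)<\sz\}$ with the open set where $\|H(\cdot,Du)\|_{L^\infty}$ is locally $<\sz$; on its complement nothing needs to be done. (2) On that open set, replace $u$ by the function obtained from the convexity/CICB-type construction with respect to $d_\sz$ — concretely, one takes $u_\sz(x) = \inf_{z\in \partial U \cup \Sigma}\{u(z) + d_\sz(x,z)\}$ where $\Sigma$ is a sufficiently dense finite set in $\overline U$ with $u$-values prescribed below $u$; this is the largest function $\le u$ on $\Sigma$, $=u$ on $\partial U$, with $H(\cdot, Du_\sz)\le \sz$ a.e. by Lemma \ref{l3.x9}, hence (by Lemma \ref{l5.x4}) $S^+ u_\sz \le \sz$, and one arranges the $\Sigma$-constraints so that $S^+u_\sz \ge \sz$ as well, forcing equality. (3) Verify (i): $u_\sz=u$ on $\partial U$ since the $z=x$ term in the infimum vanishes and the others are nonnegative on $\partial U$ by the choice of net values; $u_\sz\le u$ on $\overline U$ trivially from the $z\in\Sigma$ terms with $u$-values $\le u$; and $u_\sz\in \lip_\loc\cap C(\overline U)$ from the Lipschitz continuity of $d_\sz$ (Lemma \ref{l2.w16} again) and Lemma \ref{l3.x5}. (4) Verify (ii): as $\sz\downarrow 0$, the barriers $u(z)+d_\sz(x,z)$ converge to $u(z)$ plus something that, by (A3) and Lemma \ref{l2.w16}, forces $u_\sz\to u$ uniformly, provided the net $\Sigma$ and its $u$-values are chosen to depend on $\sz$ so that the drop is $O(\sz)\cdot$diam; one quantifies this with $r_\sz, R_\sz$. (5) Verify (iii): $u_\sz$ is (by construction, being an infimum of $d_\sz$-cones) an absolute subminimizer for the rescaled problem at level $\sz$, hence — invoking Theorem \ref{t1.x4} — satisfies the pointwise convexity criteria. (6) Verify (iv): the lower bound $S^+u_\sz \ge \sz$ is forced by the net constraints becoming active; this is the crux.

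The main obstacle I expect is step (6) together with the simultaneous bookkeeping in (2)–(4): one must choose the auxiliary net $\Sigma$ and the prescribed values on it carefully enough that (a) $u_\sz$ stays $\le u$ and equals $u$ on $\partial U$, (b) $u_\sz\to u$ uniformly, yet (c) $S^+u_\sz(x)\ge\sz$ at \emph{every} $x\in U$, not just on the bad set. The tension is that making the slope large everywhere pushes $u_\sz$ down, fighting uniform convergence. The resolution is that $d_\sz(x,y)\le R_\sz d_U(x,y)$ with $R_\sz\to 0$ as $\sz\to 0$ (from (A3), since $r_\sz\to\infty$ means the sublevel sets blow up, forcing $L_\sz$ and hence $R_{\sz}$ downward is actually the \emph{wrong} direction — here one instead notes $d_\sz(x,y)\le R_\sz d_U(x,y)$ and $R_\sz$ need not vanish, so one must instead use that the set where the constraint is active has $u$-drop bounded by the mesh of $\Sigma$). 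I would therefore use a \emph{two-scale} argument: a coarse correction of size $O(\sz\,\mathrm{diam}\,U)$ to guarantee (iv) plus a partition-of-unity-free patching (as in \cite[\S6.2]{acjs}, using Lemma \ref{l6.x1} and Theorem \ref{t3.x6}) to preserve the pointwise convexity criteria. The remaining verifications are then routine consequences of Lemmas \ref{l2.w16}, \ref{l3.x5}, \ref{l3.x9}, and \ref{l5.x4}, together with Theorem \ref{t1.x4}.
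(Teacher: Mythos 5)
Your high-level intuition is right (modify $u$ only where $S^+u<\sz$, use $d_\sz$-based barriers), and you correctly identify the central tension: pushing the slope up to $\sz$ everywhere competes with uniform convergence. But the resolution you sketch is not the one that works, and the gap you flag in your own step (6) is precisely the place where the proof needs a new input.

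The paper's construction is not an infimal convolution over a net. It sets $V_\sz=\{S^+u<\sz\}$ (open by USC of $S^+u$) and, on $\overline{V_\sz}$, defines $v_\sz(x)$ as a \emph{supremum} of $u(x_N)-\sum_i d_\sz(x_i,x_{i+1})$ over finite chains $[x=x_0,\dots,x_N]$ lying in $\overline{V_\sz}$ whose terminal point $x_N$ is on $\partial V_\sz$ and whose consecutive links are $d_\sz$-geodesics through $V_\sz$. Then $u_\sz:=v_\sz$ on $\overline{V_\sz}$ and $u_\sz:=u$ elsewhere. This automatically gives $u_\sz\le u$ (Lemma~\ref{l3.x9}/Remark~\ref{r3.x10} applied link by link), $u_\sz=u$ on $\partial V_\sz$ and on $\partial U$, and a one-sided Lipschitz estimate $v_\sz(x)-v_\sz(y)\le d_\sz(y,x)$ that yields $\lip_\loc$ and continuity. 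The key slope identity (your step (6)) is then proven as an \emph{equality} $T^tu_\sz(x)-u_\sz(x)=\sz t$ for $x\in V_\sz$ and small $t$: the upper bound comes from the chain inequality plus $\cl_t\ge d_\sz-\sz t$, and the matching lower bound is obtained by hitting the boundary $\partial E^{<t}_\sz(x)\subset E^t_\sz(x)$ with one of the chain links and invoking Theorem~\ref{t3.x6}(i). Your proposal never pins this down; taking $u_\sz$ as an inf of $d_\sz$-cones over a net plus $\partial U$ does not by itself force the slope to be $\ge\sz$ at every interior point, and invoking Theorem~\ref{t1.x4} for the ``rescaled problem'' is not justified because $u_\sz$ so defined is not an absolute subminimizer for $H$.

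More importantly, the tension you correctly identify between (ii) and (iv) is \emph{not} resolved by quantifying $R_\sz$ or the net mesh (and indeed you note $R_\sz$ need not vanish). It is resolved by Lemma~\ref{l6.x3}: if $u$ and $u_\sz$ agree on $\partial U$ and $S^+u+S^+u_\sz\le 2\sz$, then $\|u-u_\sz\|_\infty\le\ez(\sz)\to0$. That lemma is the one and only place in the paper that uses the full strength of (A2) (the hyperplane containment of $\bigcup_x\{H(x,\cdot)=0\}$) rather than (A2)$_\weak$; it exploits the normal direction $q$ to the hyperplane to propagate smallness from $\partial U$ into $U$. Since the modification happens only on $V_\sz$, where $S^+u\le\sz$ and (by construction) $S^+u_\sz=\sz$, Lemma~\ref{l6.x3} applied on $\overline{V_\sz}$ gives the uniform convergence. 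Without this mechanism your argument cannot close, and the example in Remark~1.3 shows it genuinely cannot: under (A2)$_\weak$ alone, uniqueness fails, so any construction that ignores the hyperplane structure will break at step (ii) or (iv).
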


The proof of Lemma \ref{l6.x2} will be given by in the subsection 6.1, which is based on careful analysis of
properties of $H(x,p)$, $d_\lz(x,y)$, and $\cl_t(x,y)$. We would like to point out that Lemma \ref{l6.x2}
has been proven  by \cite[Lemma 5.1]{acjs} when $H=H(p)$ has no $x$-dependence.

Since Hamiltonian functions in Theorem 1.1 are assumed to satisfy (A1), (A2), (A3), but not (\ref{u-coercive}),
we can't directly apply Theorem \ref{t1.x4},  Lemma \ref{l6.x1} and Lemma \ref{l6.x2} to prove Theorem 1.1.
To overcome this, we need an elementary lemma.

\begin{lem}\label{l6.x2.0} (i) For any nonnegative $H\in\lsc(\overline U\times\rn)$, $u\in W^{1,\infty}_{\rm{loc}}(U)$
is an absolute subminimizer (or super minimizer)
for $H$ iff it is an absolute subminimizer (or super minimizer) for $H^a$ for any $a\in (1, +\infty)$.\\
(ii) If $H:\overline U\times\rn\to\mathbb R_+$ satisfies (A1), (A2), and (A3), then for any $a\in (1,+\infty)$,
$H^a$ satisfies (A1), (A2), (A3), and (\ref{u-coercive}).
\end{lem}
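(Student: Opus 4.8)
The plan is to reduce both parts to the single elementary fact that, for $a\in(1,\fz)$, the map $\phi(t)=t^a$ is a strictly increasing continuous bijection of $[0,\fz)$ onto itself which is moreover convex and nondecreasing.

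For part (i), fix an open set $V\Subset U$ and write $\cf_\fz(w,V;G):=\|G(\cdot,Dw)\|_{L^\fz(V)}$ for the $L^\fz$-functional attached to a nonnegative Hamiltonian $G$. For any $w\in W^{1,\fz}(V)$ the function $H(\cdot,Dw)$ is nonnegative and measurable, and since $\phi$ is continuous and increasing it commutes with the essential supremum of a nonnegative function; thus
$$\cf_\fz(w,V;H^a)=\big\|H(\cdot,Dw)^a\big\|_{L^\fz(V)}=\Big(\esup_{V}H(\cdot,Dw)\Big)^a=\big(\cf_\fz(w,V;H)\big)^a .$$
Since $\phi$ is strictly increasing, for any two competitors $u,v$ one has $\cf_\fz(u,V;H)\le\cf_\fz(v,V;H)$ if and only if $\cf_\fz(u,V;H^a)\le\cf_\fz(v,V;H^a)$. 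The classes of admissible competitors in Definition \ref{d2.x1} (the one-sided constraints $v\le u$ or $v\ge u$ in $V$, together with $v=u$ on $\partial V$) do not involve $H$, so $u$ is an absolute subminimizer (respectively superminimizer, respectively minimizer) for $H$ in $U$ exactly when it is one for $H^a$. This proves (i).

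For part (ii), I would verify (A1), (A2), (A3) and \eqref{u-coercive} for $H^a$ in order. (A1): the sublevel sets satisfy $\{(x,p):H^a(x,p)\le c\}=\{(x,p):H(x,p)\le c^{1/a}\}$ for $c\ge 0$ (and are empty for $c<0$), hence are closed because $H$ is lower semicontinuous, so $H^a$ is lower semicontinuous; and $H^a(x,\cdot)=\phi\circ H(x,\cdot)$ is convex, being a composition of the convex nondecreasing $\phi$ with the nonnegative convex $H(x,\cdot)$. (A2): $H^a(x,0)=H(x,0)^a=0=\min_{p\in\rn}H^a(x,p)$, and $t^a=0$ iff $t=0$, so $\{p:H^a(x,p)=0\}=\{p:H(x,p)=0\}$ and therefore $\bigcup_{x\in\overline U}\{p:H^a(x,p)=0\}$ is the same set as for $H$ and lies in the hyperplane $P$. (A3): for $\mu>0$ we have $\{p:H^a(x,p)<\mu\}=\{p:H(x,p)<\mu^{1/a}\}$, so taking $r'_\mu:=r_{\mu^{1/a}}$ and $R'_\mu:=R_{\mu^{1/a}}$ yields $0<r'_\mu\le R'_\mu<\fz$, the inclusions $B(0,r'_\mu)\subset\{p:H^a(x,p)<\mu\}\subset B(0,R'_\mu)$ for all $x\in\overline U$, and $\lim_{\mu\to\fz}r'_\mu=\lim_{\lz\to\fz}r_\lz=\fz$ since $\mu^{1/a}\to\fz$. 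Finally \eqref{u-coercive}: from (A3) for $H$ together with the convexity of $H(x,\cdot)$ and $H(x,0)=0$, comparing $H(x,p)$ with its value at the point $R_1\,p/|p|$ (which lies outside $\{p:H(x,p)<1\}$) gives the uniform lower bound $H(x,p)\ge|p|/R_1$ whenever $|p|\ge R_1$, whence $H^a(x,p)/|p|\ge|p|^{a-1}/R_1^{\,a}\to\fz$ as $|p|\to\fz$, uniformly in $x\in\overline U$.

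As the word \emph{elementary} in the statement indicates, I do not expect a serious obstacle; the only points deserving a moment's care are the commutation of $t\mapsto t^a$ with the essential supremum used in part (i) and the standard composition rules (preservation of lower semicontinuity and of convexity) invoked for (A1).
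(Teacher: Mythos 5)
Your proof is correct and takes essentially the same route as the paper: part (i) follows from the observation that $t\mapsto t^a$ commutes with $\esssup$ and preserves order (the paper simply calls this ``obvious''), and for part (ii) the paper likewise proves only the superlinearity \eqref{u-coercive}, via exactly the same convexity argument along rays from the origin using a fixed radius on which $H$ is bounded below (the paper uses $R_3$ and the bound $H\ge 2$; you use $R_1$ and $H\ge 1$), and declares the verification of (A1)--(A3) for $H^a$ ``easy to see,'' which you have spelled out in full.
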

\begin{proof} (i) is obvious from the definition of absolute subminimizer (or superminimizer).
To see (ii), first observe that
(A3) yields that
$$B(0, r_1)\subset \bigcup_{x\in\overline U}\big\{p:\ H(x,p)\le 2\big\}\subset B(0, R_3).$$
This, combined with $H(x,0)=0$ and convexity of $H(x,\cdot)$ for $x\in\overline U$, further
implies that
$$H(x, p)\ge 2, \ \forall\ x\in\overline U, \ p\in\partial B(0, R_3).$$
Applying the convexity of $H(x,\cdot)$ again, we obtain that
$$2\le H\big(x, R_3\frac{p}{|p|}\big)\le \big(1-\frac{R_3}{|p|}\big)H(x,0) + \frac{R_3}{|p|}H(x,p)=
\frac{R_3}{|p|}H(x,p)$$
for any $x\in\overline U$ and $p\in \rn\setminus B(0, R_3)$.
Therefore we have that
\begin{equation}\label{e6.x101}
H(x,p)\ge \frac{2}{R_3} |p|, \ \forall x\in\overline U, \ p\in\rn \ {\rm{with}}\ |p|\ge R_3.
\end{equation}
This yields that for any $a\in (1,+\infty)$,
\begin{equation}\label{e6.x102}
H^a(x,p)\ge \big(\frac{2}{R_3}\big)^a |p|^a, \ \forall x\in\overline U, \ p\in\rn \ {\rm{with}}\ |p|\ge R_3.
\end{equation}
Hence $H^a$ satisfies (\ref{u-coercive}). It is easy to see that $H^a$ also satisfies (A1), (A2), and (A3).
\end{proof}

\begin{proof}[Proof of Theorem 1.1]
It follows from Lemma \ref{l6.x2.0} that, after replacing $H$ by $H^a$ for any $a>1$, we may simply
assume that $H$ satisfies, in addition to (A1), (A2), and (A3), the uniform coercivity condition (\ref{u-coercive}).
Since $u \in C(\overline{U})$ is an absolute subminimizer,
it follows from Theorem \ref{t1.x4} that $u$ satisfies the pointwise convexity criteria.
Let $\{u_{\sz}\}_{\sz>0}\subseteq\lip_{\loc}(U)\cap C(\overline{U})$
be a family of approximation functions of $u$ given by Lemma \ref{l6.x2}.
From the equivalence between pointwise convexity and convexity property,
for any $r>0$ there exists $\delta=\delta(r, \sigma)>0$ such that
the map $t\mapsto T^t u_\sigma(x)$ is convex on $[0,\delta]$ for all $x\in U_{2r}$.
It follows that when $t>0$ is sufficient small and $0<h<t$, it holds
\begin{eqnarray*}
&&\frac{ T^{2t} u_{\sz}(x)- T^t u_{\sz}(x)}{t}\ge \frac{T^{t+h}u_\sigma(x)-T^t u_\sigma(x)}{h}\\
&&\ge \frac{T^tu_\sigma(x)-u_\sigma(x)}{t}\ge \frac{T^hu_\sigma(x)-u_\sigma(x)}{h},
\ \forall\ x\in U_{2r}.
\end{eqnarray*}
Sending $h$ to zero, this implies
\begin{equation}\label{e6.x4.0}
T^{2t}u_\sigma(x)+u_\sigma(x)-2T^tu_\sigma(x)\ge 0, \ \forall\ x\in U_{2r},
\end{equation}
and
\begin{equation}\label{e6.x4}
\frac{ T^{2t} u_{\sz}(x)- T^t u_{\sz}(x)}{t}\ge S^{+}T^t u_{\sz}(x)\geq S^{+}u_{\sz}(x)\ge \sigma,
\ \forall\ x\in U_{2r}.
 \end{equation}

On the other hand, since $v \in C(\overline{U})$ is an absolute superminimizer for $H$,
Remark \ref{r2.x3}  implies that $\hat v=-v  $ is an absolute subminimizer for $\hat H(x,p)=H(x,-p)$.
By Theorem 1.5, $v$ satisfies the convexity criteria for $\hat H$.
Let $\{\hat v _{\sz}\}_{\sz>0}\subseteq\lip_{\loc}(U)\cap C(\overline{U})$ be a family of
approximation functions to $\hat v$ given by Lemma \ref{l6.x2} for $\hat H$.
Then we may assume that $t\mapsto {\hat T}^t\hat v_\sigma(x)$ is convex on $[0,\delta)$
for all $x\in U_{2r}$. As above, we obtain that when $t>0$ is sufficiently small, it holds
\begin{equation}\label{e6.x5.0}
{\hat T}^{2t}{\hat v_\sigma}(x)+\hat v_\sigma(x)-2{\hat T}^t {\hat v}_\sigma(x)\ge 0, \ \forall \ x\in U_{2r},
\end{equation}
and
\begin{equation}\label{e6.x5.1}
\frac{ \hat T^{2t} \hat v_{\sz}(x)- \hat T^t \hat v_{\sz}(x)}{t}\ge S^{+}\hat T^t \hat v_{\sz}(x)\geq S^{+}\hat v_{\sz}(x)\ge
\sigma, \ \forall \ x\in U_{2r}.
\end{equation}
Set $v_\sz=-\hat v_\sz$. Then $\hat T^{s} \hat v_{\sz}(x)=  -T_{ s} v_\sz(x)$ for any $s>0$ and $x\in U$.
It follows from (\ref{e6.x5.0}) and (\ref{e6.x5.1}) that
\begin{equation}\label{e6.x5.2}
T_{2t}v_\sz(x)+v_\sigma(x)-2T_t v_\sz(x)\le 0, \ \forall\ x\in U_{2r},
\end{equation}
and
\begin{equation}\label{e6.x5.3}\frac{  T_{2t}   v_{\sz}(x)-   T_t   v_{\sz}(x)}{t} \le -\sigma,
\ \forall\ x\in U_{2r}.
\end{equation}
By Lemma \ref{l4.x4}, $T^tT^t u_{\sz}(x)=T^{2t}u_\sz(x)$, and by the definition of $T^t$ and $T_t$
we have that $T_tT^t u_{\sz}(x)\ge u_{\sz}(x)$. Hence  (\ref{e6.x4.0}) yields that
$$T^tT^t u_{\sz}(x)+T_tT^t u_{\sz}(x)-2T^t u_{\sz}(x)\geq 0, \ \forall\ x\in U_{2r}.$$
Similarly, by (\ref{e6.x5.2}) we have
$$T_tT_t v_{\sz}(x)+T^tT_t v_{\sz}(x) -2T_t v_{\sz}(x)\leq0, \ \forall\ x\in U_{2r}.$$
Applying Lemma \ref{l6.x1} with $f=T^t u_{\sz}(x)$ and $g=T_t v_{\sz}(x)$,
we conclude that either
\begin{eqnarray}\label{e6.x6}
\max_{\overline{U_r}}(T^t u_{\sz}-T_t v_{\sz})=\max_{\overline{U_r}\backslash U_{2r}}(T^t u_{\sz}-T_t v_\sz) ,
\end{eqnarray}
or there exists a point $x_0\in U_{2r}$ such that
\begin{eqnarray}\label{e6.x7}
T^t u_{\sz}(x_0)=T^{2t} u_{\sz}(x_0)=T_t T^t u_{\sz}(x_0), \  \ T_t v_{\sz}(x_0)=T^t T_t v_{\sz}(x_0)
=T_{2t} v_{\sz}(x_0).
\end{eqnarray}
It follows from \eqref{e6.x4} and \eqref{e6.x5.3} that \eqref{e6.x7} doesn't hold.
Hence \eqref{e6.x6} holds. Sending $t\to0, \sz\to0, r\to0$ in the order,
(\ref{e6.x6}) implies, with the help of Remark \ref{r4.x3}, that
$$\max_{\overline U}(u-v)=\max_{\partial U}(u-v). $$
In particular, there is a unique absolute minimizer with boundary value $g\in C(\partial U)$.
The proof is complete.
\end{proof}

\begin{proof}[Proof of Corollary 1.2]
It suffices to verify that every Hamiltonian function
$H(x,p)$ in Corollary1.2 satisfies   (A1), (A2)  and (A3) of Theorem 1.1.

It is obvious that $H(x,p)$ satisfies (A1).
 (A2) also holds, since
$$\displaystyle\bigcup_{x\in \overline U}\Big\{p: H(x,p)=0\Big\}=\big\{0\big\}.$$
To see that $H$ satisfies (A3),  first observe that since for each $x\in\overline U$,
$H(x,\cdot)$ is convex and $H(x,p)>0$ for each $p\in \mathbb S^{n-1}$,
we have that $ H(x, tp)$ is strictly increasing with respect to $t\in [0,+\infty)$
and $\displaystyle\lim_{t\to\fz}H(x,tp)=\fz$. Hence for any $\lambda>0$ and $x\in\overline U$,
$$E(x,\lambda):=\big\{p\in \rn: \ H(x,p)<\lambda\big\}$$ is a bounded convex set, which has
a non-empty interior part and contains $0$. For each $\lambda>0$ and $x\in\overline U$, define
$$r(x,\lambda):=\sup\Big\{r>0\ | \ B(0,r)\subset E(x,\lambda)\Big\},
\ R(x,\lambda):=\inf\Big\{R>0\ | \ E(x,\lambda)\subset B(0,R)\Big\}.
$$
It is well-known that
$$0<r(x,\lambda)\le R(x,\lambda)<+\infty, \ \forall\ \lambda>0, \ x\in\overline U.$$
Now we need\\
\noindent{\it Claim}. {\it For any $\lambda>0$, i) $r(\cdot,\lambda)$ is lower semicontinuous
on $\overline U$; and ii) $R(\cdot,\lambda)$ is upper semicontinuous  on $\overline U$.}

To see it,  let $\{x_i\}\subset\overline U$ and $x_0\in\overline U$ be such that
$\displaystyle\lim_{i\rightarrow\infty} x_i=x_0$.  For a given $\lambda>0$,
we want to show $\displaystyle r_0:=\liminf_{i} r(x_i,\lambda)\ge r(x_0,\lambda)$.
We may assume that there exist  a subsequence $\{x_{i_k}\}\subset\{x_i\}$
 such that $\displaystyle\lim_{k} r(x_{i_k},\lambda)=r_0$.
By the definition of $r(x_{i_k}, \lambda)$, there exists $p_k\in\rn\setminus\{0\}$ such that
$|p_k|=r(x_{i_k}, \lambda) \ {\rm{and}}\ H(x_{i_k}, p_k)=\lambda.$ Assume
that $\displaystyle\lim_{k} p_k=p_0$. Since $H\in C(\overline U\times \rn)$, it follows
that $|p_0|=r_0$ and $H(x_0, p_0)=\lambda$. Since $H(x_0,\cdot)$ is convex and $H(x_0,0)=0$, it follows
that for any $t>1$, $H(x_0, tp_0)\ge t\lambda>\lambda$. By the definition of
$r(x_0,\lambda)$ we then have $r(x_0, \lambda)\le r_0.$ Similarly, we can show
that $\displaystyle R_0:=\limsup_{i} R(x_i,\lambda)\le  R(x_0,\lambda)$.

It follows from the Claim that for any $\lambda>0$, there exist $x_1, x_2\in \overline U$ such
that
$$r_\lambda:=r(x_1,\lambda)=\min_{x\in\overline U} r(x,\lambda),
\ R_\lambda:=R(x_2,\lambda)=\max_{x\in\overline U} R(x,\lambda).$$
It is clear that $0<r_\lambda<R_\lambda$, and
$$B(0, r_\lambda)\subset\bigcap_{x\in\overline U} E(x,\lambda)
\subset \bigcup_{x\in\overline U} E(x,\lambda)\subset B(0, R_\lambda).$$
To see $\displaystyle\lim_{\lambda\rightarrow\infty}r_\lambda=+\infty$. We argue by contradiction.
For, otherwise, there exists $\lambda_i\rightarrow\infty$ such that
$\displaystyle\lim_{\lambda_i\rightarrow\infty} r_{\lambda_i}=r_\infty<+\infty$.
This implies that there exist $x_i\in\overline U$ and $p_i\in \rn$ with $|p_i|=r_{\lambda_i}$
such that $H(x_i, p_i)=\lambda_i$. After passing to a subsequence, we may assume
that $x_i\rightarrow x_\infty$ for some $x_\infty\in\overline U$,
and $p_i\rightarrow p_\infty$ for some $p_\infty\in \rn$ with $|p_\infty|=r_\infty$. Since $H\in C(\overline U\times\rn)$,
we arrive at $H(x_\infty, p_\infty)=+\infty$. This is impossible.   Hence
$H$ satisfies (A3) of Theorem 1.1.
\end{proof}

\subsection{Proof of the patching lemma}

In order to prove  Lemma \ref{l6.x2}, we need the following  approximation result.
We point out that this is the only place
of the whole paper that we need to use (A2), instead of (A2)$_{\rm{weak}}$.

\begin{lem}\label{l6.x3} Assume that $H(x,p)$ satisfies  (A1), (A2),  (A3), and (\ref{u-coercive}).
For any $\epsilon>0$, there exists $\lambda=\lambda(\epsilon)>0 $ such that
if $u,v\in C(\overline{U})$ satisfy $u=v$ on $\partial U$, and
\begin{eqnarray}\label{e6.x8}
\sup_{x\in U}\Big\{S^{+}u(x)+S^{+}v(x)\Big\}\leq \lambda,
\end{eqnarray}
then
\begin{equation}\label{e6.x8.1}
\max_{\overline{U}}|u-v|\leq\epsilon.
\end{equation}
\end{lem}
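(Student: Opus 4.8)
The plan is to use assumption (A2) to show that, once $\lambda$ is small, the sub-level sets $\{p:H(x,p)<\lambda\}$ shrink onto the hyperplane $P$ uniformly in $x\in\overline U$, so that displacements in the direction $e$ normal to $P$ cost almost nothing for the pseudo-distances $d_\lambda$. Since $U$ is bounded, every interior point is joined to $\partial U$ by a straight segment parallel to $e$; combining the cone inequality $w(y)-w(x)\le d_\lambda(x,y)$ (which holds because $\sup_U S^{+}w$ is small) with the smallness of $d_\lambda$ along such segments and with $u=v$ on $\partial U$ forces $u$ and $v$ to stay within $\epsilon$ of each other.

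First I would record that, by (A2), $H(x,0)=0$ forces $0\in P$, so $P=e^{\perp}$ for some unit vector $e$. The crucial step is the following assertion: \emph{for every $\eta>0$ there is $\lambda_\eta>0$ such that $L_\lambda(x,e)\le\eta$ and $L_\lambda(x,-e)\le\eta$ for all $x\in\overline U$ and all $0<\lambda\le\lambda_\eta$.} To prove it, note that by (A3) (together with the agreed monotonicity of $\lambda\mapsto R_\lambda$) the sets $\{p:H(x,p)\le\lambda\}$ are all contained in a fixed ball $B(0,R_\ast)$ for $x\in\overline U$ and $0<\lambda\le1$. If the assertion failed we would get $\lambda_n\downarrow0$, $x_n\in\overline U$, and $p_n\in B(0,R_\ast)$ with $H(x_n,p_n)\le\lambda_n$ and $|p_n\cdot e|>\eta$; after passing to a subsequence, $x_n\to x_\ast\in\overline U$ and $p_n\to p_\ast$ with $|p_\ast\cdot e|\ge\eta$, and lower semicontinuity of $H$ gives $H(x_\ast,p_\ast)\le\liminf_n H(x_n,p_n)=0$, so $p_\ast\in\{p:H(x_\ast,p)=0\}\subset P=e^{\perp}$, contradicting $|p_\ast\cdot e|\ge\eta$. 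This is exactly where the hyperplane condition (A2) (as opposed to (A2)$_\weak$) enters, and it is the heart of the argument; the rotating-Hamiltonian example $\widetilde H$ of the introduction shows it genuinely fails under (A2)$_\weak$.

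Next, I would use the slope characterization: any $w\in C(\overline U)$ with $\Lambda:=\sup_U S^{+}w<\infty$ lies in $\lip_\loc(U)$ and satisfies $w(y)-w(x)\le d_\Lambda(x,y)$ for all $x,y\in U$ (this can be quoted from \cite{cp}; alternatively, once $w\in\lip_\loc(U)$ is known, Lemma~\ref{l5.x4} gives $\|H(\cdot,Dw)\|_{L^{\infty}(U)}\le\Lambda$ and Lemma~\ref{l3.x9}(i) concludes). Fix $\eta>0$, let $\lambda=\lambda_\eta$ be as above, and suppose $\sup_U\{S^{+}u+S^{+}v\}\le\lambda$. For $x\in U$, since $U$ is bounded the ray $\theta\mapsto x+\theta e$ leaves $U$ at a first exit point $x_0=x+t_0e\in\partial U$ with $0<t_0\le\diam U$ and $x+\theta e\in U$ for $\theta\in[0,t_0)$; write $\sigma(\theta)=x+\theta e\in\overline U$. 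For $0\le\theta<\theta'<t_0$ the cone inequality (applied to the interior points $\sigma(\theta),\sigma(\theta')$) together with the crucial step gives
\[
|u(\sigma(\theta'))-u(\sigma(\theta))|\le d_\lambda(\sigma(\theta),\sigma(\theta'))+d_\lambda(\sigma(\theta'),\sigma(\theta))\le\int_{\theta}^{\theta'}\big(L_\lambda(\sigma(s),e)+L_\lambda(\sigma(s),-e)\big)\,ds\le 2\eta(\theta'-\theta),
\]
using that the segment $\sigma|_{[\theta,\theta']}$ and its reverse are admissible curves in $\overline U$ with velocities $\pm e$. Letting $\theta\downarrow0$, $\theta'\uparrow t_0$ and invoking continuity of $u$ on $\overline U$ yields $|u(x)-u(x_0)|\le2\eta\,\diam U$, and the same bound holds for $v$.

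Finally, since $u=v$ on $\partial U$, $u(x)-v(x)=(u(x)-u(x_0))+(v(x_0)-v(x))\le 4\eta\,\diam U$, and by symmetry $|u(x)-v(x)|\le4\eta\,\diam U$ for all $x\in U$; the bound extends to $\overline U$ by continuity, so choosing $\eta=\epsilon/(4\diam U)$ gives $\lambda=\lambda_\eta$ with \eqref{e6.x8.1}. I expect the main obstacle to be the crucial step — the uniform-in-$x$ collapse of $\{H(x,\cdot)<\lambda\}$ onto $P$ — since this is the precise point where (A2) is indispensable; a secondary point is the regularity input (continuous plus bounded slope implies locally Lipschitz and the cone inequality), which I would take from \cite{cp}.
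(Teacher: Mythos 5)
Your proposal is essentially the paper's own proof: the same reduction to the uniform smallness of $L_\lambda(\cdot,\pm q)$ (the paper writes $q$ for the unit normal of $P$ rather than $e$), the same contradiction argument via (A3) plus lower semicontinuity of $H$ to establish that smallness, and the same conclusion along the straight segment in the normal direction from any interior point to its first exit point on $\partial U$, using the cone inequality $w(y)-w(x)\le d_\lambda(x,y)$ and $u=v$ on $\partial U$.

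Two small side remarks. First, you apply the cone inequality on interior sub-segments $[\sigma(\theta),\sigma(\theta')]$ and then pass to the limit $\theta\downarrow 0,\ \theta'\uparrow t_0$ using continuity of $u$ on $\overline U$; the paper writes $u(y_x)-u(x)\le d_\lambda(x,y_x)$ directly with $y_x\in\partial U$, which technically needs the same limiting argument since Lemma~\ref{l3.x9}(i) is stated for $x,y\in U$ — your phrasing is the more careful of the two. Second, you correctly flag that passing from $\sup_U S^+w<\infty$ to the cone inequality presumes $w\in\lip_\loc(U)$ (the paper invokes Lemma~\ref{l5.x4}, which also assumes it); in the only place where Lemma~\ref{l6.x3} is used, in the proof of Lemma~\ref{l6.x2}, the functions $u$ and $u_\sigma$ are indeed in $\lip_\loc(U)$, so this is not an issue in practice, but it is a reasonable thing to note.
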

\begin{proof} From (A2) and (A3), there exists a hyperplane $P\subset\rn$
such that
$$\displaystyle\Sigma_0:=\bigcup_{x\in \overline U}\big\{p: H(x,p)=0\big\}\subset P$$
is a compact set containing $0$. Let $q\in\rn$ be a unit normal of $P$. It follows that
$p\cdot q=0$ for all  $p\in \Sigma_0$. Hence
\begin{equation}\label{e6.x9.0}
L_0(x, \pm q)=0, \ \forall\ x\in\overline U.
\end{equation}
We claim that for any $\epsilon>0$, there exists $\lz=\lambda(\epsilon)\in(0,1)$  such that
\begin{eqnarray}\label{e6.x9}
\max\Big\{L_{\lambda}(x,q), \ L_\lambda(x,-q)\Big\}<\frac{\epsilon}{2\diam(U)},\ \forall\ x\in \overline U.
\end{eqnarray}
Assume (\ref{e6.x9}) for the moment. For any $x\in U$, set $r_{x}:=\inf\big\{r>0\ |\ x+rq\notin U\big\}$.
Then $r_{x}\leq \diam(U)$ and $y_{x}:=x\pm r_xq\in \partial U.$
By \eqref{e6.x8} and Lemma \ref{l5.x4}, we have
$$
\sup_{z\in U}S^+u(z)=\big\|H(x,Du)\big\|_{L^{\infty}(U)}<\lambda.
$$
Hence, by Lemma \ref{l3.x9} we have
$$
u(y_x)-u(x) \leq d_\lz(x,y_x), \ \forall x\in U.$$
Let $\gz(t)=x+t(y_x-x)=x+tr_x q$, $t\in[0,\,1]$, be the line segment joining $x$ to $y_x$. Then $\gamma([0,1))\subset U$ and hence (\ref{e6.x9}) implies that
$$d_\lz(x,y_x)\le \int_0^1L_\lz(\gz(t),\gamma'(t))\,dt\le r_x \sup_{t\in[0,1]}L_{\lambda}(\gz(t),q)
\le r_x\frac{\epsilon}{2\diam(U)}\le\frac{\ez}2.$$
In particular, it holds that $u(y_x)-u(x)\leq\frac{\ez}2$ for $x\in U$. Switching the role of $x$ and $y_x$ and
applying (\ref{e6.x9}) with $-q$, we also obtain $u(x)-u(y_x)\le \frac{\epsilon}2$ for $x\in U$.
Hence $|u(x)-u(y_x)|\le \frac{\epsilon}2$ for $x\in U$.
Similarly, it also holds that $|v(x)-v(y_x)|\leq\frac{\ez}2$ for $x\in U$.
Since $u(y_x)=v(y_x)$ for $x\in U$, we obtain (\ref{e6.x8.1}).

We prove (\ref{e6.x9}) by contradiction. It suffices to show (\ref{e6.x9}) when the left hand side
is replaced by $L_\lambda(x,q)$.  Suppose it were false. Then there exists $\epsilon_0>0$ such that
for $\lambda_i\rightarrow 0^+$ there exists $x_i\in\overline U$ so that
$$L_{\lambda_i}(x_i, q)=\max_{\{H(x_i, p)\le\lambda_i\}} p\cdot q>\frac{\epsilon_0}{2\diam U}.$$
Thus there exists $p_i\in\rn$ such that
$$H(x_i, p_i)\le \lambda_i \ {\rm{and}}\ \ p_i\cdot q>\frac{\epsilon_0}{4\diam U}.$$
This and (A3) imply
$$\frac{\epsilon_0}{4\diam U}\le |p_i|\le R_{1}.$$
Assume $x_i\rightarrow \bar x$ and $p_i\rightarrow \bar p$ as $i\rightarrow\infty$.
Then $\bar x\in\overline U$, and
$$\frac{\epsilon_0}{4\diam U}\le |\bar p|\le R_1.$$
By the lower semicontinuity of $H$, we have that $H(\bar x, \bar p)=0$.
But, on the other hand, it holds
$$\bar p\cdot q>\frac{\epsilon_0}{4\diam U}.$$
This contradicts to (\ref{e6.x9.0}).  The proof is now complete. \end{proof}

\begin{proof}[Proof of Lemma \ref{l6.x2}]
For any $\sz>0,$ set
\begin{eqnarray*}
V_\sz:=\Big\{x\in U\ |\ S^{+}u(x)<\sz\Big\}.
\end{eqnarray*}
By the upper semicontinuity of $S^+u$, $V_\sz$ is open. For $x\in\overline{ V_\sz}$,
let ${\mathcal P}(x)$ denote the set of finite ordered lists
$[x_0=x, x_1, \ldots, x_N]\subset \overline {V_\sz}$  such that  \\
(i) $x_N\in\partial V_\sz$, and\\
(ii) for each $i=0,\ldots,N-1$,
there is a $d_\sz$-length minimizing geodesic curve $\gz_{x_i,x_{i+1}}$ joining $x_{i}$
to $x_{i+1}$,  with  $\gz_{x_i,x_{i+1}}\setminus\{x_i,x_{i+1}\}\subset V_\sz.$ Here
$N$ is a positive integer.

Define  $v_\sz : \overline{V}_{\sz} \rightarrow\rr$ by
\begin{eqnarray} \label{e6.x11}
v_\sz(x) = \sup \left\{ u(x_N) - \sum_{i = 0}^{N-1} d_{\sz}(x_{i},x_{i+1})\ \big|\ [x=x_0, \ldots, x_N]\in {\mathcal P}(x)\right\},
\ x\in\overline{V_\sigma}, \end{eqnarray}
and    $u_\sz : \overline U\rightarrow\rr$ by
\begin{eqnarray*}
u_{\sz}(x) = \begin{cases}
v_\sz(x) & \textrm{if}\; x \in \overline{V_\sz}, \\
u(x) & \textrm{if}\; x \in \overline{U} \setminus \overline V_{\sz}.
\end{cases}
\end{eqnarray*}
We want to show that $\displaystyle \{u_\sz\}_{\sz>0}$ satisfies all the desired properties.
It is divided into two claims.

\smallskip
\noindent\emph{Claim 1.
$v_\sz \leq u$ in $V_\sz$, $v_\sz = u$ on $\partial V_\sz $, and $v_\sz\in C(\overline{V_\sz}).$}

\smallskip
\noindent {\it Proof of Claim 1.} For $x\in V_\sigma$, let
$[x_0=x, ..., x_N]\in \cp(x).$  By Lemma \ref{l5.x4}, we have
\begin{eqnarray*}
\|H( \cdot,Du)\|_{L^{\infty}(V_{\sz})}=\sup_{x\in V_\sz}S^+u(x)\le\sz.
\end{eqnarray*}
Hence by Lemma \ref{l3.x9} and Remark \ref{r3.x10}, we  have
\begin{eqnarray*}
u(x_N) - u(x) = \sum_{i=0}^{N-1} (u(x_{i+1}) - u(x_i)) \leq \sum_{i = 0}^{N-1} d_\sz(x_{i}, x_{i+1}),
\end{eqnarray*}
this implies that $v_\sz(x) \leq u(x).$
When $x\in\partial V_\sz$,  since $[x]\in \cp(x)$, it follows that  $v_\sz (x) \geq u(x)$.
Hence $v_\sz (x)= u(x)$ on $\partial V_\sz$.

To show that $v_\sz$ is continuous on $\overline{V_\sz}$,
let $x,y\in V_\sz$  be such that
there exists a $d_\lz$-length minimizing geodesic curve $\gz_{x,y}\subset V_\sz$ joining $x$ to $y$,
which exists as long as $|x-y|$ is sufficiently small.
 For any $\epsilon>0,$ choose $[x_0=x,\ldots, x_N]\in \cp(x)$ such that
 \begin{equation}\label{e6.x12}
v_\sz(x)-\epsilon\le u(x_N) - \sum_{i = 0}^{N-1} d_\sz(x_{i},x_{i+1}).
\end{equation}
Then
 $[y_0,y_1, \ldots, y_{N+1}]:=[y,x,\ldots, x_N]\in \cp(y)$. Hence,
\begin{eqnarray*}
v_\sz(y)
 \ge u(x_N) -d_\sz(y,x) -\sum_{i = 0}^{N-1} d_\sz(x_{i},x_{i+1})
  \geq v_\sz(x)-\epsilon -d_\sz(y,x).
\end{eqnarray*}
Since $\epsilon>0$ is arbitrary, we conclude that
\begin{equation}\label{e6.x13}
v_\sz(x)-v_\sz(y)\leq d_\sz(y,x)\le R_\lz d_U(x,y),
\end{equation}
provided $x, y\in V_\sigma$ has sufficiently small $|x-y|$.
It follows from (\ref{e6.x13}) that $v_\sz$ is continuous on $V_\sz.$
Since $u\in C(\overline{U})$, $v_\sz=u$ on $\partial V_\sz$ and $v_\sz\le u$ on $\overline{V_\sz},$
the continuity of $v_\sz$ on $\overline{V_\sz} $ follows from the lower semicontinuity of
 $v_\sz$ on $\partial V_\sz$  when approaching from $V_\sz.$  In fact, for
 $y\in\partial V_\sz$, let $y_j\in V_\sz$ such that $y_j\rightarrow y$ as $j\rightarrow \infty.$
 Let $x_j\in \partial V_\sz$ such that
 $$d_\sz(y_j,x_j)=\min_{z\in \partial V_\sz} d_\sz(y_j,z).$$
Then $x_j\rightarrow y$. For each $j$, there exists a $d_\sz$-length minimizing geodesic curve $\gz_{y_j,x_j}$
joining $y_j$ to $x_j$, with $\gz_{y_j,x_j}\setminus\{x_j\}  \subset V_\sz$.
From \eqref{e6.x13} and $v_\sz=u$ on $\partial V_\sz,$ we have
$$u(x_j)=v_\sz(x_j)\le v_\sz(y_j)+d_\sz(y_j,x_j),$$
this yields  that $\displaystyle\liminf_{j\rightarrow\infty} v_\sz(y_j)\ge u(y) = v(y) $.

\medskip
\noindent \noindent\emph{Claim 2. For any $x \in V_\sz$ and sufficiently small $t>0,$
\begin{equation}\label{e6.x14}
T^tu_{\sz}(x) -u_{\sz}(x) = \sz t;
\end{equation}
and, for any $x\in U\backslash V_{\sz}$ and sufficiently small $t>0$,
\begin{equation}\label{e6.xx14}T^t u_{\sz}(x)=T^t u(x).\end{equation}}

\noindent{\it Proof of Claim 2.}
To see \eqref{e6.x14}, for $x \in V_\sz$ let $0 < r < \dist(x, \partial V_\sz)$ be
such that $B(x,r)\subset B_\sz (x, R_\sz r)\subset V_\sz$.
  By Lemma \ref{l4.x2}, there exists  $t_0 = t_0(\osc_U u_{\sz}, r)>0$
  such that if $0<t<t_0$ then
\begin{eqnarray*}
T^t u_{\sz}(x) = \sup_{y\in  B(x,r)} \Big\{v_\sz(y) - \cl_t\left(x,y \right) \Big\}.
\end{eqnarray*}
For each $y\in B(x,r)$,  there is a $d_\lz$-length minimizing geodesic curve $\gz_{x,y}\subset V_\sz$ joining $x$
to $y$. Hence \eqref{e6.x13} holds for any $y\in B(x,r)$. This, combined with Lemma \ref{l3.x2} (ii), implies that
\begin{eqnarray}\label{e6.x15}
T^t u_{\sz}(x)%&&=\sup_{y\in  B(x,r)}\left[ v_\sz(y) - L_t\left(x,y\right)\right]\\
&&\leq \sup_{y\in  B(x,r)} \Big\{ d_\sz(x,y) + v_\sz(x) - \cl_t \left(x,y \right)\Big\}
 \leq v_\sz(x) + \sz t
 = u_\sz(x)+\sz t .
\end{eqnarray}
On the other hand, we can choose $t>0$ so small that Theorem \ref{t3.x6} (ii) and (iii)
imply that $\partial E^{<t}_{ \sz} (x)   \subset B(x,r)$.
Fix a small $\epsilon>0,$ and select $[x_0=x,\ldots,x_{N}]\in\cp{(x)}$
such that \eqref{e6.x12} holds.
It is clear that there is a point $y_*\in \gz_{x_{j_*},x_{j_*+1}}\cap\partial E_{\sz}^{<t}(x) $ for some $j_*\in \{0,1,\ldots,N-1\}.$
Then $[y_*,x_{j_*+1},\ldots, x_{N}]\in\cp{(y_*)}$ and it follows that
$$
v_{\sz}(y_*)\geq u(x_{N})-\sum_{i=j_*+1}^{N-1}d_{\sz}(x_{i},x_{i+1})-d_{\sz}(y_*,x_{j_*+1}).
$$
From this inequality and \eqref{e6.x12}, we deduce
$$
v_{\sz}(y_*)-v_{\sz}(x)+\epsilon\geq\sum_{i=0}^{j_*}d_{\sz}(x_{i},x_{i+1})-d_{\sz}(y_*,x_{j_*+1}).
$$
Since $y_*\in \gamma_{x_{j_*}, x_{j_*+1}}$, which is $d_\sz$-length minimizing,  it follows
that
$$d_{\sz}(x_{j_*},x_{j_*+1})-d_{\sz}(y_*,x_{j_*+1})=d_{\sz}(x_{j_*},y_*).$$
Thus we have
\begin{eqnarray*}
v_{\sz}(y_*)-v_{\sz}(x)+\epsilon&&\ge
d_{\sz}(x_{j_*},y_*)+d_{\sz}(x_{j_*-1},x_{j_*})+\cdots+d_{\sz}(x,x_1) \geq d_{\sz}(x,y_*).
\end{eqnarray*}
From Theorem \ref{t3.x6}, we have that $y_*\in \partial E_{\sz}^{<t}(x) \subseteq E_{\sz}^t(x) $.
Hence we obtain
\begin{eqnarray}\label{e6.x16}
T^t u_{\sz}(x)-u_{\sz}(x)+\epsilon&&\geq v_{\sz}(y_*)-v_{\sz}(x)-\cl_t(x,y_*)+\epsilon\nonumber\\
&&\geq d_{\sz}(x,y_*)-\cl_t(x,y_*)=\sz t.
\end{eqnarray}
Since $\epsilon>0$ is arbitrary, \eqref{e6.x14} follows from (\ref{e6.x15}) and (\ref{e6.x16}).

For \eqref{e6.xx14},  since $T^t u\geq T^t u_{\sz}$ follows from $u\geq u_{\sz}$,
it suffices to prove $T^t u(x)\le T^t u_{\sz}(x)$ for $x\in U\backslash V_{\sz}$.
For $x\in U\backslash V_{\sz}$, since $u$ enjoys the pointwise convexity criteria,
the map $t\mapsto T^t u(x)$ is convex on the interval $[0,\delta(x)] $ for some  $\delta(x)>0$.
Choose $0<r<\dist(x,\partial U)$ and  $0<t<\min\left\{\eta_0\left(\osc_{U}u_{\sz},r\right),\delta(x)\right\}.$
There exist $y\in \overline{B(x,r)}$ and $y_i\in B(x,r)$ such that $y_i\rightarrow y$ as $i\rightarrow\infty$, and
$$T^t u(x)=\lim_{y_i\to y}\Big\{u(y_i)-\cl_t(x,y_i)\Big\}.$$
By the increasing slope estimate \eqref{e5.x7}, we have
$$S^{+}u(y)\geq\frac{T^t u(x)-u(x)}{t}\geq S^{+} u(x)\ge\sigma.$$
This implies that $y\in U\backslash V_{\sz}.$ Since $u_{\sz}(y)=u(y)$ and both $u_\sz$ and $u$
are continuous at $y$,  we have
\begin{eqnarray*}
T^t u(x)&&=\lim_{y_i\to y}\Big\{u(y_i)-\cl_t(x,y_i)\Big\}\\
&&\le \limsup_{y_i\to y}\Big\{u_\sz(y_i)-\cl_t(x,y_i)\Big\}+
\lim_{y_i\to y}\Big\{u_\sz(y_i)-u ( y_i )\Big\} \\
&&= \limsup_{y_i\to y}\Big\{u_\sz(y_i)-\cl_t(x,y_i)\Big\}
\leq T^t u_{\sz}(x).
\end{eqnarray*}
This clearly implies (\ref{e6.xx14}).

For $ x\in V_\sz$,  \eqref{e6.x14} implies that $S^+u_\sz=\sz$;
while for $x\in U\setminus V_\sz$, (\ref{e6.xx14}) implies that $S^{+}u_{\sz}(x)=S^+u(x)\ge\sz$. This yields (iv).
From (\ref{e6.x13}) and Lemma \ref{l3.x9} (ii), we have that $v_\sigma\in {\rm{Lip}}_{\rm{loc}}(V_\sigma)$ and
$$H(x, Dv_\sigma(x))\le \sigma \ {\rm{for\ a.e.}}\ x\in V_\sigma.$$
By (A3), this implies that $|Dv_\sigma(x)|\le R_\sigma$ for a.e. $x\in V_\sigma$. This, combined with $u\in
{\rm{Lip}}_{\rm{loc}}(U)$, implies that $u_\sigma\in {\rm{Lip}}_{\rm{loc}}(U)$. This together with Claim 1
then implies (i).

Since $S^+u(x)\le \sigma$ and $S^+u_\sigma(x)=\sigma$ for $x\in V_\sigma$,
Lemma \ref{l6.x3} implies that for every $\ez>0$ there exists $\sz=\sigma(\epsilon)>0$
such that $$\big\|u-u_\sz\big\|_{L^\fz(U)}= \big\|u-u_\sz\big\|_{L^\fz(V_\sz)}<\ez.$$
In particular,  $u_\sz\to u$ uniformly in $U$, as $\sz\to0$.  This gives (ii).

Finally, for every $x\in U,$ the convexity of the map $t\mapsto T^t u_{\sz}(x)$ follows
from \eqref{e6.x14} when $ x\in V_\sz$; and
\eqref{e6.xx14} when $x\in U\setminus V_\sz$.
Moreover, it follows from Claim 2 and the upper semicontinuity of $S^+u$ in $U$
that $S^+u_\sigma$ is also upper semicontinuous.
Therefore $u_\sz$ satisfies the pointwise convexity criteria (iii). The proof is now complete.
\end{proof}

%\smallskip
\noindent{\bf Acknowledgement}. The first and third authors are partially supported by a grant from NSF of China (No. 11522102).
The second author is partially supported by NSF grant 1522869.

\noindent Qianyun Miao and Yuan Zhou

\noindent
Department of Mathematics, Beijing University of Aeronautics and Astronautics, Beijing 100191, P. R. China

\noindent{\it E-mail }:  \texttt{miaoqianyun@buaa.edu.cn} and \texttt{yuanzhou@buaa.edu.cn}

\medskip

\noindent Changyou Wang

\noindent  Departmet of Mathematics, Purdue University,
150 N. University Street, West Lafayette, IN 47907, USA.

\noindent {\it E-mail}: \texttt{wang2482@purdue.edu }  %@@!!@@!!

%\bigskip

%\noindent  Yuan Zhou

%\noindent
%Department of Mathematics, Beijing University of Aeronautics and Astronautics, Beijing 100191, P. R. China

%\noindent{\it E-mail }:  \texttt{yuanzhou@buaa.edu.cn}

%\end{CJK*}

\end{document}